\Crefname{paragraph}{Section}{Sections}
\newcommand\rouge[1]{{\bf\color{red} #1}}
\crefname{theo}{theorem}{theorems}
\providecommand{\keywords}[1]{\noindent {\textit{Keywords:}} #1}
\theoremstyle{plain} 
\newtheorem{proposition}{Proposition}[section] 
\newtheorem{theo}[proposition]{Theorem}
\newtheorem{lemma}[proposition]{Lemma}
\newtheorem{rmk}[proposition]{Remark}
\theoremstyle{definition}
\newtheorem{defi}[proposition]{Definition}
\DeclareMathOperator*{\esssup}{ess\,sup}
\def\dx{\textnormal{d}x}
\def\dt{\textnormal{d}t}
\def\d{\textnormal{d}}
\def\E{{\mathbb{E}}}
\def\dom{\mathcal{O}}
\def\F{\mathcal{F}}
\def\P{\mathbb{P}}
\def\fil{\mathbb{F}}
\def\bigmid{\;{\Big |}\;}
\newcommand{\norme}[1]{\left\lVert#1\right\rVert}
\newcommand{\dive}[1]{\mathrm{div}}
\newcommand{\ov}[1]{\overline{#1}}
\def\dx{\,\textnormal{d}x}
\def\dt{\textnormal{d}t}
\def\d{\textnormal{d}}
\def\E{{\mathbb{E}}}
\def\R{{\mathbb{R}}}
\def\dom{\mathcal{O}}
\def\F{\mathcal{F}}
\let\original@addcontentsline\addcontentsline
\newcommand{\dummy@addcontentsline}[3]{}
\newcommand{\DeactivateToc}{\let\addcontentsline\dummy@addcontentsline}
\newcommand{\ActivateToc}{\let\addcontentsline\original@addcontentsline}
\begin{document}

\title{$L^{p}$-estimates, local well-posedness and controllability for linear and semilinear backward SPDEs}

\author{V\'ictor Hern\'andez-Santamar\'ia\thanks{The work of V. Hern\'andez-Santamar\'ia is supported by the program ``Estancias Posdoctorales por México para la Formación y Consolidación de las y los Investigadores por México'' of SECIHTI (Mexico). He also received support from Project CBF2023-2024-116 of SECIHTI and by UNAM-DGAPA-PAPIIT grants IN117525, IA100324 and IN102925 (Mexico).} \and  K\'evin Le Balc'h\thanks{The work of K. Le Balc'h is partially supported by the Project TRECOS ANR-20-CE40-0009 funded by the ANR (2021–2024).} \and Liliana Peralta\thanks{L. Peralta has received support from UNAM-DGAPA-PAPIIT Grants IA100324 and IA103826.}}

\maketitle

\begin{abstract}
In this paper, we study linear backward parabolic SPDEs {in bounded domains} and present new a priori estimates for their weak solutions. Inspired by the seminal work of Y. Hu, J. Ma and J. Yong from 2002 on strong solutions, we establish $L^p$-estimates requiring minimal assumptions on the regularity of the coefficients, the terminal data, and the external force. {Our approach relies on direct, constructive, and quantitative arguments, adapted from known methods in the theory of SPDEs to this setting.  In particular, we develop a new Itô’s formula for the $L^p$-norm of the backward solution, tailored to this setting and extending the classical result in the $L^2$-framework}. This formula is then used  to improve further the regularity of the first component of the solution up to $L^\infty$. {We also present two applications: a local existence result for a semilinear equation without imposing any growth condition on the nonlinear term, and a novel local controllability result for semilinear backward SPDEs that partially resolves an open problem in the field.
}
\end{abstract}

\keywords{Backward SPDEs, weak solutions, It\^{o}'s formula, controllability, Banach fixed-point.}

\small
\tableofcontents
\normalsize

\section{Introduction}

\subsection{Notation and main results}

Backward stochastic partial differential equations (BSPDEs, for short) arise in many applications of probability theory and stochastic processes. In the literature, BSPDEs can be found in various contexts, including control under incomplete information (see \cite{Ben83,Tan98}), as adjoint equations in optimal control (see e.g. \cite{NN90,HP91,Zho93}), in controllability problems (see e.g. \cite{BRT03,TZ09,HSLBP23}), in mathematical finance for the formulation of the stochastic Feynman-Kac formula and the Black-Scholes equation (see \cite{MY97,MY99}), among other applications.

Let $(\Omega,\F,\P)$ be a complete probability space on which is defined a one-dimensional standard Brownian motion $W=\{W(t):t\geq 0\}$ and let $\{\F_t\}_{t\geq 0}$ be the natural filtration generated by $W$, augmented by all the $\P$-null sets in $\F$. For any given (deterministic) time $T>0$, let us consider the following linear BSPDE
\begin{equation}
\label{eq:intro}
\begin{cases}
\d{y}=-\left(\Delta y + \alpha y + \beta Y + F\right)\d{t}+Y\d{W}(t) &\text{in } (0,T)\times\dom, \\
y=0 &\text{on } (0,T)\times\partial\dom, \\
y(T)=y_T &\text{in }\dom,
\end{cases}
\end{equation}
where $\mathcal O\subset\mathbb R^d$ ($d\geq 1$) is an open and bounded set with a $C^2$ boundary $\partial \dom$. In \eqref{eq:intro}, the coefficients $\alpha$ and $\beta$, the forcing term $F$ and the terminal datum $y_T$ are suitable random fields verifying appropriate measurability and regularity conditions. 

A solution to \eqref{eq:intro} consists of a pair of random fields $(y,Y)$ that are $\{\F_t\}_{t\geq 0}$-adapted and satisfy \eqref{eq:intro} in a specified sense. At this point, it is important to note a fundamental difference from standard SPDEs: the noise term in a BSPDE like \eqref{eq:intro} arises from the intrinsic randomness in the coefficients and terminal data---making it endogenous---and comes from martingale representation theorems. 

A classical question for BSPDEs consists in understanding the notion of the solution (for instance strong, mild, or weak in the PDE sense) and determining the best regularity available for the given data of the problem. This issue has been studied in the literature in different contexts and frameworks. In \cite{HP91,LvN19}, using semigroup theory, the authors establish the existence and uniqueness of mild solutions for abstract backward evolution equations in the $L^2$ and $L^p$ settings, respectively. In \cite{HMY02,Dok12,DT12}, the problem of existence and regularity of strong solutions (i.e., solutions that can be evaluated point-wisely in the spatial variable) is studied in bounded and/or unbounded domains. In the context of weak solutions (i.e., the PDE satisfies a scalar product equation), the works \cite{DQT12,DTZ13} tackle the existence and regularity of solutions posed in the whole space within the $L^p$-setting and in higher order Sobolev spaces $W^{m,p}$ ($m\geq 1$, $p\geq 2$), respectively. Finally, existence and regularity of solutions in H\"older spaces are established in \cite{TW16}.

In this direction, the first goal of this paper is to establish $L^p$- and $L^\infty$-estimates for weak solutions to \eqref{eq:intro} (see \Cref{def:weak} below) without imposing additional regularity and/or differentiability assumptions on the coefficients, the terminal data or the source term. Our aim is to use natural hypotheses corresponding to the desired function space: for instance, obtaining $L^p$ estimates requires $L^p$ terminal datum and source term, and similarly, $L^\infty$ solutions are obtained with $L^\infty$ data.

To make this precise, we begin by introducing the following notations.  In what follows, we denote $\{\F_t\}_{t\geq 0}$ by $\fil$ unless we want to emphasize a precise $\mathcal F_t$. Let $(X,\|\cdot\|_X)$ be a Banach space, for any $p\in[2,+\infty)$ and any $s\in\{0,T\}$, we denote by $L^p_{\mathcal F_s}(\Omega;X)$ the set of all $\F_s$-measurable $X$-valued random variables $\xi:\Omega\to X$ such that $\E(|\xi|_{X}^p)<+\infty$. For any $p,q\in[2,\infty)$ we define the spaces
\begin{equation}\label{LpLq}
\begin{split}
L_{\fil}^p(\Omega;L^q(0,T;X)):=\Big\{\psi & :\Omega\times[0,T]\to X \mid \, \psi(\cdot) \text{ is an }  \fil\text{-adapted process }\\ &\textnormal{ on }[0,T]\text{ and } \E\left[\Big(\int_0^T\|\psi(t)\|^q_{X}\dt\Big)^{\frac pq}\right]<+\infty\Big\},
\end{split}
\end{equation}
\begin{equation}\label{LqLp}
\begin{split}
L_{\fil}^q(0,T;L^p(\Omega;X)):=\Big\{\psi & :\Omega\times[0,T]\to X \mid \, \psi(\cdot) \text{ is an }  \fil\text{-adapted process }\\ &\textnormal{ on }[0,T]\text{ and } \int_0^T\left[\E\left(\|\psi(t)\|^p_{X}\right)^{\frac qp}\right]\dt <+\infty\Big\},
\end{split}
\end{equation}
endowed with their natural norms. In a clear and analogous way, we define the spaces \eqref{LpLq} and \eqref{LqLp} when $p$ and/or $q$ equal to $\infty$. When $p=q$ with $p\in[2,+\infty)$, from Fubini's theorem we have that $L_{\fil}^p(\Omega;L^p(0,T;X))=L_{\fil}^p(0,T;L^p(\Omega;X))$ and the norms coincide and for simplicity we just write $L_{\fil}^p(0,T;X)$. When $p=q=\infty$, we also have that $L_{\fil}^{\infty}(\Omega;L^{\infty}(0,T;X))=L_{\fil}^{\infty}(0,T;L^{\infty}(\Omega;X))$ and the norms coincide. Lastly, by $L_{\fil}^p(\Omega; C([0,T];X))$ we denote the Banach space consisting of all $X$-valued $\fil$-adapted continuous processes $\psi(\cdot)$ such that $\mathbb{E}\left(\norme{\psi}_{C([0,T];X)}^p \right) < +\infty$, also equipped with the canonical norm.

In the remainder of this document, we will make the following instrumental assumption on the coefficients of system \eqref{eq:intro}

\begin{enumerate}[label={\textnormal{\bf (H)}}]
\item \label{Hy} $\alpha,\beta\in L^\infty_{\fil}(0,T;L^\infty(\dom))$.
\end{enumerate}

We recall the following notion of solution for \eqref{eq:intro}.
\begin{defi}\label{def:weak} A pair of random fields $(y,Y)$ is called a weak solution to \eqref{eq:intro} if
\begin{enumerate}
\item[1)] $(y,Y)$ is $L^2(\mathcal O)\times L^2(\mathcal O)$-valued and $\mathcal F_t$-measurable for each $t\in[0,T]$,
\item[2)] $(y,Y)\in \left[L^2_{\fil}(\Omega;C([0,T]);L^2(\dom))\bigcap L^2_{\fil}(0,T;H_0^1(\dom))\right]\times L^2_{\fil}(0,T;L^2(\dom))$, and
\item[3)] for any $t\in[0,T]$ and $\phi\in H_0^1(\mathcal O)$ it holds
\begin{align*}
(y(t),\phi)_{L^2(\mathcal O)}&=(y_T,\phi)_{L^2(\mathcal O)}-\int_{t}^{T}(\nabla y,\nabla \phi)_{L^2(\mathcal O)}\d{s}\\
&\quad +\int_{t}^{T}(\alpha y+\beta Y+F,\phi)_{L^2(\mathcal O)}\d{s}-\int_{t}^{T}(Y,\phi)_{L^2(\mathcal O)}\d{W}(s), \quad \textnormal{a.s.}
\end{align*}
\end{enumerate}
\end{defi}

The following result ensures the existence and uniqueness of such weak solutions.

\begin{theo}\label{thm:l2_sol}
Let $\alpha,\beta$ be given coefficients satisfying \ref{Hy}. If $y_T\in L^2_{\mathcal F_T}(\Omega; L^2(\mathcal O))$ and $F\in L^2_{\fil}(0,T;L^2(\mathcal O))$, then \eqref{eq:intro} admits a unique weak solution in the sense of \Cref{def:weak}. Moreover, the following energy estimate holds
\begin{align}\notag
&\E\left(\sup_{t\in[0,T]}\|y(t)\|^2_{L^2(\dom)}\right)+\E\left(\int_0^T\int_{\dom}|\nabla y|^2\dx\dt\right) +\E\left(\int_0^T\int_{\dom}|Y|^2\dx\dt\right) \\ \label{thm:l2_reg}
&\quad \leq C\E\left(\int_{\dom}|y_T|^2\dx+\int_0^T\int_{\dom}|F|^2\dx \dt\right),
\end{align}
for a positive constant $C>0$ independent of $y_T$ and $F$.
\end{theo}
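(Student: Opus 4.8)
The plan is to establish existence and uniqueness together with the energy estimate \eqref{thm:l2_reg} by the standard Galerkin/duality approach for backward SPDEs, which in the $L^2$-framework is by now classical (see \cite{HP91,HMY02}); since the present statement imposes only the minimal assumptions \ref{Hy}, I would emphasize the a priori estimate, from which existence follows by a density/approximation argument and uniqueness by linearity.

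First I would reduce uniqueness to the a priori estimate: if $(y^1,Y^1)$ and $(y^2,Y^2)$ are two weak solutions with the same data, then $(y^1-y^2,Y^1-Y^2)$ is a weak solution of \eqref{eq:intro} with $y_T=0$ and $F=0$, so estimate \eqref{thm:l2_reg} forces $y^1=y^2$ and $Y^1=Y^2$. For existence, I would first treat the case of smooth data: approximate $y_T\in L^2_{\mathcal F_T}(\Omega;L^2(\dom))$ and $F\in L^2_{\fil}(0,T;L^2(\dom))$ by sequences $y_T^k$, $F^k$ that are, say, bounded and for which the solution $(y^k,Y^k)$ is known to exist (e.g. via the semigroup/mild-solution theory of \cite{HP91} together with a martingale representation argument, or via a finite-dimensional spectral Galerkin truncation in the $\phi$-variable using the Dirichlet eigenbasis of $-\Delta$ on $\dom$, combined with the martingale representation theorem to produce the $Y$-component). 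Then I would show that $(y^k,Y^k)$ is Cauchy in the space in item 2) of \Cref{def:weak} by applying the a priori estimate to the differences, and pass to the limit in the weak formulation in item 3).

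The core is therefore the a priori estimate, which I would obtain by applying the classical It\^o formula for $\|y(t)\|_{L^2(\dom)}^2$. Testing the equation against $y$ itself (legitimate here since $y\in L^2_{\fil}(0,T;H_0^1(\dom))$ so $\Delta y$ pairs with $y$) gives, for $t\in[0,T]$,
\begin{align*}
\|y(t)\|_{L^2(\dom)}^2 + 2\int_t^T\|\nabla y(s)\|_{L^2(\dom)}^2\,\d s + \int_t^T\|Y(s)\|_{L^2(\dom)}^2\,\d s
&= \|y_T\|_{L^2(\dom)}^2 + 2\int_t^T (\alpha y+\beta Y+F,y)_{L^2(\dom)}\,\d s \\
&\quad - 2\int_t^T (Y,y)_{L^2(\dom)}\,\d W(s).
\end{align*}
The sign of the dissipative term $-2\int_t^T\|\nabla y\|^2$ and the exact appearance of $+\int_t^T\|Y\|^2$ (the It\^o correction from the martingale part) are what make the estimate work. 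I would then bound the drift cross-terms using \ref{Hy}: $2|(\alpha y,y)|\le 2\|\alpha\|_\infty\|y\|^2$, $2|(\beta Y,y)|\le \tfrac12\|Y\|^2 + 2\|\beta\|_\infty^2\|y\|^2$ (Young, absorbing half of the $\|Y\|^2$ term into the left-hand side), and $2|(F,y)|\le \|y\|^2 + \|F\|^2$. Taking expectations kills the stochastic integral (it is a genuine martingale once one has the a priori $L^2_{\fil}$ bound, established first by a localization/stopping-time argument), and Gr\"onwall's lemma in the backward variable yields $\E\|y(t)\|^2 + \E\int_0^T\|\nabla y\|^2 + \E\int_0^T\|Y\|^2 \le C\,\E(\|y_T\|^2 + \int_0^T\|F\|^2)$. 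For the $\sup_t$ bound inside the expectation, I would go back to the It\^o identity, take the supremum over $t$, and estimate the martingale term via the Burkholder–Davis–Gundy inequality: $\E\sup_t|\int_t^T (Y,y)\,\d W|\le C\,\E(\int_0^T\|Y\|^2\|y\|^2\,\d s)^{1/2}\le \tfrac12\E\sup_t\|y(t)\|^2 + C\,\E\int_0^T\|Y\|^2\,\d s$, the last factor being already controlled.

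The main obstacle is the usual one for backward equations: the stochastic integrals $\int_t^T(Y,y)\,\d W(s)$ are a priori only local martingales, so one cannot simply take expectations until the integrability of $Y$ in $L^2_{\fil}(0,T;L^2(\dom))$ has been secured. I would handle this by first deriving the estimate on the Galerkin/regularized level, where everything is finite-dimensional and honest, and only then passing to the limit; alternatively one localizes with stopping times $\tau_N$, derives the estimate on $[t\wedge\tau_N, T]$, and sends $N\to\infty$ by monotone convergence. Beyond this, the argument is routine: the choice of the Dirichlet eigenbasis makes the Galerkin projections commute with $\Delta$, the coefficients enter only through their $L^\infty$-norms via \ref{Hy}, and the constant $C$ depends only on $T$, $\|\alpha\|_\infty$, $\|\beta\|_\infty$ and the BDG constant, hence is independent of $y_T$ and $F$ as claimed.
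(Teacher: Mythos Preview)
Your proposal is correct and follows the standard route the paper itself points to: the paper does not actually prove \Cref{thm:l2_sol} but simply remarks that it is well known and can be obtained either by a duality analysis \`a la \cite{zhou92} or by a Galerkin method as in \cite{Gao18,SY09}. Your outline---It\^o's formula for $\|y(t)\|_{L^2}^2$, absorption of the $\beta Y$ cross term, backward Gr\"onwall, then BDG for the supremum, with existence built on a Galerkin truncation and uniqueness by linearity---is exactly the constructive approach the paper cites, so there is nothing to compare.
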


This theorem is well-known in the literature and the proof can be derived in two different ways: by a duality analysis similar to \cite{zhou92} or by a more direct and constructive approach relying on Galerkin method (see, e.g., \cite[Proposition 2.1]{Gao18} or \cite[Section 3]{SY09}).

Our first main result says that the integrability of the weak solution $(y,Y)$ can be improved by taking more regular data. More precisely, we have the following. 

\begin{theo}\label{thm:ito}
Let $(y,Y)$ be a weak solution to \eqref{eq:intro}. Assume that $y_T\in L^p_{\mathcal F_T}(\Omega;L^p(\dom))$ and $F\in L^{p}_{\fil}(0,T;L^p(\mathcal O))$ for some $p\in[2,+\infty)$. Then, $$(y,Y)\in L^p_{\fil}(\Omega;C([0,T];L^p(\dom)))\times L^p_{\fil}(\Omega;L^2(0,T;L^2(\dom)))$$ and satisfies
\begin{align}\notag
&\E\left(\sup_{t\in[0,T]}\|y(t)\|^p_{L^p(\dom)}\right)+ \E\left[\left(\int_0^{T}\|Y(t)\|_{L^2(\dom)}^2\dt\right)^{p/2}\right] \\ \label{est_lp}
&\qquad \leq Ce^{C T}\E\left(\int_{\dom}|y_T|^p\dx+\int_0^T\int_{\dom}|F|^p\dx \dt\right),
\end{align}
for a constant $C>0$ only depending on $\dom$, $\alpha$, $\beta$ and $p$.
\end{theo}


Our second result says that we can go up to $L^\infty$ for the first component $y$ of the solution $(y,Y)$ to \eqref{eq:intro}. In more detail, we have the following.

\begin{theo}\label{thm:infinity}
Let $(y,Y)$ be a weak solution to \eqref{eq:intro}. Assume that $y_T\in L^\infty_{\mathcal F_T}(\Omega;L^\infty(\dom))$ and $F\in L^{\infty}_{\fil}(0,T;L^\infty(\mathcal O))$. Then the process $y$ taken from the solution $(y,Y)$ to \eqref{eq:intro} belongs to $L^\infty_{\fil}(0,T;L^\infty(\dom))$ and satisfies
\begin{equation}\label{eq:linf_est}
\esssup_{(\omega,t)\in\Omega\times[0,T]}\|y(t)\|_{L^\infty(\dom)} \leq \exp (C T ) \left( \norme{y_T}_{L^{\infty}_{\mathcal F_T}(\Omega;L^{\infty}(\mathcal O))} + \norme{F}_{L^{\infty}_{\fil}(0,T;L^{\infty}(\dom))}\right),
\end{equation}
for some $C>0$ only depending on $\dom$, $\alpha$ and $\beta$.
\end{theo}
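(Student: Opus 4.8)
The plan is to obtain the $L^\infty$-bound as the limit of the $L^p$-bounds from \Cref{thm:ito} as $p\to+\infty$, exploiting the fact that the exponential constant $e^{CT}$ there can be taken with $C$ independent of $p$ provided one is slightly more careful in tracking the dependence on $p$ when integrating the $L^p$-It\^o formula. More precisely, I would first revisit the estimate in \Cref{thm:ito} and show that, with a suitable choice of how one handles the zero-order terms $\alpha y$ and $\beta Y$ (using Young's inequality to absorb the $\beta Y$ contribution into the good quadratic term coming from the It\^o correction, and bounding the $\alpha y$ term by $\|\alpha\|_{L^\infty} \|y\|_{L^p}^p$), one arrives at a Gr\"onwall-type inequality of the form
\begin{equation*}
\E\left(\sup_{s\in[t,T]}\|y(s)\|_{L^p(\dom)}^p\right) \leq C_1 e^{C_2 (T-t)}\left(\E\|y_T\|_{L^p(\dom)}^p + \E\int_t^T \|F(s)\|_{L^p(\dom)}^p\,\d s\right),
\end{equation*}
where $C_2$ depends only on $\|\alpha\|_{L^\infty}$ and $\|\beta\|_{L^\infty}$ (not on $p$), and $C_1$ is at worst polynomially bounded in $p$ — in fact one can arrange $C_1$ to be an absolute constant if one is willing to drop the supremum inside and only estimate $\E\|y(t)\|_{L^p}^p$ pointwise in $t$, which is all that is needed here.

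Next I would take $p$-th roots: the displayed inequality gives, for a.e.\ $(\omega,t)$,
\begin{equation*}
\left(\E\|y(t)\|_{L^p(\dom)}^p\right)^{1/p} \leq C_1^{1/p} e^{C_2(T-t)/p}\left(\left(\E\|y_T\|_{L^p(\dom)}^p\right)^{1/p} + \left(\E\int_0^T \|F(s)\|_{L^p(\dom)}^p \d s\right)^{1/p}\right).
\end{equation*}
Wait — the exponent on $e^{C_2 T}$ should be $1$, not $1/p$; the correct bound after taking roots is $\left(\E\|y(t)\|_{L^p}^p\right)^{1/p} \leq C_1^{1/p} e^{C_2 T}\big(\|y_T\|_{L^p_{\mathcal F_T}(\Omega;L^p)} + \|F\|_{L^p_\fil(0,T;L^p)}\big)$, because the Gr\"onwall constant multiplies the $p$-th power and so survives the root with exponent $1$. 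This is exactly the shape of \eqref{eq:linf_est}. Now I invoke the standard fact that for a fixed measurable function $g$ on a finite measure space, $\|g\|_{L^p} \to \|g\|_{L^\infty}$ as $p\to\infty$ (more precisely $\liminf_{p\to\infty}\|g\|_{L^p} \geq \|g\|_{L^\infty}$, with the measure of $\Omega\times[0,T]\times\dom$ finite so the other direction is automatic after normalization): applying this to $g = y$ on $\Omega\times[0,T]\times\dom$ on the left and using $\|y_T\|_{L^p_{\mathcal F_T}(\Omega;L^p(\dom))} \to \|y_T\|_{L^\infty_{\mathcal F_T}(\Omega;L^\infty(\dom))}$ and likewise for $F$ on the right, together with $C_1^{1/p}\to 1$, yields
\begin{equation*}
\esssup_{(\omega,t,x)} |y(t,x)| \leq e^{C_2 T}\left(\|y_T\|_{L^\infty_{\mathcal F_T}(\Omega;L^\infty(\dom))} + \|F\|_{L^\infty_\fil(0,T;L^\infty(\dom))}\right),
\end{equation*}
which is \eqref{eq:linf_est} with $C = C_2$. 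One subtlety to address cleanly: the hypotheses only give $y_T\in L^\infty$ and $F\in L^\infty$, but on a bounded domain and finite time horizon these embed into every $L^p$, so \Cref{thm:ito} applies for all $p\in[2,\infty)$ and the right-hand sides are finite and converge as claimed; and one should note that $\sup_p \|y_T\|_{L^p} \le |\dom|^{1/p}\|y_T\|_{L^\infty}$ is bounded, so there is no blow-up.

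The main obstacle I anticipate is \emph{the $p$-independence of the exponential rate $C_2$}. In the proof of \Cref{thm:ito} the It\^o formula for $\|y\|_{L^p}^p$ produces a term like $\frac{p(p-1)}{2}\int |y|^{p-2}|Y|^2$ from the quadratic variation, and the stochastic integral and the cross term $p\int |y|^{p-2} y \cdot \beta Y$ must be controlled; the naive Young's inequality splits this as $\leq \frac{p(p-1)}{4}\int|y|^{p-2}|Y|^2 + C_p \int |y|^{p-2} y^2 \beta^2$ with $C_p \sim \frac{p}{p-1}\|\beta\|_\infty^2$, which is in fact \emph{bounded} in $p$ — so the rate stays uniform. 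One has to make sure this is genuinely the case and that no factor of $p$ sneaks into the exponent (as opposed to into the multiplicative constant $C_1$, where it is harmless after the root). A second, more minor point is justifying the interchange of limit and essential supremum, i.e.\ that the pointwise bound $\|y(t)\|_{L^p}^{1/p}\cdot(\text{stuff}) \le \text{RHS}$ for a.e.\ $t$ (for each fixed $p$) upgrades to a bound on $\esssup_t \|y(t)\|_{L^\infty}$; this follows by first noting the bound holds for a.e.\ $(\omega,t,x)$ with a null set that can be taken independent of $p$ along a countable sequence $p_k\to\infty$, and then passing to the limit along $p_k$. Alternatively, one can avoid the limiting argument entirely by running a direct $L^\infty$ maximum-principle / Stampacchia truncation argument on the weak formulation, testing against $(y - k)_+$-type functions and using the sign of the It\^o correction; but the $p\to\infty$ route is shorter given that \Cref{thm:ito} is already in hand.
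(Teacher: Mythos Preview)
Your approach is exactly the paper's: obtain a uniform-in-$p$ bound $\|y(t)\|_{L^p(\Omega;L^p(\dom))} \leq e^{CT}\big(\|y_T\|_{L^p}+\|F\|_{L^p}\big)$ with $C$ independent of $p$ (the paper isolates this as a separate proposition, \Cref{prop:linf_uniform}), then let $p\to\infty$ via the standard fact you cite (the paper's \Cref{lem:limit}). One point to clean up: your claim that the Gr\"onwall rate $C_2$ for $\E\|y(t)\|_{L^p}^p$ is $p$-independent is not correct --- the $\alpha y$ term contributes $p\|\alpha\|_\infty$ and the Young splitting of the $F$ term contributes $(p-1)$, so the rate is $\sim p(K+1)$ with $K=\|\alpha\|_\infty+\|\beta\|_\infty^2$; it is only \emph{after} the $1/p$-th root that the rate becomes $K+1$, uniform in $p$. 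Your self-correction (``the Gr\"onwall constant multiplies the $p$-th power and so survives the root with exponent $1$'') is the right mechanism, and the paper carries it out exactly this way.
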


Theorems \ref{thm:ito} and \ref{thm:infinity} should be compared with \cite[Theorem 3.2]{HMY02}, where similar results are presented for one-dimensional BSPDEs. However, our contributions differ in several ways. Firstly, while \cite{HMY02} is restricted to the one-dimensional case, our results are established for BSPDEs in any dimension. {Secondly, we address weak solutions rather than strong solutions. This distinction is important since weak solutions are less regular in the spatial variable and cannot be evaluated pointwise, preventing the direct application of Itô's formula to the function $y^{2p}$ as in \cite{HMY02}. Instead, we establish an Ito's formula for the $L^p$-norm in the backward case (see \Cref{prop:ito} below), using some ideas from \cite{dareiotis2015} for SPDEs, which involve {direct and constructive arguments}, like truncations and limit procedures for a suitable approximation of the $L^p$-norm (see Sections \ref{sec_tech} and \ref{sec_ito_proof})}. Finally, our work considers coefficients that are merely bounded, as opposed to some differentiability conditions imposed in \cite[eq. (2.4)]{HMY02}. Similarly, we relax the conditions on the initial datum and the source term, which are less restrictive compared to those required by \cite[eqs. (2.2)--(2.3)]{HMY02}.

As in \cite{HMY02}, we also obtain a uniform bound on the first component of the solution of the BSPDE (see Theorem \ref{thm:infinity}), which initially can be surprising due to the presence of the stochastic integral. Nevertheless, as mentioned before, the noise term in \eqref{eq:intro} is endogenous to the equation, and as noted in \cite{DT12}, this special feature makes the theory for these equations closer to deterministic PDEs than to SPDEs. 

{
\begin{rmk}
We also note that our approach provides an improved dependence on the variable
$\omega\in\Omega$ for the process $Y$; see \cite[Theorem~3.2]{HMY02} together with
\Cref{thm:ito}.  
In contrast, within the $L^\infty$ framework of \Cref{thm:infinity} no improvement in
the regularity of $Y$ is obtained.  
We refer to \Cref{new_sec_reg} for additional discussion related to the potential optimality of these results.
\end{rmk}
}

\begin{rmk}
The proof of \Cref{thm:infinity} is obtained by refining some of the estimates derived in the proof of \Cref{thm:ito}. In particular, we establish a version of \eqref{est_lp} with constants that are uniform in $p$, and then pass to the limit as $p \to \infty$.

Alternatively, one might consider using comparison principle arguments to derive the estimate in \Cref{thm:infinity}; see, for instance, \cite[Theorem 5.1]{DT12}. That result is stated under assumptions that guarantee the existence of strong solutions and require some differentiability of the initial data. Whether these conditions can be relaxed to cover our setting remains unclear. We therefore choose to follow a different, more quantitative route, which stays within the weak solution framework and naturally yields the $L^p$-estimates needed in our applications below.
\end{rmk}

\subsection{Applications}\label{app}

Once we have established the regularity of the BSPDE \eqref{eq:intro}, we are interested in studying the following applications:
\begin{enumerate}
	\item Local existence of semilinear equations.
	\item {Controllability for linear and semilinear systems.}
\end{enumerate}

The first application is concerned about the local existence of solutions to the semilinear system 
\begin{equation}
\label{eq:semi}
\begin{cases}
\d{y}=-\left(\Delta y + \alpha y + \beta Y + f(y) \right)\d{t}+Y\d{W}(t) &\text{in } (0,T)\times\dom, \\
y=0 &\text{on } (0,T)\times\partial\dom, \\
y(T)=y_T &\text{in }\dom,
\end{cases}
\end{equation}
where we assume that
\begin{enumerate}[label={\textnormal{\bf (F)}}]
\item \label{fn} $f\in C^\infty(\mathbb R)$ such that $f(0)=0$.
\end{enumerate}

A pair of random fields $(y,Y)$ is called a weak solution to \eqref{eq:semi} if
\begin{enumerate}
\item[1)] $(y,Y)$ is $L^2(\mathcal O)\times L^2(\mathcal O)$-valued and $\mathcal F_t$-measurable for each $t\in[0,T]$,
\item[2)] $(y,Y)\in \left[L^2_{\fil}(\Omega;C([0,T]);L^2(\dom))\bigcap L^2_{\fil}(0,T;H_0^1(\dom))\right]\times L^2_{\fil}(0,T;L^2(\dom))$, and
\item[3)] for any $t\in[0,T]$ and $\phi\in H_0^1(\mathcal O)$ it holds
\begin{align*}
(y(t),\phi)_{L^2(\mathcal O)}&=(y_T,\phi)_{L^2(\mathcal O)}-\int_{t}^{T}(\nabla y,\nabla \phi)_{L^2(\mathcal O)}\d{s}\\
&\quad +\int_{t}^{T}(\alpha y+\beta Y+f(y),\phi)_{L^2(\mathcal O)}\d{s}-\int_{t}^{T}(Y,\phi)_{L^2(\mathcal O)}\d{W}(s), \quad \textnormal{a.s.}
\end{align*}
\end{enumerate}

To state our result, let us define the functional space $\mathscr X:=L^2_{\fil}(\Omega;C([0,T;]L^2(\dom)))\cap L^2_{\fil}(\Omega;L^2(0,T;H_0^1(\dom)))\cap L^\infty_{\fil}(0,T;L^\infty(\dom))$ and for any $p\in[2,+\infty)$ we set 
\begin{equation}\label{eq:Yp_space}
\mathscr Y_p :=\left\{(y,Y)\in \mathscr X \times   L^{p}_{\fil}(\Omega;L^2(0,T;L^2(\dom)))\right\}
\end{equation}
endowed with its natural norm, that is,
$
\|(y,Y)\|_{\mathscr Y_p}=\|y\|_{\mathscr X}+\|Y\|_{L^{p}_{\fil}(\Omega;L^2(0,T;L^2(\dom)))}.
$
%

\begin{theo}\label{thm:semi}
Let $T>0$, $p\in[2,+\infty)$ and assume that \ref{fn} holds. There exists $\delta>0$ only depending on $\dom$, $T$, $p$ and $f$ such that for any $y_0\in L^\infty_{\F_T}(\Omega;L^\infty(\dom))$ satisfying $\|y_T\|_{L^\infty_{\F_T}(\Omega;L^\infty(\dom))}\leq\delta$, there is a unique weak solution $(y,Y)$ to \eqref{eq:semi} that belongs to $ \mathscr Y_p$.
\end{theo}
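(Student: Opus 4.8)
The plan is a contraction--mapping (Banach fixed--point) argument on a small ball of $\mathscr Y_p$, with Theorems~\ref{thm:l2_sol}, \ref{thm:ito} and \ref{thm:infinity} providing the linear engine. First I would reduce to a nonlinearity vanishing to second order at the origin: since $f\in C^\infty(\R)$ with $f(0)=0$, write $f(s)=f'(0)s+g(s)$ with $g(s):=f(s)-f'(0)s$, so that $g\in C^\infty(\R)$ and $g(0)=g'(0)=0$; replacing $\alpha$ by $\widetilde\alpha:=\alpha+f'(0)$, which still satisfies \ref{Hy}, and noting $\widetilde\alpha y+g(y)=\alpha y+f(y)$, a weak solution of \eqref{eq:semi} is precisely a weak solution of $\d y=-(\Delta y+\widetilde\alpha y+\beta Y+g(y))\dt+Y\d W(t)$ with the same lateral and terminal conditions. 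This splitting is the one genuinely structural point of the argument: it makes the nonlinear remainder subquadratic at $0$, namely $|g(s)|\le\tfrac M2 s^2$ and $|g(s_1)-g(s_2)|\le MR\,|s_1-s_2|$ for $|s|,|s_1|,|s_2|\le R\le1$, where $M:=\sup_{|s|\le1}|g''(s)|<\infty$. Without it, the size of $|f'(0)|$ would have to be beaten by a constant over which we have no control once $T$ is fixed, and the scheme would not close.

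Next I would work in the closed ball $\overline B_R:=\{(z,Z)\in\mathscr Y_p:\ \|(z,Z)\|_{\mathscr Y_p}\le R\}$ (a complete metric space) for $R\in(0,1]$ to be chosen, and define $\Lambda(z,Z):=(y,Y)$ to be the weak solution of the \emph{linear} BSPDE with coefficients $(\widetilde\alpha,\beta)$, terminal datum $y_T$ and source term $g(z)$. Since $\dom$ is bounded and $[0,T]$ finite, $g(z)\in L^\infty_{\fil}(0,T;L^\infty(\dom))\subset L^p_{\fil}(0,T;L^p(\dom))$ and $y_T\in L^\infty_{\F_T}(\Omega;L^\infty(\dom))\subset L^p_{\F_T}(\Omega;L^p(\dom))$; hence Theorem~\ref{thm:l2_sol} yields existence, uniqueness and the $L^2$-energy bound, Theorem~\ref{thm:ito} (with exponent $p$) gives $Y\in L^p_{\fil}(\Omega;L^2(0,T;L^2(\dom)))$, and Theorem~\ref{thm:infinity} gives $y\in L^\infty_{\fil}(0,T;L^\infty(\dom))$. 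In particular $\Lambda(z,Z)\in\mathscr Y_p$, and collecting the three estimates together with the bounded-domain embeddings $L^\infty(\dom)\hookrightarrow L^p(\dom)\hookrightarrow L^2(\dom)$ gives
\[
\|\Lambda(z,Z)\|_{\mathscr Y_p}\le C_1\big(\|y_T\|_{L^\infty_{\F_T}(\Omega;L^\infty(\dom))}+\|g(z)\|_{L^\infty_{\fil}(0,T;L^\infty(\dom))}\big),
\]
with $C_1=C_1(\dom,T,p,\alpha,\beta)$. Using the quadratic bound on $g$ and $\|z\|_{L^\infty_{\fil}(0,T;L^\infty(\dom))}\le R$, this becomes $\|\Lambda(z,Z)\|_{\mathscr Y_p}\le C_1\big(\delta+\tfrac M2R^2\big)$ as soon as $\|y_T\|_{L^\infty_{\F_T}(\Omega;L^\infty(\dom))}\le\delta$.

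I would then fix $R\in(0,1]$ with $C_1MR\le\tfrac12$ and take $\delta:=R/(2C_1)$ (so $R$ and $\delta$ depend only on $\dom,T,p,f,\alpha,\beta$); these choices give $\|\Lambda(z,Z)\|_{\mathscr Y_p}\le R/2+R/4\le R$, i.e. $\Lambda(\overline B_R)\subset\overline B_R$. For the contraction, $\Lambda(z_1,Z_1)-\Lambda(z_2,Z_2)$ solves by linearity the linear BSPDE with coefficients $(\widetilde\alpha,\beta)$, zero terminal datum and source $g(z_1)-g(z_2)$; the same collection of estimates together with the local Lipschitz bound $\|g(z_1)-g(z_2)\|_{L^\infty_{\fil}(0,T;L^\infty)}\le MR\,\|z_1-z_2\|_{L^\infty_{\fil}(0,T;L^\infty)}$ yields $\|\Lambda(z_1,Z_1)-\Lambda(z_2,Z_2)\|_{\mathscr Y_p}\le C_1MR\,\|(z_1,Z_1)-(z_2,Z_2)\|_{\mathscr Y_p}\le\tfrac12\|(z_1,Z_1)-(z_2,Z_2)\|_{\mathscr Y_p}$. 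The Banach fixed-point theorem then produces a unique fixed point $(y,Y)\in\overline B_R\subset\mathscr Y_p$, which, undoing the reduction, is a weak solution of \eqref{eq:semi} in $\mathscr Y_p$.

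Lastly I would promote uniqueness to all of $\mathscr Y_p$ (not merely $\overline B_R$): if $(y_1,Y_1),(y_2,Y_2)\in\mathscr Y_p$ both solve \eqref{eq:semi}, then $(w,\mathcal W):=(y_1-y_2,Y_1-Y_2)$ is a weak solution of the linear BSPDE with coefficients $(\alpha+a,\beta)$, zero terminal datum and zero source, where $a:=\int_0^1 f'\big(y_2+\theta(y_1-y_2)\big)\d\theta$. Since $y_1,y_2\in L^\infty_{\fil}(0,T;L^\infty(\dom))$ and $f'\in C^\infty$, the coefficient $a$ belongs to $L^\infty_{\fil}(0,T;L^\infty(\dom))$, so $(\alpha+a,\beta)$ satisfies \ref{Hy}, and the uniqueness part of Theorem~\ref{thm:l2_sol} forces $(w,\mathcal W)=(0,0)$. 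I expect the only genuine work to be the bookkeeping in the second paragraph: checking that the $L^2$-, $L^p$- and $L^\infty$-estimates together dominate every component of the $\mathscr Y_p$-norm, and that the terminal and source norms can all be absorbed into their $L^\infty$ counterparts using the boundedness of $\dom$ and $[0,T]$. Everything else is routine once the splitting $f=f'(0)\,\mathrm{id}+g$ is carried out.
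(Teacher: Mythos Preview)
Your proposal is correct and follows essentially the same route as the paper: split $f(s)=f'(0)s+\text{(quadratic remainder)}$, absorb the linear part into $\alpha$, and run a Banach fixed-point on a small ball of $\mathscr Y_p$ using the combined $L^2$/$L^p$/$L^\infty$ estimates (which the paper packages as Proposition~\ref{thm:new_space}). Your uniqueness step, writing $f(y_1)-f(y_2)=a\,(y_1-y_2)$ with $a\in L^\infty_{\fil}(0,T;L^\infty(\dom))$ and invoking the uniqueness clause of Theorem~\ref{thm:l2_sol}, is in fact slightly cleaner than the paper's direct Gr\"onwall argument.
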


The proof of \Cref{thm:semi} relies on a standard Banach fixed-point procedure and exploits the boundedness of the solution provided by \Cref{thm:infinity} to prove the contractivity of a suitable nonlinear map.

{Note that we do not impose any growth or sign conditions on the nonlinear term.  
Thanks to the higher regularity already established for the solution, we can apply a suitable Taylor expansion to the nonlinear term, which allows us to bypass the usual difficulties associated with low regularity (see  \Cref{sec:semilinear} and \Cref{lem:nonlinearity}).  
In this way, we obtain a local well-posedness result for \eqref{eq:semi} for sufficiently small terminal data.  
This avoids the standard global Lipschitz assumptions commonly used in the literature (see, e.g., \cite{HP91,HMY02} or \cite[Chapter~4]{LZ21}) and provides a more flexible framework for handling semilinear systems.}

Our second application is concerned about the controllability of \eqref{eq:semi}. Controllability is a qualitative property of dynamical systems that refers to the ability to steer the system from any initial state to any desired final state within a finite time period, using an appropriate control input. {To fix ideas, let us consider the linear control system}
 \begin{equation}
\label{eq:intro_control}
\begin{cases}
\d{y}=-\left(\Delta y + \alpha y + \beta Y + \chi_{\dom_0}h\right)\d{t}+Y\d{W}(t) &\text{in } (0,T)\times\dom, \\
y=0 &\text{on } (0,T)\times\partial\dom, \\
y(T)=y_T &\text{in }\dom,
\end{cases}
\end{equation}
where $h$ is an external control force localized on a set $\dom_0\subset \dom$. In this part we assume that $\mathcal O$ is connected. 

Due to Theorems \ref{thm:l2_sol}, \ref{thm:ito} and \ref{thm:infinity}, system \eqref{eq:intro_control} is well-posed for any terminal datum $y_T\in L^p_{\F_T}(\Omega;L^p(\dom))$ and any function $h\in L^p_{\fil}(0,T;L^p(\dom_0))$ with $p\in[2,+\infty]$. This motivates us to find controls $h$ such that the following controllability condition is fulfilled. 

\begin{defi}
\label{def:controlLp}
Let $p \in [2, \infty]$. System \eqref{eq:intro_control} is said to be null-controllable in $L^p$ if for any terminal datum $y_T\in L^p_{\F_T}(\Omega;L^p(\dom))$, there exists a control $h\in L^p_{\fil}(0,T;L^p(\dom_0))$ such that the corresponding weak solution $(y,Y)$ of \eqref{eq:intro_control} satisfies
\begin{equation*}
y(0,\cdot)=0 \quad\textnormal{a.s.}
\end{equation*}
\end{defi}
In the case $p=2$, this problem has been studied extensively, most notably in \cite{BRT03} and \cite{TZ09}. Using duality arguments, the controllability for \eqref{eq:intro_control} is established via Carleman estimates, which have been adapted from their classical deterministic form (see \cite{fursi}) to the stochastic setting. However, to our knowledge, there are no results in the literature addressing the case when $p>2$. To bridge this gap, we present a result covering the full range $p\in(2,+\infty]$ for a simplified yet interesting case. 

\begin{proposition}\label{thm:control}
Let $p\in(2,+\infty]$ and assume that $\beta \equiv 0$.  Then, system \eqref{eq:intro_control} is null-controllable in $L^p$. Moreover, if $T\in(0,1)$, the following estimate on the control holds
\begin{equation}
\label{eq:EsimationControlhLp}
\|h\|_{L^{p}_{\fil}(0,T;L^p(\dom))} \leq \exp(C/T) \|y_T\|_{L^{p}_{\F_T}(\Omega;L^p(\mathcal O))},
\end{equation}
for some $C>0$ depending at most on $\dom$, $\dom_0$, $p$ and $\alpha$.
\end{proposition}

{By simplifying the original system, we employ the duality approach used in \cite[Section 7]{TZ09},  but following the spirit of \cite{Liu14}, where we avoid the need to prove stochastic Carleman estimates and use instead known results in the deterministic setting. Our result extends those in \cite{BRT03} and \cite{TZ09} to a wider range of values of $p$, circumventing technical challenges typically associated with stochastic control systems. }

{
In this spirit, our final result brings together both applications and aims to establish a controllability result for the following controlled backward stochastic semilinear heat equation
\begin{equation}
\label{eq:backward_nonlinear}
\begin{cases}
\d{y} = \left(-\Delta y + f(y) + \chi_{\dom_0}h\right)\d{t} + Y\d{W}(t) &\text{in } (0,T)\times\dom, \\
y = 0 &\text{on } (0,T)\times \partial\dom, \\
y(T) = y_T &\text{in } \dom.
\end{cases}
\end{equation}
In the $L^2$-setting, the \rouge{(global)} controllability of \eqref{eq:backward_nonlinear} in the sense of \Cref{def:controlLp} has been studied in \cite{HSLBP23} (see also \cite{ZXL24}), under the assumption that $f$ is globally Lipschitz. However, as in the well-posedness results discussed earlier, this imposes a restrictive condition on the growth of $f$, which limits the applicability of the result.

A natural way to go beyond this limitation is to weaken the notion of controllability. In this direction, and in order to accommodate nonlinearities with weaker growth assumptions, we adopt the following notion of local controllability, which will serve as the framework for our result.

\begin{defi}[Local controllability]\label{def:local_control}
We say that equation \eqref{eq:backward_nonlinear} is \emph{locally controllable} in $L^\infty$ if there exists $\delta > 0$ such that for every terminal condition $y_T \in L^{\infty}_{\F_T}(\Omega;L^{\infty}(\mathcal O))$ with $\|y_T\|_{L^{\infty}_{\F_T}(\Omega;L^{\infty}(\mathcal O))} \leq \delta$, there exists a control $h \in L^{\infty}_{\fil}(0,T;L^\infty(\mathcal O_0))$ such that the weak solution $(y,Y)$ of \eqref{eq:backward_nonlinear} satisfies $y(0) = 0$ a.s.
\end{defi}

Using the control result in the linear setting stated in \Cref{thm:control} for $p=+\infty$ and in view of \Cref{thm:semi}, we can state the following result.
\begin{theo}\label{theo:control_semi}
There is $\delta>0$ depending only on $T$, $\dom$, and $\dom_0$ such that for every $y_T \in L^{\infty}_{\F_T}(\Omega;L^{\infty}(\mathcal O))$ satisfying $\|y_T\|_{L^{\infty}_{\F_T}(\Omega;L^{\infty}(\mathcal O))} \leq \delta$, there exists a control $h \in L^{\infty}_{\fil}(0,T;L^\infty(\mathcal O_0))$ such that the weak solution $(y,Y)$ of \eqref{eq:backward_nonlinear} satisfies $y(0) = 0$ a.s.
\end{theo}
{
The proof of this result combines several techniques. First, we establish a null-controllability result for the linear backward equation with an $L^{\infty}$-weighted source term, crucially relying on the cost estimate \eqref{eq:EsimationControlhLp}. To achieve this, we adapt ideas from \cite{LLT13} in the deterministic framework and from \cite{HSLBP20a} in the stochastic setting. For completeness, we present this intermediate result in our context here. The final step of the proof employs a Banach fixed-point argument, where arguments used in \Cref{thm:semi} play a key role in controlling the nonlinear terms.}

\begin{rmk}
Theorem \ref{theo:control_semi} provides, to the best of our knowledge, the first true local controllability result for backward semilinear SPDEs. Previous works have not directly addressed this question. The closest related result appears in Section 4 of \cite{HSLBP20a}, where the authors establish a weaker notion of controllability, {named statisical local controllability: instead of reaching $y(0)=0$ a.s., they prove that $\mathbb{P}(y(0)=0) \geq 1 - \varepsilon$ for some $\varepsilon>0$ sufficiently small starting from a small initial data.} This distinction is significant, as our result guarantees local controllability in the almost sure sense.

It is also worth noting that \cite{HSLBP20a} considers the full backward equation, including the $\beta Y$ term (compare \eqref{eq:intro_control} and \eqref{eq:backward_nonlinear}). In contrast, our approach does not treat this term because of the limitations in the linear controllability analysis (see \Cref{thm:control}).  Nevertheless, it does establish a controllability result in a setting where previously only high-probability statements were known.
\end{rmk}
}

{
\begin{rmk}
\Cref{theo:control_semi} does not cover the case in which the source term $f$ depends on $Y$.  
Indeed, in the key a priori estimate of the source-term method in \Cref{prop:source}, the source term is required to lie in $L^\infty$, while our analysis only yields $L^p$-regularity for the process $Y$ (with finite $p$), which is generally insufficient to close the estimates.  
This obstruction already appears at the level of $L^2$, as noted in our previous work (see \cite[Remark~4.4]{HSLBP20a}), where the source-term method does not provide suitable bounds when $f$ depends on $Y$.  
Addressing such nonlinear dependencies would likely require methods of a different nature, and we view this as an interesting direction for future research.
\end{rmk}
}

{
\begin{rmk}
For forward semilinear SPDEs of the form
\begin{equation}\label{eq:forward_intro}
\begin{cases}
\d y = (\Delta y + \chi_{\dom_0}h + f(y))\,\dt + (a y + g(y))\,\d W(t) & \text{in } (0,T)\times\dom,\\
y=0 & \text{on } (0,T)\times\partial\dom,\\
y(0)=y_0 & \text{in }\dom,
\end{cases}
\end{equation}
the basic controllability question is whether one can choose a suitable control $h$ (and possibly an additional control acting in the diffusion) to ensure
\[
y(T)=0 \quad \text{a.s.}
\]

Let us recall that the linear case (i.e.\ $f\equiv 0$, $g\equiv 0$) has been studied extensively.  
Several controllability results are available under different assumptions, relying on Carleman estimates (see \cite{fursi}) and on the Lebeau--Robbiano strategy \cite{LR95}, with both approaches requiring suitable adaptations to the stochastic setting  
(see, e.g., \cite{BRT03,TZ09,LU11,Liu14}).  
We also point out that, in some of these works, an additional control acting in the diffusion term is required; namely, the term $a y\,\d W(t)$ in \eqref{eq:forward_intro} is replaced by $(a y + H)\,\d W(t)$ for a suitably chosen control~$H$.

When nonlinearities are present, particularly in the diffusion term, the analysis becomes substantially more delicate.  
In the globally Lipschitz setting, our previous work \cite{HSLBP23} (see also \cite{ZXL24}) established global controllability but---as in the linear case---only by introducing an additional control in the diffusion term.  
Subsequently, in \cite{HSLBP20a}, we proved statistical local controllability with a single control (that is, $\mathbb{P}(y(T)=0)\geq 1-\varepsilon$ for $\varepsilon>0$ small and small initial data), under suitable structural assumptions on the nonlinear diffusion term $g(y)$, essentially in one spatial dimension.  
However, the general problem of proving {true} local controllability for semilinear equations with nonlinear noise remains open.

The present paper suggests a possible direction for improving forward results: the high-regularity estimates we obtain here for the backward equation appear to be precisely the type of regularity input that is currently missing in the forward analysis.  
Developing analogous high-regularity estimates for forward equations could plausibly lead to sharper controllability results, but this remains unexplored.
\end{rmk}
}

{
\begin{rmk}
The $L^p$- and $L^\infty$-estimates obtained in this work might also be relevant to stochastic optimal control for parabolic SPDEs.  
Backward equations naturally arise in Pontryagin-type maximum principles, and high-integrability or boundedness of the adjoint variables might play an important role in problems with bounded controls or state constraints.  
Although we do not pursue this direction here, the regularity tools developed in the present paper could extend to the optimal control setting.  
We refer the interested reader to \cite{FHT12,DM13,LZ14,FL20} and the references therein for further developments in this direction.  
We hope that the present results may provide a useful starting point for future work, and we leave this as an interesting avenue for further investigation.
\end{rmk}
}

%

\medskip

\subsection{Outline of the paper}

The rest of the paper is organized as follows. In \Cref{sec:ito}, we present and prove an Itô formula for the $L^p$-norm of the solution of the BSPDE \eqref{eq:intro}. As a consequence, we will deduce the proof of \Cref{thm:ito}. Then, in \Cref{sec:estimates} we use the Itô formula to show uniform $L^p$-estimates then \Cref{thm:infinity} concerning $L^\infty$-estimates for the solution of the BSPDE \eqref{eq:intro}. \Cref{sec:semilinear} focuses on the study of the semilinear equation \eqref{eq:semi} and its well-posedness, while \Cref{sec:control} is devoted to prove the control results in \Cref{thm:control} and  \Cref{theo:control_semi}. {Lastly, in \Cref{new_sec_reg} we give further remarks on the regularity obtained in Theorems \ref{thm:ito} and \ref{thm:infinity}.}

\section{It\^o's formula for the $L^{p}$-norm}\label{sec:ito}

The goal of this section is to state Itô's formula for the $L^{p}$-norm and deduce $L^p$-estimates for the weak solution $(y,Y)$ of \eqref{eq:intro}. Note that throughout this section, the $L^p$-estimates are not uniform with respect to $p$. The final conclusion will be the proof of \Cref{thm:ito}.

We begin by proving the following result, which is essential for the proofs of \Cref{thm:ito} and \Cref{thm:infinity}. It provides an initial a priori estimate together with an Itô's formula for the $L^p$-norm of the solution to the backward SPDE \eqref{eq:intro}. More precisely, we have the following.

\begin{proposition}[Ito's formula for $L^p$-norm]\label{prop:ito}
Let $(y,Y)$ be a weak solution to \eqref{eq:intro} and assume that $y_T\in L^p_{\mathcal F_T}(\Omega;L^p(\dom))$ and $F\in L^{p}_{\fil}(0,T;L^p(\mathcal O))$ for some $p\in[2,+\infty)$. Then, there is a constant $C>0$ only depending on $\dom$, $\alpha$, $\beta$, and $p$ such that
\begin{align}\notag
&\E\left(\sup_{t\in[0,T]}\|y(t)\|^p_{L^p(\dom)}\right)+\E\left(\int_0^T\int_{\dom}|y|^{p-2} |\nabla y|^2\dx \d t\right)\\ \label{avoc}
&+\E\left(\int_0^T\int_{\dom}|y|^{p-2}Y^2\dx \dt \right)\leq Ce^{C T}\E\left(\int_{\dom}|y_T|^p\dx+\int_0^T\int_{\dom}|F|^p\dx \dt\right).
\end{align}
Moreover, for any $t\in[0,T]$, we have
\begin{align}\notag
&\int_{\dom}|y(t)|^p\dx+p(p-1)\int_t^T\int_{\dom}|y|^{p-2}|\nabla y|^2\dx\d s+\frac{p(p-1)}{2}\int_t^T\int_{\dom}|y|^{p-2}|Y|^2\dx\d s\\ \label{itosfor}
&=\int_{\dom}|y_T|^p\dx+p\int_t^T\int_{\dom}|y|^{p-2}y\left(\alpha y+\beta Y+F\right)\dx\d{s}-p\int_t^T\int_{\dom}|y|^{p-2}yY\d{x}dW(s), \quad \textnormal{a.s.}
\end{align}
\end{proposition}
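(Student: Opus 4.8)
\emph{Strategy.} The goal is to obtain the pointwise It\^o identity \eqref{itosfor}, from which the a priori bound \eqref{avoc} follows by standard energy estimates. As already noted, a weak solution cannot be evaluated pointwise in $x$ and the map $u\mapsto\|u\|_{L^p(\dom)}^p$ is not differentiable on $L^2(\dom)$, so neither It\^o's formula applied to $u^{2p}$ (as in \cite{HMY02}) nor the Hilbert-space It\^o formula for $\|\cdot\|^2$ can be used directly. Following the spirit of \cite{dareiotis2015}, I would use a \emph{double approximation}: a spectral (Galerkin) truncation in $x$, making the solution smooth enough for the finite-dimensional It\^o formula, together with a truncation of $r\mapsto|r|^p$ by functions of at most quadratic growth, keeping every resulting integral finite at the level of the $L^2$-theory of \Cref{thm:l2_sol}. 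Concretely, for $m\ge1$ let $\psi_m\in C^2(\R)$ be even with $\psi_m(0)=\psi_m'(0)=0$ and $\psi_m''(r)=\min\{p(p-1)|r|^{p-2},\,p(p-1)m^{p-2}\}$, so that $\psi_m(r)=|r|^p$ for $|r|\le m$ and $\psi_m$ is a quadratic for $|r|>m$. Then $0\le\psi_m\uparrow|\cdot|^p$ and $0\le\psi_m''\uparrow p(p-1)|\cdot|^{p-2}$ pointwise, $\psi_m''$ is bounded, $\psi_m'$ is globally Lipschitz, and --- this is where $p\ge2$ enters --- one checks the scale-invariant inequalities
\begin{equation*}
0\le r\,\psi_m'(r)\le p\,\psi_m(r),\qquad \psi_m'(r)^2\le 4\,\psi_m(r)\,\psi_m''(r),\qquad |\psi_m'(r)|\le p|r|^{p-1},
\end{equation*}
with constants \emph{independent of $m$}.

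\emph{It\^o for the Galerkin system and the limit $n\to\infty$.} Let $\{e_k\}$ be the $L^2(\dom)$-eigenbasis of the Dirichlet Laplacian, $\Pi_n$ the orthogonal projection onto $\mathrm{span}\{e_1,\dots,e_n\}$, and $(y_n,Y_n)=(\Pi_n y,\Pi_n Y)$. Testing \Cref{def:weak} against the $e_k$ shows that $y_n$ is a finite-dimensional It\^o process solving $\d y_n=-(\Delta y_n+\Pi_n(\alpha y+\beta Y+F))\,\d s+Y_n\,\d W(s)$, with $y_n(s)$ a smooth function in $H_0^1(\dom)$. Since $v\mapsto\int_\dom\psi_m(v)\dx$ is $C^2$ on the finite-dimensional range of $\Pi_n$, the finite-dimensional It\^o formula applies; integrating by parts $\int_\dom\psi_m'(y_n)\Delta y_n\dx=-\int_\dom\psi_m''(y_n)|\nabla y_n|^2\dx$ (legitimate as $y_n$ is smooth and in $H_0^1(\dom)$) and using $\psi_m''\ge0$, one gets a $\psi_m$-version of \eqref{itosfor} for $(y_n,Y_n)$. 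Because $\psi_m$ has quadratic growth, $\psi_m'$ is Lipschitz and $\psi_m''$ is bounded, and since $(y_n,Y_n)\to(y,Y)$ in the energy spaces of \Cref{thm:l2_sol} (this being merely $\Pi_n\to I$), every term passes to the limit --- the stochastic integrals converging in probability --- yielding the same identity for $(y,Y)$: namely \eqref{itosfor} with $|\cdot|^p$ and $p(p-1)|\cdot|^{p-2}$ replaced by $\psi_m$ and $\psi_m''$, all integrals being finite for fixed $m$ since $\psi_m$ grows quadratically and $(y,Y)$ obeys the $L^2$-bounds.

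\emph{A priori bound and passage $m\to\infty$.} From this $\psi_m$-identity I would drop the sign-favourable gradient term, estimate the drift using $r\psi_m'(r)\le p\psi_m(r)$ for the $\alpha y$ term, $\psi_m'(r)^2\le4\psi_m(r)\psi_m''(r)$ with Young's inequality for the $\beta Y$ term, and $|\psi_m'(r)|\le p|r|^{p-1}$ with Young's inequality (exponents $p/(p-1),p$) for the $F$ term, and control the martingale by Burkholder--Davis--Gundy, using $\big(\int_\dom\psi_m'(y)Y\dx\big)^2\le4\int_\dom\psi_m(y)\dx\,\int_\dom\psi_m''(y)Y^2\dx$. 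Absorbing the small multiples of $\int_\dom\psi_m''(y)Y^2\dx$ and of $\sup_t\int_\dom\psi_m(y(t))\dx$ into the left-hand side (legitimate since all these are a priori finite for fixed $m$, by quadratic growth of $\psi_m$ and \Cref{thm:l2_sol}) and applying a backward Gr\"onwall inequality, one obtains the bound \eqref{avoc} with $\psi_m,\psi_m''$ in place of $|\cdot|^p,p(p-1)|\cdot|^{p-2}$ and a constant $Ce^{CT}$ \emph{independent of $m$} (using $\psi_m\le|\cdot|^p$ on the right). Letting $m\to\infty$ and invoking monotone convergence proves \eqref{avoc}; in particular $\E\sup_t\|y(t)\|_{L^p(\dom)}^p<\infty$ and $\int_0^T\int_\dom|y|^{p-2}(|\nabla y|^2+Y^2)\dx\dt<\infty$ a.s. Finally, letting $m\to\infty$ in the $\psi_m$-identity itself: for fixed $t$, the left-hand terms converge monotonically with a.s. finite limits by what precedes; the drift terms converge by dominated convergence, the dominating function $p|y|^{p-1}(|\alpha y|+|\beta Y|+|F|)$ lying in $L^1(\dom\times(t,T))$ a.s. by H\"older/Cauchy--Schwarz and the bounds just proved; and the stochastic integral converges in probability since its quadratic variation is controlled by $4\sup_s\|y(s)\|_{L^p(\dom)}^p\int_0^T\int_\dom\chi_{\{|y|>m\}}|y|^{p-2}Y^2\dx\d s\to0$ a.s. This gives \eqref{itosfor}. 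Since its right-hand side is a.s. continuous in $t$, so is $t\mapsto\|y(t)\|_{L^p(\dom)}^p$, which together with the weak continuity inherited from $y\in C([0,T];L^2(\dom))$ and $\E\sup_t\|y(t)\|_{L^p(\dom)}^p<\infty$ upgrades $y$ to $L^p_{\fil}(\Omega;C([0,T];L^p(\dom)))$, as claimed.

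\emph{Main obstacle.} The heart of the matter is the construction of $\psi_m$: one needs a $C^2$ truncation of $|\cdot|^p$ whose first and second derivatives satisfy the three scale-invariant inequalities above with $m$-independent constants, so that the a priori estimate survives the limit $m\to\infty$ --- and these hold precisely because $p\ge2$. The remaining technical point is justifying the passage to the limit in the stochastic integral, which rests on the dominated-convergence bound for its quadratic variation.
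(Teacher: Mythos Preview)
Your proof is correct and follows essentially the same route as the paper: the truncation functions $\psi_m$ you introduce are literally the paper's $\phi_n$ (cf.\ the display \eqref{apr} and properties \eqref{prop_r_phi_n}--\eqref{prop_phi_pp_n}), and the subsequent derivation of the a priori bound via the scale-invariant inequalities, Young, BDG, and backward Gr\"onwall, together with the passage $m\to\infty$ by monotone/dominated convergence, mirrors \Cref{prop_1}, \Cref{prop_supremum}, and the final proof of \Cref{prop:ito}. The one substantive addition is your Galerkin projection step to justify the $\psi_m$-identity for the weak solution: the paper simply asserts in \Cref{lem:ipp} that ``we can apply It\^o's formula'' to $\int_\dom\phi_n(y)\dx$, implicitly relying on a Hilbert-space It\^o formula in the variational framework, whereas you reduce to the finite-dimensional It\^o formula and pass to the limit --- a more self-contained justification of the same identity.
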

%

%
%
%
%
%

For the proof, we follow the methodology in \cite{dareiotis2015} adapted to the backward case and for the reader's convenience we present it in several steps. 

\subsection{Technical lemmas}\label{sec_tech}

We begin by presenting some auxiliary functions. For each $n\in\mathbb N$ and some $p\geq 2$, consider the twice continuously differentiable function $\phi_n:\mathbb R\to\mathbb R$ defined by 
\begin{align}\label{apr}
\phi_n(r):=
\begin{cases}
    |r|^p  & \text{ if } |r|<n, \\
     n^{p-2}\frac{p(p-1)}{2}\left(|r|-n\right)^2+pn^{p-1}\left(|r|-n\right)+n^p & \text{ if } |r|\geq n.
\end{cases}
\end{align}
It is not difficult to check that $\phi_n$ satisfy
\begin{equation}\label{eq:quad_bound}
|\phi_n(r)|\leq M|r|^2, \qquad |\phi_n^\prime(r)|\leq M|r|, \qquad |\phi^{\prime\prime}(r)|\leq M,\qquad r \in \R,
\end{equation}
where $M>0$ depends only on $p$ and $n$. Furthermore, we have
\begin{equation}\label{eq:p_bounds}
|\phi_n(r)|\leq N|r|^p, \qquad |\phi_n^\prime(r)|\leq N|r|^{p-1}, \qquad |\phi^{\prime\prime}(r)|\leq N|r|^{p-2},\qquad r \in \R,
\end{equation}
for a positive constant $N$ only depending on $p$, and if $n\to \infty$, we have
\begin{align}\label{prop:convr}
\phi_n(r)\to |r|^p,\qquad  \phi_n^{\prime}(r)\to p|r|^{p-2}r, \qquad \phi_n^{\prime\prime}(r)\to p(p-1)|r|^{p-2},\qquad r \in \R.
\end{align}
Additionally, we can check that the following inequalities hold
\begin{align}\label{prop_r_phi_n}
|r\phi_n^\prime(r)| &\leq p\phi_n(r), \\ \label{prop_phi_n}
|\phi_n^\prime(r)|^2 &\leq 4 p \phi_n^{\prime\prime}(r)\phi_n(r), \\ \label{prop_phi_pp_n}
[\phi^{\prime\prime}(r)]^{p/(p-2)} &\leq \left[p(p-1)\right]^{p/(p-2)}\phi_n(r)\qquad \textnormal{for all } r \in \R,
\end{align}
and, by the definition of \eqref{apr}, we can see that 
\begin{equation}\label{eq:pos_phi_pp}
\phi_n^{\prime\prime}(r)\geq 0 \quad \textnormal{for all $r\in\mathbb R$}.
\end{equation}

We start with the following lemma.
\begin{lemma}\label{pf:mart}
Let $n\in \mathbb{N}$, $p\in[2+\infty)$, $t\in[0,T]$, and $(y,Y)$ be a weak solution to \eqref{eq:intro}. Then
\begin{equation*}
\E\left(\int_t^T\int_{\dom}\phi_n^{\prime}(y)Y\dx\,\d W(s)\right)=0.
\end{equation*}
\end{lemma}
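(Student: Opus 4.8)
The goal is to show that the stochastic integral $\int_t^T \int_{\dom} \phi_n'(y) Y \dx \, dW(s)$ has zero expectation. The standard way to do this is to verify that the integrand $s \mapsto \int_{\dom} \phi_n'(y(s)) Y(s) \dx$ is a genuine element of $L^2_{\fil}(0,T)$ (i.e., it is $\fil$-adapted and square-integrable in $(\omega,s)$), because then the It\^o integral is a true martingale started from $0$, hence has vanishing expectation. Adaptedness is immediate from the adaptedness of $(y,Y)$ and the continuity of $\phi_n'$. So the only real content is the integrability bound.

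\textbf{Main estimate.} I would estimate, using the Cauchy--Schwarz inequality in the spatial variable together with the quadratic bounds \eqref{eq:quad_bound} on $\phi_n'$,
\begin{align*}
\E\left(\int_t^T \left|\int_{\dom} \phi_n'(y) Y \dx\right|^2 \d s\right)
&\leq \E\left(\int_t^T \left(\int_{\dom} |\phi_n'(y)|\,|Y| \dx\right)^2 \d s\right)\\
&\leq M^2\, \E\left(\int_t^T \left(\int_{\dom} |y|\,|Y| \dx\right)^2 \d s\right)\\
&\leq M^2\, \E\left(\int_t^T \|y(s)\|_{L^2(\dom)}^2 \|Y(s)\|_{L^2(\dom)}^2 \d s\right)\\
&\leq M^2\, \E\left(\sup_{s\in[0,T]}\|y(s)\|_{L^2(\dom)}^2 \int_0^T \|Y(s)\|_{L^2(\dom)}^2 \d s\right).
\end{align*}
Here I used that $|\phi_n'(r)| \leq M|r|$ with $M = M(p,n)$. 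The last quantity is finite: since $(y,Y)$ is a weak solution, by \Cref{def:weak} we have $y \in L^2_{\fil}(\Omega;C([0,T];L^2(\dom)))$ and $Y \in L^2_{\fil}(0,T;L^2(\dom))$, and the finiteness follows from the energy estimate \eqref{thm:l2_reg} in \Cref{thm:l2_sol} (one can also simply bound the product crudely; the supremum term has finite second moment and $\int_0^T\|Y\|^2\d s$ has finite first moment, but to be safe one applies Cauchy--Schwarz once more or invokes that $y \in L^\infty$ is not yet available — so the cleanest route is to note $\E(\sup_s\|y(s)\|^2 \int_0^T\|Y\|^2\d s) \leq (\E\sup_s\|y(s)\|^4)^{1/2}(\E(\int_0^T\|Y\|^2\d s)^2)^{1/2}$ only if those fourth/second moments are known; since \Cref{thm:l2_sol} gives only second moments, I would instead first localize).

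\textbf{The one subtlety: localization.} The quantity $\E(\sup_s\|y(s)\|_{L^2}^2 \int_0^T\|Y\|_{L^2}^2\d s)$ is a product of two random variables each with finite expectation but not obviously with finite expectation of the product. This is the one point requiring care, and the standard fix is a stopping-time argument. Define $\tau_k := \inf\{t \geq 0 : \|y(t)\|_{L^2(\dom)}^2 \geq k\} \wedge T$. On $[t,\tau_k]$ the factor $\sup\|y\|^2$ is bounded by $k$, so the integrand is in $L^2_{\fil}(0,\tau_k)$ and $\E(\int_t^{\tau_k}\int_{\dom}\phi_n'(y)Y\dx\,dW(s)) = 0$. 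Since $y$ has continuous paths in $L^2(\dom)$ a.s. and finite second moment of the sup, $\tau_k \to T$ a.s. as $k\to\infty$, and the full integral $\int_t^T\int_{\dom}\phi_n'(y)Y\dx\,dW(s)$ is recovered as the a.s. limit; a dominated-convergence argument (dominating by the square function, whose expectation is finite by the Cauchy--Schwarz/energy estimate above applied once properly — or simply noting the quadratic variation is integrable) then transfers the zero-expectation property to the limit. Alternatively, and more simply, one observes that $\sup_s\|y(s)\|_{L^2}^2 \cdot \int_0^T\|Y\|_{L^2}^2\d s \leq \frac12\big(\sup_s\|y(s)\|_{L^2}^2\big)^2 + \frac12\big(\int_0^T\|Y\|_{L^2}^2\d s\big)^2$, and one may not have these squared moments from \Cref{thm:l2_sol} alone — so the stopping-time route is the safe one. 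I expect this localization step to be the only genuine obstacle; everything else is a routine application of Cauchy--Schwarz and the bounds \eqref{eq:quad_bound}.
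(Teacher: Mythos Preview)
Your diagnosis of the subtlety is exactly right: the obstruction is that $\E\big(\sup_s\|y(s)\|_{L^2}^2\cdot\int_0^T\|Y\|_{L^2}^2\,\d s\big)$ is a product of two $L^1(\Omega)$ random variables with no a priori reason to be integrable, so the naive bound for $\E\big(\int_0^T H^2\,\d s\big)$ does not close. However, your proposed fix via stopping times is more involved than necessary and, as written, is not quite complete: passing from $\E\big(\int_t^{\tau_k}H\,\d W\big)=0$ to $\E\big(\int_t^{T}H\,\d W\big)=0$ still requires a uniform-integrability or domination argument, and the natural dominating quantity is $\sup_k\big|\int_t^{\tau_k}H\,\d W\big|$, whose integrability via BDG again hinges on $\E\big[(\int_0^T H^2\,\d s)^{1/2}\big]<\infty$ --- precisely the condition you were trying to sidestep.

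The paper avoids localization altogether by aiming from the start for the weaker (but sufficient) condition $\E\big[(\int_0^T H^2\,\d s)^{1/2}\big]<\infty$ rather than $\E\big[\int_0^T H^2\,\d s\big]<\infty$. After the same pathwise Cauchy--Schwarz bound you wrote, one takes the square root \emph{before} taking expectation and then applies Cauchy--Schwarz in $\Omega$:
\[
\E\Big[\Big(\sup_s\|y(s)\|_{L^2}^2\int_0^T\|Y\|_{L^2}^2\,\d s\Big)^{1/2}\Big]
\leq \Big(\E\sup_s\|y(s)\|_{L^2}^2\Big)^{1/2}\Big(\E\int_0^T\|Y\|_{L^2}^2\,\d s\Big)^{1/2},
\]
and both factors are finite by the $L^2$ energy estimate \eqref{thm:l2_reg}. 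This square-root trick is the one missing idea in your sketch; once you have it, no stopping times are needed and the stochastic integral is a true martingale with zero mean.
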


\begin{proof}
By Cauchy-Schwarz and H\"older inequalities together with \eqref{eq:quad_bound} we see that 
\begin{align*}
\int_t^T\left(\int_{\dom}\phi_n^{\prime}(y)Y\dx\right)^2\d s&\leq \int_t^T \|\phi_n^{\prime}(y)\|^2_{L^2(\dom)}\|Y\|^2_{L^2(\dom)}\d s\\
&\leq  M \int_t^T\|y\|^{2}_{L^2(\dom)}\|Y\|^2_{L^2(\dom)}\d s\\
&\leq  M \sup_{s\in(t,T]}\|y(s)\|^{2}_{L^2(\dom)}\int_t^T\|Y\|^2_{L^2(\dom)}\d s, \quad \text{a.s.}
\end{align*}
for any $t\in[0,T]$. Taking expectation and using again Cauchy-Schwarz's inequality we get
\begin{align*}
\E\left(\left[\int_t^T\left(\int_{\dom}\phi_n^{\prime}(y)Y\dx\right)^2\d s\right]^{1/2}\right)\leq M \E\left(\sup_{s\in(t,T]}\|y(s)\|^{2}_{L^2(\dom)}\right)^{1/2}\E\left(\int_t^T\|Y\|^2_{L^2(\dom)}\d s\right)^{1/2}
\end{align*}
whence 
\begin{align*}
&\E\left(\left[\int_t^T\left(\int_{\dom}\phi_n^{\prime}(y)Y\dx\right)^2\d s\right]^{1/2}\right)\\
&\leq\frac{M}{2}\left[\E\left(\sup_{s\in(t,T]}\|y(s)\|^{2}_{L^2(\dom)}\right)+\E\left(\int_t^T\|Y\|^2_{L^2(\dom)}\d s\right)\right]< +\infty.
\end{align*}
by the regularity estimate \eqref{thm:l2_reg}. Defining $H(s):=\int_{\dom}\phi_n^\prime(y(s))Y(s)\dx$, we have proved that $H(\cdot)\in L^2(\Omega\times(0,T))$. Therefore, $\E\left(\int_t^{T}H(s)\d{W}(s)\right)=\E\left(\int_0^{T}H(s)\d{W}(s)-\int_0^t H(s)\d{W}(s)\right)=0$. This ends the proof.
\end{proof}

We continue with the following identity.
\begin{lemma}\label{lem:ipp}
Let $n\in \mathbb{N}$ and $(y,Y)$ be a weak solution to \eqref{eq:intro}. For any $t\in[0,T]$, the following identity holds 
\begin{align}\notag
\int_{\dom}&\phi_n(y(t))\dx+\int_t^T\int_{\dom}\phi^{\prime\prime}_n(y)|\nabla y|^2\dx\d s+\frac{1}{2}\int_t^T\int_{\dom} \phi^{\prime}_n(y)Y^2 \dx\d s\\ \label{aft_ito}
&=\int_{\dom}\phi_n(y(T))\dx+ \int_t^T\int_{\dom} \phi^{\prime}_n(y)(\alpha y+\beta Y+F) \dx\d s - \int_t^T\int_{\dom}\phi_n^{\prime}(y)Y\dx\d W(s), \quad \text{ a.s.}
\end{align}
\end{lemma}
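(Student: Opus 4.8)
The plan is to apply the classical (Hilbert-space) Itô formula to the smooth function $u \mapsto \int_\dom \phi_n(u)\,\dx$ evaluated along the solution $y$. The function $\phi_n$ was specifically designed with the quadratic bounds \eqref{eq:quad_bound}: since $|\phi_n(r)| \leq M|r|^2$, $|\phi_n'(r)| \leq M|r|$, and $|\phi_n''(r)| \leq M$ with $M$ depending only on $p$ and $n$, the functional $\Phi_n(u) := \int_\dom \phi_n(u)\,\dx$ is well-defined, $C^2$, and has bounded second derivative on $L^2(\dom)$. This is exactly the regularity needed to legitimately apply an Itô formula to a process living only in $L^2_\fil(\Omega;C([0,T];L^2(\dom))) \cap L^2_\fil(0,T;H^1_0(\dom))$ with martingale part in $L^2_\fil(0,T;L^2(\dom))$.

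First I would invoke the Itô formula for the square of the $L^2$-norm in the backward setting (or, more precisely, its generalization to $C^2$ functionals with at most quadratic growth — this is the standard Itô formula à la Pardoux or Krylov–Rozovskii, adapted to backward equations as in \cite{zhou92}). Applied between $t$ and $T$ with the dynamics $\d y = -(\Delta y + \alpha y + \beta Y + F)\,\d t + Y\,\d W$, this gives
\begin{align*}
\int_\dom \phi_n(y(t))\,\dx &= \int_\dom \phi_n(y(T))\,\dx + \int_t^T \langle \phi_n'(y), \Delta y + \alpha y + \beta Y + F\rangle_{L^2(\dom)}\,\d s \\
&\quad - \int_t^T \int_\dom \phi_n'(y)\, Y \,\dx\, \d W(s) - \frac{1}{2}\int_t^T \int_\dom \phi_n''(y)\, Y^2\,\dx\,\d s,
\end{align*}
where the sign on the quadratic variation term reflects that one integrates backward in time. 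The duality pairing $\langle \phi_n'(y), \Delta y\rangle$ makes sense once one performs the integration by parts $\int_\dom \phi_n'(y)\Delta y\,\dx = -\int_\dom \phi_n''(y)|\nabla y|^2\,\dx$, which is justified because $y(s) \in H^1_0(\dom)$ for a.e. $s$ and $\phi_n'$ is Lipschitz (so $\phi_n'(y) \in H^1_0(\dom)$ with $\nabla(\phi_n'(y)) = \phi_n''(y)\nabla y$ by the chain rule for Sobolev functions). Substituting this identity and rearranging the $\frac12\int\phi_n'' Y^2$ terms to the left-hand side yields exactly \eqref{aft_ito}.

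The main obstacle is the rigorous justification of the Itô formula itself at this low level of spatial regularity: $\Delta y$ does not live in $L^2(\dom)$, only in $H^{-1}(\dom)$, so the term $\langle \phi_n'(y), \Delta y\rangle$ is an $H^{-1}$–$H^1_0$ duality pairing rather than an $L^2$ inner product, and one must verify the variational framework (Gelfand triple $H^1_0 \hookrightarrow L^2 \hookrightarrow H^{-1}$) applies. The cleanest route is a two-step approximation: first establish the formula for smooth $\phi_n$ via the abstract Itô formula for the variational setting (valid since $y$ solves a variational backward SPDE with $\Delta y$ as the $H^{-1}$-valued drift component and the functional $\Phi_n$ is $C^2$ with quadratic growth), or alternatively apply the finite-dimensional Itô formula to Galerkin approximations $y^m$ and pass to the limit $m \to \infty$, using the quadratic bounds \eqref{eq:quad_bound} for uniform integrability and the strong convergences from \Cref{thm:l2_sol}. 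I expect the Galerkin route to require the most care, since one must control the nonlinear terms $\phi_n'(y^m)$ and $\phi_n''(y^m)$ under the available modes of convergence; however, the boundedness of $\phi_n''$ and the Lipschitz continuity of $\phi_n'$ (consequences of the explicit quadratic extension in \eqref{apr}) make every passage to the limit routine once the convergence framework is in place.
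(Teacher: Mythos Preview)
Your proposal is correct and follows essentially the same route as the paper: apply It\^o's formula to the $C^2$ functional $\Phi_n(u)=\int_\dom \phi_n(u)\,\dx$ along the weak solution, then integrate by parts in space using $\nabla(\phi_n'(y))=\phi_n''(y)\nabla y$ and the Dirichlet boundary condition, and rearrange. The paper's proof is terser and does not spell out the variational (Gelfand triple) justification or the Galerkin passage you describe, but the argument is the same.
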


\begin{proof}
Since $\phi_n$ is a twice continuously differentiable function, we can apply It\^o's formula and use equation \eqref{eq:intro} to get
\begin{align*}\notag
\int_{\dom}\phi_n(y(T))\dx&=\int_{\dom}\phi_n(y(t))\dx-\int_t^T\int_{\dom}\phi^{\prime}_n(y)\Delta y\dx\d{s}-\int_t^T\int_{\dom}\left(\phi^{\prime}_n(y)F-\frac{1}{2}\phi^{''}_n(y)Y^2\right)\dx\d s \\ 
&-\int_{t}^{T}\int_{\dom} \phi_n^\prime(y)(\alpha y +\beta Y)\dx\d{s}+\int_t^T\int_{\dom}\phi_n^{'}(y)Y\dx\d W(s), \quad \text{ a.s.}
\end{align*}
for any $t\in[0,T]$. Note that for $i=1,\ldots,d$, $\partial_{x_i}(\phi_n^\prime(y))=\phi_n^{\prime\prime}(y)\partial_{x_i}y$, whence integrating by parts in the space variable and using the homogeneous Dirichlet boundary condition of \eqref{eq:intro} yields 
\begin{align*}\notag
\int_{\dom}\phi_n(y(T))\dx&=\int_{\dom}\phi_n(y(t))\dx+\int_t^T\int_{\dom}\phi^{\prime\prime}_n(y)|\nabla y|^2\dx\d s-\int_t^T\int_{\dom}\left(\phi^{\prime}_n(y)F-\frac{1}{2}\phi^{\prime\prime}_n(y)Y^2\right)\dx\d s\\ 
&-\int_{t}^{T}\int_{\dom} \phi_n^\prime(y)(\alpha y +\beta Y)\dx\d{s}+\int_t^T\int_{\dom}\phi_n^{\prime}(y)Y\dx\d W(s), \quad \text{ a.s.}
\end{align*}
After rearranging some terms, we obtain the desired equality.
\end{proof}

In the remainder of this section, $C$ denotes a generic positive constant uniform with respect to $n$ and that may change from line to line. We have the following estimate.

\begin{lemma}\label{prop_1}
Let $n\in \mathbb{N}$, $p\in[2,+\infty)$, $(y,Y)$ be a weak solution to \eqref{eq:intro} and assume that $y_T\in L^p_{\mathcal F_T}(\Omega;L^p(\dom))$ and $F\in L^{p}_{\fil}(0,T;L^p(\mathcal O))$. Then, the following estimate holds
\begin{align}\label{est:2}
\E&\left(\int_0^T\int_{\dom}\phi_n^{\prime\prime}(y)|\nabla y|^2\dx\d s\right)+\E\left(\int_0^T\int_{\dom}\phi_n^{\prime\prime}(y)Y^2\dx\d s\right)\\\notag
&\leq C e^{CT}\E\left(\int_{\dom}|y_T|^p\dx+\int_0^T\int_{\dom}|F|^p\dx\d s \right),
\end{align}
for some constant $C>0$ that only depends on $p$, $\|\alpha\|_{L^\infty_{\fil}(0,T;L^\infty(\dom))}$ and $\|\beta\|_{L^\infty_{\fil}(0,T;L^\infty(\dom))}$.
\end{lemma}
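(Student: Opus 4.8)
The plan is to take the identity \eqref{aft_ito} from \Cref{lem:ipp}, take expectations (using \Cref{pf:mart} to kill the stochastic integral), and estimate the right-hand side terms using the structural inequalities \eqref{prop_r_phi_n}--\eqref{prop_phi_pp_n} together with Young's inequality, so as to absorb the problematic $\phi_n^{\prime\prime}(y)Y^2$ term into the left-hand side. Concretely, after setting $t=0$ in \eqref{aft_ito} and taking expectations, I would write
\[
\E\!\left(\int_{\dom}\phi_n(y(0))\dx\right)+\E\!\left(\int_0^T\!\!\int_{\dom}\phi^{\prime\prime}_n(y)|\nabla y|^2\dx\d s\right)+\frac12\E\!\left(\int_0^T\!\!\int_{\dom}\phi^{\prime\prime}_n(y)Y^2\dx\d s\right)
\]
equals $\E(\int_{\dom}\phi_n(y_T)\dx)+\E(\int_0^T\!\int_{\dom}\phi_n^\prime(y)(\alpha y+\beta Y+F)\dx\d s)$. (Note the left side already has the correct $\frac12\phi_n^{\prime\prime}Y^2$ rather than $\phi_n^\prime Y^2$ once one rereads \eqref{aft_ito}; I would be careful here since the displayed \eqref{aft_ito} writes $\phi_n^\prime(y)Y^2$, and I would use the corrected form with $\phi_n^{\prime\prime}$.)

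Next I would bound each piece of the right-hand side. For the $\alpha$-term: $|\phi_n^\prime(y)\alpha y|\le \|\alpha\|_\infty |y\phi_n^\prime(y)|\le p\|\alpha\|_\infty \phi_n(y)$ by \eqref{prop_r_phi_n}. For the $F$-term: using \eqref{prop_phi_pp_n} in the form $|\phi_n^\prime(y)|\le (\phi_n^{\prime\prime}(y))^{1/2}(\phi_n(y))^{1/2}\cdot(\text{const})$? — more straightforwardly, I would use \eqref{prop_phi_n}: $|\phi_n^\prime(y)|^2\le 4p\,\phi_n^{\prime\prime}(y)\phi_n(y)$, so that $|\phi_n^\prime(y)F|\le \varepsilon \phi_n^{\prime\prime}(y)Y^2 \cdot (\dots)$ doesn't quite work for $F$; instead for $F$ I would apply Young's inequality with exponents $p$ and $p/(p-1)$ to $|\phi_n^\prime(y)||F|$ and then use \eqref{prop_phi_pp_n}/\eqref{eq:p_bounds} to control $|\phi_n^\prime(y)|^{p/(p-1)}$ by a constant times $\phi_n(y)$: indeed $|\phi_n^\prime(r)|\le N|r|^{p-1}$ gives $|\phi_n^\prime(r)|^{p/(p-1)}\le N^{p/(p-1)}|r|^p$, and $|r|^p$ is comparable to $\phi_n(r)$ up to constants via \eqref{eq:p_bounds} — actually $\phi_n(r)\le N|r|^p$ is the wrong direction, so I would instead bound $|\phi_n^\prime(y)F|\le \frac{p-1}{p}|\phi_n^\prime(y)|^{p/(p-1)}+\frac1p|F|^p$ and then estimate $|\phi_n^\prime(y)|^{p/(p-1)}$ directly, splitting into $|y|<n$ where it equals $p^{p/(p-1)}|y|^p=p^{p/(p-1)}\phi_n(y)$ and $|y|\ge n$ where an elementary computation from \eqref{apr} again yields a bound by $C\phi_n(y)$. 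The key structural inequality \eqref{prop_phi_pp_n} is precisely designed to make this clean: $[\phi_n^{\prime\prime}(r)]^{p/(p-2)}\le[p(p-1)]^{p/(p-2)}\phi_n(r)$, which after Young with exponents $p/2$ and $p/(p-2)$ lets me write $\phi_n^{\prime\prime}(y)|F|^2\le \text{(small)}\,[\phi_n^{\prime\prime}(y)]^{p/(p-2)}+C|F|^p$ — hmm, that bounds a different term. The cleanest route for $F$ is simply Young on $|\phi_n^\prime(y)|\,|F|$ as above.

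The crucial term is $\phi_n^\prime(y)\beta Y$: here I would use \eqref{prop_phi_n} and Young's inequality to write
\[
|\phi_n^\prime(y)\beta Y|\le \|\beta\|_\infty|\phi_n^\prime(y)||Y|\le \frac{1}{8}\phi_n^{\prime\prime}(y)Y^2+2p\|\beta\|_\infty^2\phi_n(y),
\]
using $|\phi_n^\prime(y)||Y|\le \tfrac{1}{8\|\beta\|_\infty}\phi_n^{\prime\prime}(y)Y^2+2\|\beta\|_\infty|\phi_n^\prime(y)|^2/\phi_n^{\prime\prime}(y)$ combined with $|\phi_n^\prime|^2\le 4p\phi_n^{\prime\prime}\phi_n$ (with the convention that where $\phi_n^{\prime\prime}=0$ the product $\phi_n^\prime Y$ also vanishes; this needs a small verification from \eqref{apr} that $\phi_n^{\prime\prime}$ vanishes only on a set where $\phi_n^\prime$ is likewise controlled — in fact $\phi_n^{\prime\prime}>0$ for $p>2$ everywhere except $r=0$, and at $r=0$, $\phi_n^\prime(0)=0$). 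Then the $\tfrac18\phi_n^{\prime\prime}(y)Y^2$ is absorbed into the $\tfrac12$ on the left. After absorption I am left with
\[
\text{LHS}\le \E\!\left(\int_{\dom}\phi_n(y_T)\dx\right)+C\E\!\left(\int_0^T\!\!\int_{\dom}\phi_n(y)\dx\d s\right)+C\E\!\left(\int_0^T\!\!\int_{\dom}|F|^p\dx\d s\right),
\]
where $C$ depends only on $p$, $\|\alpha\|_\infty$, $\|\beta\|_\infty$. To close, I would additionally apply \eqref{aft_ito} at a general time $t$ (or note that the same computation with $\int_{\dom}\phi_n(y(t))\dx$ on the left and bounding only the drift terms forward in time) to get $\E\int_{\dom}\phi_n(y(t))\dx\le \E\int_{\dom}\phi_n(y_T)\dx+C\int_t^T\E\int_{\dom}\phi_n(y)\dx\d s+C\E\int_0^T\int_\dom|F|^p$, and invoke Grönwall's lemma in the backward variable to obtain $\E\int_{\dom}\phi_n(y(t))\dx\le Ce^{CT}\E(\int_\dom\phi_n(y_T)+\int_0^T\int_\dom|F|^p)$. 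Feeding this back into the absorbed inequality yields \eqref{est:2} with $\phi_n^{\prime\prime}$ in place of the claimed... wait — the statement has no $\phi_n^{\prime\prime}$ issue; it is stated with $\phi_n^{\prime\prime}(y)$ on both left-hand terms, matching exactly what this argument produces. The main obstacle I anticipate is the careful handling of the $\beta Y$ term, ensuring the Young constant is small enough to be absorbed and that the degeneracy of $\phi_n^{\prime\prime}$ at the origin (when $p=2$, where $\phi_n^{\prime\prime}$ is bounded below) or its interaction with \eqref{prop_phi_n} causes no division-by-zero; one handles this by noting \eqref{prop_phi_n} is a clean inequality valid for all $r$ (including where $\phi_n^{\prime\prime}=0$, forcing $\phi_n^\prime=0$), so no pointwise division is ever actually performed — one instead argues $\|\beta\|_\infty|\phi_n^\prime(y)||Y|\le \frac18\phi_n^{\prime\prime}(y)Y^2+2\|\beta\|_\infty^2\frac{|\phi_n^\prime(y)|^2}{\phi_n^{\prime\prime}(y)}\mathbf 1_{\phi_n^{\prime\prime}(y)>0}\le\frac18\phi_n^{\prime\prime}(y)Y^2+8p\|\beta\|_\infty^2\phi_n(y)$ by \eqref{prop_phi_n}, which is legitimate.
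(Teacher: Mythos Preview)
Your approach is correct and essentially identical to the paper's: take expectation in \eqref{aft_ito} (the paper passes through a conditional expectation first, but then takes expectation and invokes \Cref{pf:mart} anyway), bound the three drift terms exactly as you describe---$\alpha$ via \eqref{prop_r_phi_n}, $\beta Y$ via \eqref{prop_phi_n} plus Young so as to absorb a small multiple of $\phi_n''(y)Y^2$, and $F$ via Young with exponents $p,\,p/(p-1)$---then apply backward Gr\"onwall to $t\mapsto\E\int_{\dom}\phi_n(y(t))\dx$ and feed the result back. For the $F$-term the paper streamlines your case-split by combining \eqref{prop_phi_n} and \eqref{prop_phi_pp_n} to get $|\phi_n'(r)|\leq C\,\phi_n(r)^{(p-1)/p}$ directly, after which Young gives $|\phi_n'(y)F|\leq C(|F|^p+\phi_n(y))$ in one line.
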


\begin{proof}
Let $t>0$ and $0<r\leq t$. From identity \eqref{aft_ito} of \Cref{lem:ipp}, we take conditional expectation to obtain
\begin{align*}\notag
\E\left(\int_{\dom}\right.&\left.\phi_n(y(t))\dx+\int_t^T\int_{\dom}\phi^{\prime\prime}_n(y)|\nabla y|^2\dx\d s+\frac{1}{2}\int_t^T\int_{\dom}\phi^{\prime\prime}_n(y)Y^2\dx\d s \bigmid \F_r \right)\\\notag
&=\E\left(\int_{\dom}\phi_n(y(T))\dx+\int_t^T\int_{\dom}\phi^{\prime}_n(y)(\alpha y+\beta Y+F)\dx\d s  \bigmid \F_r\right)
\end{align*}
where we have used that $\E\left(\int_t^T\int_{\dom}\phi_n^{\prime}(y)Y\dx\d W(s) \mid \F_r\right)=0$ due to \Cref{pf:mart}. 
From  \eqref{prop_r_phi_n}-\eqref{prop_phi_pp_n}, the positivity property \eqref{eq:pos_phi_pp} and Young's inequality, we can deduce
\begin{align}\label{prop:1}
|\phi_n^\prime(y)F| &\leq C|F| \phi_n(y)^{(p-1)/p}\leq C\big(|F|^p+\phi_n(y)\big), \\ \label{prop:2}
|\phi_n^\prime(y)\alpha y| &\leq C|\alpha| \phi_n(y), \\ \label{prop:3}
|\phi_n^\prime(y)\beta Y| &\leq \delta \phi_n^{\prime\prime}(y)|Y|^2+\tfrac{C|\beta|^2}{\delta}\phi_n(y),
\end{align}
almost surely, for a constant $C>0$ only depending on $p$ and any $\delta \in(0,1)$. Therefore, taking $\delta$ small enough
\begin{align*}
\E\left(\int_{\dom}\right.&\left.\phi_n(y(t))\dx+\int_t^T\int_{\dom}\phi^{\prime\prime}_n(y)|\nabla y|^2\dx\d s+\int_t^T\int_{\dom}\phi^{\prime\prime}_n(y)Y^2\dx\d s \bigmid \F_r \right)\\\notag
&\leq C\E\left(\int_{\dom}\phi_n(y(T))\dx+\int_t^T\int_{\dom}|F|^p\dx\d s+\int_t^T\int_{\dom}\phi_n(y)\dx\d s \bigmid \F_r\right),
\end{align*}
where $C>0$ depends on $p$ and the norms $\|\alpha\|_{L^\infty_{\fil}(0,T;L^\infty(\dom))}$ and $\|\beta\|_{L^\infty_{\fil}(0,T;L^\infty(\dom))}$.

Taking expectation in the above inequality and by Fubini's theorem and \Cref{pf:mart}, we get
\begin{align}\notag
\E\left(\int_{\dom}\right.&\left.\phi_n(y(t))\dx+\int_t^T\int_{\dom}\phi^{\prime\prime}_n(y)|\nabla y|^2\dx\d s+\int_t^T\int_{\dom}\phi^{\prime\prime}_n(y)Y^2\dx\d s \right)\\ \label{est:inter}
&\leq C\E\left(\int_{\dom}\phi_n(y(T))\dx+\int_0^T\int_{\dom}|F|^p\dx\d s\right)+C\int_t^T \E \left(\int_{\dom}\phi_n(y)\dx\d s  \right).
\end{align}

By \eqref{eq:pos_phi_pp}, we can drop the second and third terms in the above estimate and, by the backward Grönwall inequality (see \Cref{lem:back_gron}), we obtain
\begin{align}\label{est:inter2}
\E\left(\int_{\dom}\phi_n(y(t))\dx \right) 
\leq Ce^{CT}\E\left(\int_{\dom}\phi_n(y(T))\dx+\int_0^T\int_{\dom}|F|^p\dx\d s\right).
\end{align}
Integrating in time \eqref{est:inter2} and combining with \eqref{est:inter} entails 
\begin{align}\notag
\E&\left(\int_{\dom}\phi_n(y(t))\dx\right)+\E\left(\int_t^T\int_{\dom}\phi_n^{\prime\prime}(y)|\nabla y|^2\dx\d s\right)+\E\left(\int_t^T\int_{\dom}\phi_n^{\prime\prime}(y)Y^2\dx\d s\right)\\ \label{est:inter_f}
&\leq Ce^{CT}\left[\E\left(\int_{\dom}\phi_n(y(T))\dx\right)+\E\left(\int_0^T\int_{\dom}|F|^p\dx\d s \right)\right],
\end{align}
where we have replaced the terminal data of \eqref{eq:intro}. Dropping the first term in \eqref{est:inter_f} and applying the monotone convergence theorem yield the desired result. 
\end{proof}

The following result tells us that we can improve \Cref{prop_1} by adding a supremum (in time) estimate on the left-hand side of \eqref{est:2}.

\begin{lemma}\label{prop_supremum}
Under the assumptions of \Cref{prop_1}, there is $C>0$ only depending on $p$ such that 
\begin{align}\label{est:2_mejorado}
&\E\left(\sup_{t\in[0,T]}\int_{\mathcal O}\phi_n(y(t))\right)+\E\left(\int_0^T\int_{\dom}\phi_n^{\prime\prime}(y)|\nabla y|^2\dx\d s\right)+\E\left(\int_0^T\int_{\dom}\phi_n^{\prime\prime}(y)Y^2\dx\d s\right)\\\notag
&\quad \leq C e^{CT}\E\left(\int_{\dom}|y_T|^p\dx+\int_0^T\int_{\dom}|F|^p\dx\d s \right).
\end{align}
\end{lemma}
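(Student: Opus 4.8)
The plan is to upgrade the estimate of Lemma~\ref{prop_1} by keeping the supremum-in-time of $\int_{\mathcal O}\phi_n(y(t))\dx$ on the left-hand side, starting again from the Itô identity \eqref{aft_ito} of Lemma~\ref{lem:ipp}. First I would rewrite \eqref{aft_ito} by moving the good terms to the left: for every $t\in[0,T]$,
\begin{align*}
\int_{\dom}\phi_n(y(t))\dx &+\int_t^T\int_{\dom}\phi^{\prime\prime}_n(y)|\nabla y|^2\dx\d s+\frac12\int_t^T\int_{\dom}\phi^{\prime\prime}_n(y)Y^2\dx\d s\\
&=\int_{\dom}\phi_n(y(T))\dx+\int_t^T\int_{\dom}\phi_n^{\prime}(y)(\alpha y+\beta Y+F)\dx\d s\\
&\qquad -\int_t^T\int_{\dom}\phi_n^{\prime}(y)Y\dx\d W(s),\qquad\text{a.s.}
\end{align*}
Since every term on the left is nonnegative by \eqref{eq:pos_phi_pp}, I can take a supremum over $t\in[0,T]$ and then expectation; the deterministic integral over $[t,T]$ is bounded by the same integral over $[0,T]$, so the only genuinely new object to control is $\E\big(\sup_{t\in[0,T]}\big|\int_t^T\int_{\dom}\phi_n^{\prime}(y)Y\dx\d W(s)\big|\big)$.

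For the drift terms I would reuse the pointwise bounds \eqref{prop:1}--\eqref{prop:3}, choosing $\delta$ small enough to absorb $\delta\,\phi_n^{\prime\prime}(y)|Y|^2$ into the left-hand side, which produces after taking expectations a bound of the form
\[
\E\left(\int_0^T\int_{\dom}\phi_n^{\prime}(y)(\alpha y+\beta Y+F)\dx\d s\right)\le C\,\E\left(\int_0^T\int_{\dom}|F|^p\dx\d s\right)+C\int_0^T\E\left(\int_{\dom}\phi_n(y(s))\dx\right)\d s,
\]
and the time integral on the right is already controlled by the right-hand side of \eqref{est:2_mejorado} thanks to the intermediate estimate \eqref{est:inter2} obtained in the proof of Lemma~\ref{prop_1}. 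The remaining, and principal, step is the martingale term: by the Burkholder--Davis--Gundy inequality,
\[
\E\left(\sup_{t\in[0,T]}\left|\int_t^T\int_{\dom}\phi_n^{\prime}(y)Y\dx\d W(s)\right|\right)\le C\,\E\left[\left(\int_0^T\left(\int_{\dom}\phi_n^{\prime}(y)Y\dx\right)^2\d s\right)^{1/2}\right].
\]
Here I would invoke the key algebraic inequality \eqref{prop_phi_n}, namely $|\phi_n^{\prime}(r)|^2\le 4p\,\phi_n^{\prime\prime}(r)\phi_n(r)$, together with Cauchy--Schwarz in space, to estimate
\[
\left(\int_{\dom}\phi_n^{\prime}(y)Y\dx\right)^2\le \left(\int_{\dom}|\phi_n^{\prime}(y)|^2\dx\right)\left(\int_{\dom}Y^2\dx\right)\le 4p\left(\int_{\dom}\phi_n(y)\dx\right)\left(\int_{\dom}\phi_n^{\prime\prime}(y)Y^2\dx\right),
\]
so that, pulling the supremum of $\int_{\dom}\phi_n(y(s))\dx$ out of the time integral and using $\sqrt{ab}\le \epsilon a+\tfrac{1}{4\epsilon}b$,
\[
\E\left[\left(\int_0^T\left(\int_{\dom}\phi_n^{\prime}(y)Y\dx\right)^2\d s\right)^{1/2}\right]\le \epsilon\,\E\left(\sup_{t\in[0,T]}\int_{\dom}\phi_n(y(t))\dx\right)+\frac{C}{\epsilon}\,\E\left(\int_0^T\int_{\dom}\phi_n^{\prime\prime}(y)Y^2\dx\d s\right).
\]

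Choosing $\epsilon$ small, the $\sup$-term is absorbed into the left-hand side, and the $\int_0^T\int_\dom \phi_n^{\prime\prime}(y)Y^2$ term is already bounded by the right-hand side of \eqref{est:2_mejorado} via Lemma~\ref{prop_1}; replacing $\phi_n(y(T))$ by $|y_T|^p$ using \eqref{eq:p_bounds} then yields \eqref{est:2_mejorado}. The main obstacle is precisely this martingale estimate: one must not bound $\int_{\dom}\phi_n^{\prime}(y)Y\dx$ crudely by $\phi_n$ alone (that would lose the coercive $\phi_n^{\prime\prime}(y)Y^2$ factor needed for the Grönwall-type absorption), but instead split it through \eqref{prop_phi_n} so that half the weight feeds the $\sup$-term and half feeds the already-controlled quadratic variation term — exactly the mechanism behind the $\dareiotis$-type argument in \cite{dareiotis2015}. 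All constants remain independent of $n$, so a final application of the monotone convergence theorem (using \eqref{prop:convr}) is available if one later wishes to pass $n\to\infty$, though for this lemma the uniform-in-$n$ bound \eqref{est:2_mejorado} is the statement.
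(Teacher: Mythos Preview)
Your overall strategy matches the paper's: start from the pathwise identity \eqref{aft_ito}, control the drift via \eqref{prop:1}--\eqref{prop:3}, take a supremum in $t$, apply Burkholder--Davis--Gundy, and close by absorbing $\epsilon\,\E\big(\sup_t\int_{\dom}\phi_n(y)\big)$ on the left while bounding $\E\int_0^T\int_{\dom}\phi_n^{\prime\prime}(y)Y^2$ with Lemma~\ref{prop_1}. The only structural difference is that the paper first applies a pathwise backward Gr\"onwall inequality before taking the supremum, whereas you handle the $\int_0^T\E\int_{\dom}\phi_n(y)\,\d s$ term by invoking the already proved bound \eqref{est:inter2}; both routes are valid.

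There is, however, a genuine error in your chain of inequalities for the quadratic variation. You write
\[
\left(\int_{\dom}\phi_n^{\prime}(y)Y\dx\right)^2\le \left(\int_{\dom}|\phi_n^{\prime}(y)|^2\dx\right)\left(\int_{\dom}Y^2\dx\right)\le 4p\left(\int_{\dom}\phi_n(y)\dx\right)\left(\int_{\dom}\phi_n^{\prime\prime}(y)Y^2\dx\right).
\]
The first inequality is Cauchy--Schwarz, but the second one is false in general: from \eqref{prop_phi_n} you only get $\int_{\dom}|\phi_n^{\prime}(y)|^2\dx\le 4p\int_{\dom}\phi_n^{\prime\prime}(y)\phi_n(y)\dx$, and there is no way to factor this into a product of separate integrals of $\phi_n(y)$ and of $\phi_n^{\prime\prime}(y)$, nor to transfer the weight $\phi_n^{\prime\prime}(y)$ into the $Y^2$ integral. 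The correct argument (which is what the paper does) is to use \eqref{prop_phi_n} \emph{pointwise before} applying Cauchy--Schwarz: write $|\phi_n^{\prime}(y)Y|\le 2\sqrt{p}\,\phi_n(y)^{1/2}\big(\phi_n^{\prime\prime}(y)\big)^{1/2}|Y|$ and then apply Cauchy--Schwarz in $x$ to the product $\phi_n(y)^{1/2}\cdot\big(\phi_n^{\prime\prime}(y)Y^2\big)^{1/2}$, giving
\[
\left(\int_{\dom}\phi_n^{\prime}(y)Y\dx\right)^2\le 4p\left(\int_{\dom}\phi_n(y)\dx\right)\left(\int_{\dom}\phi_n^{\prime\prime}(y)Y^2\dx\right),
\]
which is exactly the bound you need. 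With this correction your proof goes through.
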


\begin{proof}
Using as a starting point identity \eqref{aft_ito} of \Cref{lem:ipp}, we use \eqref{prop:1}--\eqref{prop:3} to deduce that almost surely
\begin{align*}
\int_{\dom}&\phi_n(y(t))\dx+\int_t^T\int_{\dom}\phi^{\prime\prime}_n(y)|\nabla y|^2\dx\d s+\int_t^T\int_{\dom}\phi^{\prime\prime}_n(y)Y^2\dx\d{s}\\
&\leq C\left(\int_{\dom}|y_T|^p\dx+\int_t^{T}\int_{\dom}|F|^p\dx \d s+\int_t^{T}\int_{\dom}\phi_n(y)\dx\d s\right)-\int_t^T\int_{\dom}\phi_n^{\prime}(y)Y\dx\d W(s),
\end{align*}
for any $t\in[0,T]$ and some $C>0$ only depending on $p$, $\|\alpha\|_{L^\infty_{\fil}(0,T;L^\infty(\dom))}$ and $\|\beta\|_{L^\infty_{\fil}(0,T;L^\infty(\dom))}$. Hence
\begin{align}\notag
&\int_{\dom}\phi_n(y(t))\dx \leq C\left(\int_{\dom}|y_T|^p\dx+\int_0^{T}\int_{\dom}|F|^p\dx \d s\right)+\sup_{t\in[0,T]}\left|\int_t^T\int_{\dom}\phi_n^{\prime}(y)Y\dx\d W(s)\right|\\\label{bef:gronw}
&+C\int_t^{T}\int_{\dom}\phi_n(y)\dx\d{s}, \quad \text{ a.s.}
\end{align}

From the backward Grönwall inequality (see \Cref{lem:back_gron}) and taking supremum in $[0,T]$, we have that a.s.
\begin{align}\notag
\sup_{t\in[0,T]}&\int_{\dom}\phi_n(y(t))\dx\\ \label{bef:BDG}
&\leq Ce^{CT}\left(\int_{\dom}|y_T|^p\dx+\int_0^T\int_{\dom}|F|^p\dx\d s+2\sup_{t\in[0,T]}\left|\int_0^t\int_{\dom}\phi_n^{\prime}(y)Y\dx dW(s)\right|\right),
\end{align}
where we have used that
\begin{align}\notag 
\sup_{t\in[0,T]}&\left|\int_t^T\int_{\dom}\phi_n^{\prime}(y)Y\dx dW(s)\right| \\ \notag 
&\leq \left|\int_0^{T}\int_{\mathcal O}\phi_n^\prime(y)Y\dx\d{W}(s)\right|+\sup_{t\in[0,T]} \left|\int_0^{t}\int_{\mathcal O}\phi_n^\prime(y)Y\dx\d{W}(s)\right| \\ \label{lylys_trick}
& \leq 2 \sup_{t\in[0,T]} \left|\int_0^{t}\int_{\mathcal O}\phi_n^\prime(y)Y\dx\d{W}(s)\right|.
\end{align}

We take expectation in \eqref{bef:BDG} and apply Burkholder-Davis-Gundy inequality to get
\begin{align}\notag
&\E\left(\sup_{t\in[0,T]}\int_{\dom}\phi_n(y(t))\dx\right)\\\label{aft:BDG}
&\leq Ce^{CT} \left[\E\left(\int_{\dom}|y_T|^p\dx+\int_0^T\int_{\dom}|F|^p\dx\d s\right)+\E\left(\left|\int_0^T\left(\int_{\dom}\phi_n^{\prime}(y)Y\dx\right)^2\d s\right|^{1/2}\right)\right].
\end{align}
To estimate the last term in the above inequality we note that from \eqref{prop_phi_n}
\begin{equation*}
|\phi_n^{\prime}(y)Y|\leq 2 |Y| \phi_n^{\prime\prime}(y) ^{1/2} \phi_n(y)^{1/2} \quad\text{a.s.}
\end{equation*}
and applying H\"older and Young inequalities we have that there exists a numerical constant $C>0$ such that
\begin{align}\notag
&\E\left(\left|\int_0^T\left(\int_{\dom}\phi_n^{\prime}(y)Y\dx\right)^2\d s\right|^{1/2}\right)\\\label{aux:prp_1}
&\leq \delta \E\left(\sup_{s\in [0,T]}\int_{\dom}\phi_n(y)\dx\right)+\frac{C}{\delta}\E\left(\int_0^T\int_{\dom}|Y|^2|\phi_n^{\prime\prime}(y)|\dx\d s\right),
\end{align}
for any $\delta\in(0,1)$. Replacing \eqref{aux:prp_1} in \eqref{aft:BDG} and taking $\delta$ small enough we get
\begin{align}\notag
\E&\left(\sup_{t\in[0,T]}\int_{\dom}\phi_n(y(t))\dx\right)\\
&\leq Ce^{CT}\E \left(\int_{\dom}|y_T|^p\dx+\int_0^T\int_{\dom}|F|^p\dx\d s +  \int_0^T\int_{\dom}|Y|^2|\phi_n^{\prime\prime}(y)|\dx\d s \right). \label{aux:prop_2}
\end{align}
Finally, putting together estimate \eqref{est:2} of Lemma \ref{prop_1} and inequality \eqref{aux:prop_2} we obtain the desired result. This ends the proof.
\end{proof}

\subsection{Proof of the Ito's formula for the $L^p$-norm}\label{sec_ito_proof}

Now we are in position to prove the Ito's formula, that is, \Cref{prop:ito}.

\begin{proof}[Proof of \Cref{prop:ito}] Estimate \eqref{avoc}, follows from \eqref{prop:convr}, estimate \eqref{est:2_mejorado} in \Cref{prop_supremum} and a straightfoward application of Fatou's Lemma. This proves the first assertion of \Cref{thm:ito}.

To prove the It\^{o}'s formula \eqref{itosfor}, we proceed as follows. From \Cref{lem:ipp}, we have that the following identity holds
\begin{align}\notag
\int_{\dom}&\phi_n(y(t))\dx+\int_t^T\int_{\dom}\phi^{\prime\prime}_n(y)|\nabla y|^2\dx\d s + \frac{1}{2}\int_t^T\int_{\dom}\phi^{\prime\prime}_n(y)Y^2 \dx\d s\\ \label{ito_n}
& = \int_{\dom}\phi_n(y(T))\dx+\int_t^T\int_{\dom} \phi^{\prime}_n(y)(\alpha y+\beta Y+F) \dx\d s-\int_t^T\int_{\dom}\phi_n^{\prime}(y)Y\dx\d W(s) \quad \text{ a.s.,}
\end{align}
for all $t\in[0,T]$. We will see that each term in \eqref{ito_n} converges to the corresponding one in \eqref{itosfor} as $n\to \infty$. At this point, we will make use of estimate \eqref{avoc}.

From estimate \eqref{avoc}, we readily deduce that
\begin{equation}\label{eq:est_f_as}
\sup_{t\in[0,T]} \|y(t)\|^p_{L^p(\dom)}<+\infty, \quad \textnormal{a.s.}
\end{equation}
Then, by \eqref{eq:p_bounds}, \eqref{prop:convr}, \eqref{eq:est_f_as}, and dominated convergence theorem we obtain that 
\begin{align*}
\int_{\dom}\phi_n(y(t))\dx&\to \int_{\dom}|y(t)|^p\dx\quad \text{ a.s. for all $t\in[0,T]$}.
\end{align*}
This proves the convergence of the first terms in both sides of identity \eqref{ito_n}. 

Now we note from \eqref{eq:p_bounds} that a.s.
\begin{align*}
|\phi_n^{\prime}(y)F|\leq N|y|^{p-1}|F|
\end{align*} 
and, moreover, by succesive applications of H\"older inequality we have  for all $t\in[0,T]$ 
\begin{align*}
\int_t^T\int_{\dom} |y|^{p-1}|F|\dx\d s&\leq  \|y\|_{L^p((t,T];L^{p}(\dom))}^{p-1} \|F\|_{L^p((t,T];L^{p}(\dom))} < + \infty \quad \text{ a.s.},
\end{align*}
where the boundedness follows from \eqref{eq:est_f_as} and the regularity of $F$. Therefore, dominated convergence theorem together with \eqref{prop:convr} imply that
\begin{align*}
\int_t^T\int_{\dom}\phi_n^{\prime}(y)F\dx\d s\to p\int_t^T\int_{\dom}|y|^{p-2}yF\dx\d s \quad \text{ a.s. for all $t\in[0,T]$.}
\end{align*}

Similar arguments using the energy estimate \eqref{avoc} yield that
\begin{align*}
\int_t^T\int_{\dom}\phi_n^{\prime\prime}(y)|\nabla y|^2\dx\d s&\to p(p-1)\int_t^T\int_{\dom}|y|^{p-2}|\nabla y|^2\dx\d{s} \quad \text{ a.s.,} \\
\frac{1}{2}\int_t^T\int_{\dom}\phi_n^{\prime\prime}(y)Y^2\dx\d s&\to \frac{p(p-1)}{2}\int_t^T\int_{\dom}|y|^{p-2}|Y|^2\dx\d{s} \quad \text{ a.s.,}  \\
\int_t^T\int_{\dom}\phi_n^{\prime}(y)\alpha y \dx\d s&\to p \int_t^T\int_{\dom}\alpha |y|^{p}  \dx\d{s} \quad \text{ a.s.,} \\
\int_t^T\int_{\dom}\phi_n^{\prime}(y)\beta Y \dx\d s&\to p \int_t^T\int_{\dom}|y|^{p-2}\beta y Y\dx\d{s} \quad \text{ a.s.,}  
\end{align*}
for all $t\in[0,T]$. 

Finally, for the term $\int_t^T\int_{\dom}\phi_n^{\prime}(y)Y \dx \d W(s)$, first we will prove that
\begin{align}\label{bfr:dct}
\int_{t}^{T}\left|\langle \phi_n^{\prime}(y),Y\rangle_{L^2(\dom)}-\left\langle p|y|^{p-2}y,Y\right\rangle_{L^2(\dom)}\right|^2\d s\to 0 \quad \text{ a.s. for all $t\in[0,T]$}.
\end{align}
For this, we recall from \eqref{eq:p_bounds} that $|\phi^{\prime}_n(r)|\leq N|r|^{p-1}$ and by Cauchy-Schwarz inequality
\begin{align*}
\left|\langle \phi_n^{\prime}(y),Y\rangle_{L^2(\dom)}-\langle p|y|^{p-2}y,Y\rangle_{L^2(\dom)}\right|^2 & \leq C \left(\int_{\dom}|y|^{p-2}|Y|^2\dx\right) \left(\int_{\dom}|y|^{p}\dx\right)  \quad \textnormal{a.s.}
\end{align*}
On the other hand, by estimate \eqref{avoc}, we have
\begin{align*}
\int_{t}^{T}& \left(\int_{\dom}|y|^{p-2}|Y|^2\dx\right) \left(\int_{\dom}|y|^{p}\dx\right) \d{s} \\
&\leq C\left(\sup_{s\in[t,T]}\int_{\dom}|y(s)|^{p}\dx\right)\left(\int_t^T\int_{\dom}|y|^{p-2}|Y|^2\dx\d{s}\right), \quad \\
&<+\infty, \quad \text{ a.s. for all $t\in[0,T]$.}
\end{align*}
From \eqref{prop:convr}, the continuity of the $L^2$-inner product, and dominated convergence theorem allow us to conclude \eqref{bfr:dct} and therefore 
\begin{align*}
\int_t^T\int_{\dom}\phi_n^{\prime}(y)Y\d W(s)\to p\int_t^T\int_{\dom}|y|^{p-2}yY\d W(s)\;\;\text{ a.s. for all $t\in[0,T]$.}
\end{align*}
The proof is complete. 
\end{proof}

\subsection{Extra regularity for the process $Y$ and proof of \Cref{thm:ito}}
The goal of this part is to establish extra regularity for the process $Y$.

\begin{proposition}\label{prop:extra_regularity_Y}
Let $(y,Y)$ be a weak solution to \eqref{eq:intro} and assume that $y_T\in L^p_{\mathcal F_T}(\Omega;L^p(\dom))$ and $F\in L^{p}_{\fil}(0,T;L^p(\mathcal O))$ for some $p\in(2,+\infty)$. Then $Y\in L^p_{\fil}(\Omega;L^2(0,T;L^2(\dom)))$ and there is a constant $C>0$ only depending on $\dom$, $\alpha$, $\beta$, and $p$ such that
\begin{equation}\label{est:final_impr_Y}
\E\left[\left(\int_0^{T}\|Y(t)\|_{L^2(\dom)}^2\dt\right)^{p/2}\right]\leq Ce^{CT}\E\left(\|y_T\|^p_{L^p(\dom)}+\int_{0}^T\|F(t)\|_{L^p(\dom)}^p\dt\right).
\end{equation}
\end{proposition}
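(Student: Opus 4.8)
The plan is to start from the Itô formula \eqref{itosfor} established in \Cref{prop:ito} and read it as an identity for the quantity $\int_\dom |y(t)|^p\dx$, isolating the good term $\frac{p(p-1)}{2}\int_t^T\int_\dom |y|^{p-2}|Y|^2\dx\d s$ on the left. Taking $t=0$ and rearranging, the stochastic integral $M:=p\int_0^T\int_\dom |y|^{p-2}yY\,\d W(s)$ is the only term carrying the martingale, and its quadratic variation is $\langle M\rangle_T = p^2\int_0^T\left(\int_\dom |y|^{p-2}yY\dx\right)^2\d s$. The trick is then to apply the Burkholder–Davis–Gundy inequality \emph{not} to control a supremum but to bound $\E|M_T|$ (indeed $\E\sup_{t}|M_t|$) by $C\,\E\langle M\rangle_T^{1/2}$, and to notice that by Cauchy–Schwarz in space, $\left(\int_\dom |y|^{p-2}yY\dx\right)^2 \le \left(\int_\dom |y|^p\dx\right)\left(\int_\dom |y|^{p-2}Y^2\dx\right)$, so that
\[
\langle M\rangle_T^{1/2}\le C\Big(\sup_{t\in[0,T]}\|y(t)\|_{L^p(\dom)}^p\Big)^{1/2}\Big(\int_0^T\!\!\int_\dom |y|^{p-2}Y^2\dx\,\d s\Big)^{1/2}.
\]
By Young's inequality this splits into $\delta\,\E\sup_t\|y(t)\|_{L^p}^p + \frac{C}{\delta}\,\E\int_0^T\!\!\int_\dom|y|^{p-2}Y^2\dx\,\d s$, and both quantities on the right are already controlled by the right-hand side of \eqref{avoc}. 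Hence from \eqref{itosfor} we would obtain, after absorbing the $\delta$-terms, the intermediate estimate
\[
\E\!\left(\int_0^T\!\!\int_\dom |y|^{p-2}Y^2\dx\,\d s\right)\le Ce^{CT}\,\E\!\left(\int_\dom |y_T|^p\dx + \int_0^T\!\!\int_\dom |F|^p\dx\,\d s\right),
\]
which is nothing but a restatement of part of \eqref{avoc}; the real work is to pass from this weighted bound to the unweighted one \eqref{est:final_impr_Y}.

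To get \eqref{est:final_impr_Y} I would use Hölder's inequality in the spatial variable to remove the degenerate weight $|y|^{p-2}$. Writing, for fixed $(\omega,s)$,
\[
\int_\dom |Y|^2\dx = \int_\dom |y|^{-\frac{2(p-2)}{p}}\,|y|^{\frac{2(p-2)}{p}}\,|Y|^2\dx,
\]
this is not directly integrable because of the negative power of $y$. The cleaner route is instead to interpolate: by Hölder with exponents $\frac{p}{p-2}$ and $\frac{p}{2}$ (valid since $p>2$),
\[
\int_\dom |Y|^2\dx = \int_\dom |Y|^{2\cdot\frac{2}{p}}\cdot |Y|^{2\cdot\frac{p-2}{p}}\dx,
\]
which still does not separate $y$ and $Y$ correctly. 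The genuinely working decomposition is: for $p>2$, write $|Y|^2 = \big(|y|^{p-2}|Y|^2\big)^{\frac{2}{p}}\cdot |Y|^{2-\frac{4}{p}}\cdot |y|^{-\frac{2(p-2)}{p}}$ — again there is a $y$ in the denominator. So the honest approach is the following: one does \emph{not} try to bound $\|Y\|_{L^2(\dom)}^2$ pointwise in $s$ by the weighted quantity; rather one returns to the weak formulation of \eqref{eq:intro} and uses it as an equation for $Y$ directly, exploiting that $Y\d W$ is the martingale part, so that by the BDG inequality applied to the $L^2(\dom)$-valued process $y$,
\[
\E\Big[\Big(\int_0^T\|Y(s)\|_{L^2(\dom)}^2\d s\Big)^{p/2}\Big] = \E\big[\langle y\rangle_T^{p/2}\big]\le C\,\E\big[\,\|y\|_{C([0,T];L^2(\dom))}^p\,\big] + (\text{drift terms}),
\]
where the drift terms involve $\Delta y$, $\alpha y$, $\beta Y$, $F$ and are handled by the $L^p$-regularity of $y$ from \Cref{thm:ito} together with the already-proved bound on $\int_0^T\!\!\int_\dom|y|^{p-2}Y^2$. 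This reduces \eqref{est:final_impr_Y} to controlling $\E\|y\|_{C([0,T];L^2(\dom))}^p$, which follows from $\E\sup_t\|y(t)\|_{L^p(\dom)}^p$ via the embedding $L^p(\dom)\hookrightarrow L^2(\dom)$ (bounded domain), i.e. from \eqref{avoc}, and to absorbing the $\beta Y$ term, for which one needs $\E\big(\int_0^T\|Y\|_{L^2(\dom)}^2\d s\big)^{p/2}$ to already be finite — a smallness/bootstrap argument in $T$, or splitting $[0,T]$ into short subintervals, closes this loop.

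The main obstacle, therefore, is the circularity in estimating the $\beta Y$ contribution and, more fundamentally, transferring the weighted spatial integral $\int_\dom |y|^{p-2}Y^2$ into the unweighted $\int_\dom Y^2$ without losing control where $y$ is small. I expect the resolution to be: first prove the result assuming $\beta\equiv 0$ (then the $\beta Y$ drift term is absent and the martingale-moment estimate closes immediately), and then reintroduce $\beta$ by a fixed-point / Grönwall-type absorption, using that on a sufficiently short time interval the coefficient $\|\beta\|_\infty$ times the length is small enough to be absorbed, and patching finitely many such intervals. Alternatively — and this is likely what the paper does — one squares the weak equation tested cleverly or uses the maximal inequality directly on $\|y(\cdot)\|_{L^2(\dom)}^2$ as an Itô process whose martingale part has bracket exactly $4\int_0^\cdot \|Y\|_{L^2(\dom)}^2\d s$ (up to the $\langle \cdot\rangle$ normalization), so that BDG gives a lower bound $\E\langle\cdot\rangle_T^{p/2}\lesssim \E\sup|y|^p + \E\,(\text{drift})^{p/2}$, and all drift terms are $L^p$-in-time, hence controlled by \Cref{thm:ito} and \eqref{avoc}. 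I would write the argument in that order: (i) Itô for $\|y(t)\|_{L^2(\dom)}^2$; (ii) identify the bracket of the martingale part as $\int_0^\cdot\|Y\|_{L^2}^2\,\d s$; (iii) BDG to bound its $p/2$-moment from above by $\E\sup_t\|y(t)\|_{L^2}^p$ plus $\E$ of the $p/2$-power of the time-integrated drift; (iv) estimate each drift term — $\int_0^T(\nabla y,\nabla y)$, $\int_0^T\|\alpha y\|\|y\|$, $\int_0^T\|\beta Y\|\|y\|$, $\int_0^T\|F\|\|y\|$ — using Hölder in time, \Cref{thm:ito}, and \eqref{avoc}, absorbing the $\|\beta Y\|$ term by Young into the left-hand side (possible after first establishing finiteness, e.g. by the truncation $\phi_n$ already in hand); (v) conclude \eqref{est:final_impr_Y}.
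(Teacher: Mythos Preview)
Your final plan --- apply It\^o to $\|y(t)\|_{L^2(\dom)}^2$, then raise to the $p/2$ power and use BDG --- is exactly the route the paper takes, and the long detour through the weighted integral $\int_\dom|y|^{p-2}Y^2$ is unnecessary. But there is a concrete confusion in how you read off $\int_0^T\|Y\|_{L^2(\dom)}^2\,\d s$ from the identity. The martingale part of the real-valued process $\|y(t)\|_{L^2}^2$ is $-2\int_0^t(y,Y)_{L^2}\,\d W$, whose bracket is $4\int_0^t(y,Y)_{L^2}^2\,\d s$, \emph{not} $4\int_0^t\|Y\|_{L^2}^2\,\d s$; so BDG on that martingale does not produce the quantity you want. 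The term $\int_t^T\|Y\|_{L^2}^2\,\d s$ appears instead as the It\^o correction term (the $\tfrac12\phi''|Y|^2$ contribution in \eqref{itosfor} with $p=2$), sitting on the \emph{left} of the identity with a positive sign, alongside $2\int_t^T\|\nabla y\|_{L^2}^2\,\d s$. This is the key: you keep the $Y$ term, \emph{drop} the gradient term (it has the good sign), and there is nothing to estimate for $\nabla y$. Your step (iv) lists $\int_0^T(\nabla y,\nabla y)$ among the ``drift terms'' to be bounded, but $\E\big[(\int_0^T\|\nabla y\|_{L^2}^2\,\d s)^{p/2}\big]$ is not furnished by \eqref{avoc} or \Cref{thm:ito}, and you do not need it.

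Once the identity is written correctly --- $\int_t^T\|Y\|_{L^2}^2\,\d s \le \|y_T\|_{L^2}^2 + 2\int_t^T\!\!\int_\dom y(\alpha y+\beta Y+F)\,\d x\,\d s - 2\int_t^T(y,Y)_{L^2}\,\d W$ --- you bound $\|y_T\|_{L^2}^2\le\sup_s\|y(s)\|_{L^2}^2$, take the $p/2$ power, and apply BDG only to control the stochastic integral on the right. The $\beta Y$ term is then handled directly: Cauchy--Schwarz in space-time gives $\big(\int_0^T\!\!\int_\dom|\beta yY|\big)^{p/2}\le C\big(\int_0^T\|y\|_{L^2}^2\big)^{p/4}\big(\int_0^T\|Y\|_{L^2}^2\big)^{p/4}$, and one application of Young with a small parameter absorbs $\delta\,\E\big[(\int_0^T\|Y\|_{L^2}^2)^{p/2}\big]$ into the left-hand side. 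No bootstrap, no time-splitting, and no prior finiteness assumption is needed beyond what \eqref{avoc} already provides (finiteness for the BDG step follows from \eqref{avoc} via the same Cauchy--Schwarz bound). The remaining terms all reduce to $\E\sup_s\|y(s)\|_{L^2}^p$ and $\E\int_0^T\|F\|_{L^2}^p$, which are dominated by the right-hand side of \eqref{est:final_impr_Y} via the embedding $L^p(\dom)\hookrightarrow L^2(\dom)$ and \eqref{avoc}.
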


\begin{proof}
Let $(y,Y)$ be a weak solution to \eqref{eq:intro}. Along the proof, $C$ stands for a generic positive constant depending at most on $\mathcal O$, $\|\alpha\|_{L^\infty_{\fil}(0,T;L^\infty(\dom))}$, $\|\beta\|_{L^\infty_{\fil}(0,T;L^\infty(\dom))}$, and $p$ but which is uniform with respect to $T$. This constant may change from line to line.

Using formula \eqref{itosfor} in \Cref{prop:ito} with $p=2$ (which amounts to the usual Ito's formula) and dropping some of the positive terms we have
\begin{equation*}
\int_{t}^{T}\int_{\dom}|Y|^2\dx\d{s}\leq \int_{\dom}|y_T|^2\dx+2\int_{t}^{T}\int_{\dom} y(\alpha y+\beta Y+F) \dx\d{s}-2\int_{t}^{T}\int_{\dom} y Y \dx \d{W}(s), \quad \textnormal{a.s.}
\end{equation*}
for any $t\in[0,T]$. Since $y\in L^2_{\fil}(\Omega; C([0,T];L^2(\dom)))$, then $\int_{\dom}|y_T|^2\leq \sup_{s\in[0,T]}\|y(s)\|^2_{L^2(\dom)}$ a.s. From this, and taking the power $p/2$ with $p>2$ in both sides of the above inequality, we deduce
\begin{align}\notag 
&\left(\int_{t}^{T}\int_{\dom}|Y|^2\dx\d{s}\right)^{p/2}  \\ \notag
&\; \leq C \left( \sup_{s\in [0,T]}\|y(s)\|_{L^2(\dom)}^{p}+\left(\int_{0}^{T}\int_{\dom} |\alpha||y|^2\dx\d{s}\right)^{p/2}+\left(\int_{0}^{T}\int_{\dom}|\beta y Y|\dx\d{s}\right)^{p/2}\right)
\\ \label{ito_ineq}
&\quad +C\left(\left(\int_0^{T}\int_{\dom}|yF|\dx\d{s}\right)^{p/2}+\left|\int_{t}^{T}\int_{\dom} y Y \dx \d{W}(s)\right|^{p/2}\right),
\end{align}
almost surely for any $t\in[0,T]$.

Let us estimate the expectation of the last four terms in the right-hand side of \eqref{ito_ineq}. For the first one, from H\"older inequality and since $\alpha$ verifies \ref{Hy}, we readily deduce
\begin{equation}\label{est:nt1}
\E\left[\left(\int_{0}^{T}\int_{\dom} |\alpha||y|^2\dx\d{s}\right)^{p/2}\right]\leq C T^{p/2} \E\left(\sup_{s\in[0,T]}\|y(s)\|_{L^2(\dom)}^{p}\right).
\end{equation}
%

For the second one, recalling \ref{Hy} and using Cauchy-Schwarz inequality successively, we have
\begin{align}\notag
\left(\int_{0}^{T}\int_{\dom}|\beta y Y|\dx\d{s}\right)^{p/2}&\leq C \left(\int_0^{T}\|y(s)\|^2_{L^2(\dom)}\d{s}\right)^{p/4}\left(\int_0^{T}\|Y(s)\|^2_{L^2(\dom)}\d{s}\right)^{p/4}  \quad\textnormal{a.s.}
\end{align}
%
Therefore, taking expectation and using Cauchy-Schwarz and Young inequalities we obtain
\begin{align}\notag
&\E\left[\left(\int_{0}^{T}\int_{\dom}|\beta y Y|\dx\d{s}\right)^{p/2}\right]\\ \notag
&\qquad \leq \delta \E\left[\left(\int_0^{T}\|Y(s)\|^2_{L^2(\dom)}\d{s}\right)^{p/2}\right]+\frac{C}{\delta} \E\left[\left(\int_0^{T}\|y(s)\|^2_{L^2(\dom)}\d{s}\right)^{p/2}\right]
\end{align}
for any $\delta>0$.
 Using H\"older inequality in the last term, we can further estimate 
\begin{align}\notag
&\E\left[\left(\int_{0}^{T}\int_{\dom}|\beta y Y|\dx\d{s}\right)^{p/2}\right]\\ \label{est:nt2}
&\qquad \leq \delta \E\left[\left(\int_0^{T}\|Y(s)\|^2_{L^2(\dom)}\d{s}\right)^{p/2}\right]+\frac{CT^{p/2}}{\delta} \E\left(\sup_{s\in[0,T]}\|y(s)\|^p_{L^2(\dom)}\right).
\end{align}

To estimate the third term in the right-hand side of \eqref{ito_ineq}, we proceed as follows. Using Cauchy-Schwarz and H\"older inequalities, we get
\begin{align*}
\left(\int_{0}^{T}\int_{\dom}|yF|\dx\d{s}\right)^{p/2}\leq \sup_{s\in[0,T]}\|y(s)\|^{p/2}_{L^2(\dom)}\left(\int_0^{T}\|F(s)\|_{L^2(\dom)}\d{s}\right)^{p/2}, \quad \textnormal{a.s.}
\end{align*}
Taking expectation and using Cauchy-Schwarz and Young inequalities we obtain
\begin{align}\label{est:t1}
\E\left[\left(\int_{0}^{T}\int_{\dom}|yF|\dx\d{s}\right)^{p/2} \right] \leq \frac{1}{2}\E\left(\sup_{s\in[0,T]}\|y(s)\|^{p}_{L^2(\dom)}\right)+\frac{1}{2}\E\left[\left(\int_0^{T}\|F(s)\|_{L^2(\dom)}\d{s}\right)^{p}\right].
\end{align}

To estimate the last term in \eqref{ito_ineq}, arguing as we did in \eqref{lylys_trick}, we have that 
\begin{equation*}
\sup_{t\in[0,T]} \left|\int_{t}^{T}\int_{\dom} y Y \dx \d{W}(s)\right|^{p/2}\leq 2 \sup_{t\in[0,T]} \left|\int_{0}^{t}\int_{\dom} y Y \dx \d{W}(s)\right|^{p/2}, 
\end{equation*}
whence, by Burkholder-Davis-Gundy inequality, we deduce that
\begin{equation}\label{eq:est_abdg}
\E \left[\left|\int_{t}^{T}\int_{\dom} y Y \dx \d{W}(s)\right|^{p/2}\right] \leq C \E\left[\left|\int_{0}^{T}\left(\int_{\dom}yY\dx\right)^2\d{s}\right|^{p/4}\right].
\end{equation}
%
Using Cauchy-Schwarz and H\"older inequalities we can  estimate the right-hand side of \eqref{eq:est_abdg} as
\begin{align}\notag
\E \left[\left|\int_{t}^{T}\int_{\dom} y Y \dx \d{W}(s)\right|^{p/2}\right] 
& \leq C\E\left[\sup_{s\in[0,T]}\|y(s)\|_{L^2(\dom)}^{p/2}\left|\int_{0}^{T}\|Y(s)\|^2_{L^2(\dom)}\d{s}\right|^{p/4}\right],
\end{align}
and from Young inequality, we deduce that for any $\delta>0$
\begin{align}\label{est:t2}
\E \left[\left|\int_{t}^{T}\int_{\dom} y Y \dx \d{W}(s)\right|^{p/2}\right] 
& \leq \delta \E\left[\left|\int_{0}^{T}\|Y(s)\|^2_{L^2(\dom)}\d{s}\right|^{p/2}\right]+\frac{C}{\delta}\E\left(\sup_{s\in[0,T]}\|y(s)\|_{L^2(\dom)}^{p}\right).
\end{align}

Taking expectation in \eqref{ito_ineq} and putting together with \eqref{est:nt1}, \eqref{est:nt2}, \eqref{est:t1}, and \eqref{est:t2} we have that for any $t\in[0,T]$ the following holds
\begin{align*}\notag 
&\E\left[\left(\int_{t}^{T}\int_{\dom}|Y|^2\dx\d{s}\right)^{p/2}\right]  \\ \notag
&\quad \leq C\left(1+\frac{2T^{p/2}}{\delta}\right) \E \left( \sup_{s\in [0,T]}\|y(s)\|_{L^2(\dom)}^{p}\right)+2\delta \E\left[\left(\int_{0}^{T}\|Y(s)\|^2_{L^2(\dom)}\d{s}\right)^{p/2}\right]\\
&\qquad +\frac{1}{2}\E\left[\left(\int_0^{T}\|F(s)\|_{L^2(\dom)}\d{s}\right)^{p}\right].
\end{align*}
In particular, taking $t=0$ in the above estimate and setting $\delta>0$ small enough yields
\begin{align}\notag 
&\E\left[\left(\int_{0}^{T}\|Y(s)\|^2_{L^2(\dom)}\d{s}\right)^{p/2}\right] \\ \label{eq:est_minimum}
&\quad \leq C(1+T^{p/2}) \E \left( \sup_{s\in [0,T]}\|y(s)\|_{L^2(\dom)}^{p}\right)+ C\E\left[\left(\int_0^{T}\|F(s)\|_{L^2(\dom)}\d{s}\right)^{p}\right].
\end{align}
%

To conclude, using H\"older inequality and since $\|z\|^p_{L^2(\dom)} \leq |\dom|^{\frac{p-2}{2}}\|z\|^p_{L^p(\dom)}$ for any $z\in L^p(\dom)$, we obtain
\begin{align}\notag 
&\E\left[\left(\int_{0}^{T}\|Y(s)\|_{L^2(\dom)}^2\d{s}\right)^{p/2}\right] \\ \notag
&\quad  \leq C(1+T^{p/2}) \E \left( \sup_{s\in [0,T]}\|y(s)\|_{L^2(\dom)}^{p}\right)+ CT^{p-1}\E\left(\int_0^{T}\|F(s)\|_{L^2(\dom)}^p\d{s}\right)\\
\label{est:improved_Y} 
& \quad  \leq C^\prime e^{C^\prime T} \E \left(  \sup_{s\in [0,T]}\|y(s)\|_{L^p(\dom)}^{p} + \int_0^{T}\|F(s)\|_{L^p(\dom)}^p\d{s}\right),
\end{align}
for some constant $C^\prime>0$ only depending on $\mathcal O$, $\alpha$, $\beta$, and $p$. To conclude, we use estimate \eqref{avoc} in \eqref{est:improved_Y} to obtain the desired inequality \eqref{est:final_impr_Y}. This ends the proof.
\end{proof}

\begin{proof}[Proof of \Cref{thm:ito}]
The result follows directly from \Cref{prop:ito} and \Cref{prop:extra_regularity_Y}.
\end{proof}

\section{Uniform $L^p$- and $L^\infty$-estimates }\label{sec:estimates}
The goal of this section is to show \Cref{thm:infinity}. In the first step towards \Cref{thm:infinity}, we will use It\^{o}'s formula for the $L^p$-norm (see \Cref{prop:ito}) to obtain a precise uniform estimate for the solution to \eqref{eq:intro}. The result reads as follows. 

\begin{proposition}\label{prop:linf_uniform}
Let $(y,Y)$ be a weak solution to \eqref{eq:intro}. Assume that $y_T\in L^p_{\mathcal F_T}(\Omega;L^p(\dom))$ and $F\in L^{p}_{\fil}(0,T;L^p(\mathcal O))$. Then the process $y$ taken from the solution $(y,Y)$ to \eqref{eq:intro} satisfies
\begin{equation*}
\norme{y}_{L^\infty_{\fil}(0,T;L^p(\Omega;L^p(\mathcal O)))} \leq \exp (C T ) \left( \norme{y_T}_{L^{p}_{\mathcal F_T}(\Omega;L^p(\mathcal O))} + \norme{F}_{L^{p}_{\fil}(0,T;L^p(\mathcal O))}\right),
\end{equation*}
for a constant $C>0$ independent of $y_T$, $F$ and $p$. 
\end{proposition}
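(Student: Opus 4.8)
The plan is to apply the It\^o formula \eqref{itosfor} of \Cref{prop:ito} directly and run a backward Gr\"onwall estimate, taking care that the Gr\"onwall exponent grows only \emph{linearly} in $p$, so that after extracting a $p$-th root the resulting constant no longer depends on $p$. Fix $p\in[2,+\infty)$ and $t\in[0,T]$, and take expectation in \eqref{itosfor}. All the deterministic time integrals appearing there belong to $L^1(\Omega)$ thanks to the a priori estimate \eqref{avoc} (for the products $|y|^{p-1}|Y|$ and $|y|^{p-1}|F|$ one first uses Young's inequality to reduce to quantities already controlled by \eqref{avoc}), and the stochastic integral has zero expectation: its integrand $s\mapsto\int_\dom|y(s)|^{p-2}y(s)Y(s)\dx$ satisfies, by Cauchy--Schwarz, $\int_0^T\big(\int_\dom|y|^{p-2}yY\dx\big)^2\d s\le\big(\sup_{s\in[0,T]}\|y(s)\|_{L^p(\dom)}^p\big)\big(\int_0^T\int_\dom|y|^{p-2}Y^2\dx\d s\big)$, whose square root is integrable by \eqref{avoc}, so the argument of \Cref{pf:mart} applies verbatim.

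After discarding the nonnegative term $p(p-1)\int_t^T\int_\dom|y|^{p-2}|\nabla y|^2\dx\d s$, three contributions of $\alpha y+\beta Y+F$ remain on the right. The $\alpha$-term equals $p\int_t^T\int_\dom\alpha|y|^p\dx\d s$ and is bounded by $p\|\alpha\|_{L^\infty}\int_t^T\int_\dom|y|^p\dx\d s$; the $F$-term is handled by Young's inequality, $p\int_t^T\int_\dom|y|^{p-1}|F|\dx\d s\le(p-1)\int_t^T\int_\dom|y|^p\dx\d s+\int_t^T\int_\dom|F|^p\dx\d s$. The delicate term is the one with $\beta$, and this is the main obstacle: a careless splitting either leaves a $|y|^{p-2}Y^2$ contribution that cannot be absorbed, or produces a Gr\"onwall exponent of order $p^2$, which would still blow up under the $p$-th root. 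The fix is to write $|y|^{p-1}|Y|=|y|^{p/2}\cdot|y|^{(p-2)/2}|Y|$ and apply Young's inequality with the \emph{tuned} weight $\varepsilon=\tfrac{p-1}{2\|\beta\|_{L^\infty}}$ (the case $\beta\equiv0$ being trivial), which gives $p\int_t^T\int_\dom|\beta|\,|y|^{p-1}|Y|\dx\d s\le\tfrac{p(p-1)}{4}\int_t^T\int_\dom|y|^{p-2}Y^2\dx\d s+2\|\beta\|_{L^\infty}^2\int_t^T\int_\dom|y|^p\dx\d s$ (using $p/(p-1)\le2$ for $p\ge2$); the first term on the right is absorbed into the left-hand side of \eqref{itosfor}, which carries $\tfrac{p(p-1)}{2}\int_t^T\int_\dom|y|^{p-2}Y^2\dx\d s$, the remainder $\tfrac{p(p-1)}{4}\int_t^T\int_\dom|y|^{p-2}Y^2\dx\d s\ge0$ is discarded, and the second term contributes a coefficient $2\|\beta\|_{L^\infty}^2$ in front of $\int_t^T\int_\dom|y|^p\dx\d s$ that does \emph{not} depend on $p$.

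Setting $g(t):=\E\int_\dom|y(t)|^p\dx$, which is finite and bounded on $[0,T]$ by \eqref{avoc}, and using Fubini's theorem, I would arrive at $g(t)\le\E\int_\dom|y_T|^p\dx+\E\int_0^T\int_\dom|F|^p\dx\d s+pC\int_t^T g(s)\d s$ for every $t\in[0,T]$, where $C:=\|\alpha\|_{L^\infty}+\|\beta\|_{L^\infty}^2+1$ is independent of $p$ (here one uses $2\|\beta\|_{L^\infty}^2\le p\|\beta\|_{L^\infty}^2$ and $p-1\le p$ since $p\ge2$). The backward Gr\"onwall inequality (\Cref{lem:back_gron}) then yields $g(t)\le e^{pCT}\big(\E\int_\dom|y_T|^p\dx+\E\int_0^T\int_\dom|F|^p\dx\d s\big)$ for every $t\in[0,T]$. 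Taking the $p$-th root, using the subadditivity of $x\mapsto x^{1/p}$ on $[0,+\infty)$, recognising $\big(\E\int_\dom|y_T|^p\dx\big)^{1/p}=\|y_T\|_{L^p_{\mathcal F_T}(\Omega;L^p(\dom))}$ and $\big(\E\int_0^T\int_\dom|F|^p\dx\d s\big)^{1/p}=\|F\|_{L^p_{\fil}(0,T;L^p(\dom))}$, and finally taking the essential supremum over $t\in[0,T]$ (which is at most $\sup_{t\in[0,T]}g(t)^{1/p}$), gives the asserted estimate with the $p$-independent constant $C$. Beyond the tuning of the Young weight in the $\beta$-term, everything — the vanishing of the martingale term, the integrability of the various integrands, and the identity $\E\int_t^T\int_\dom|y|^p\dx\d s=\int_t^T g(s)\d s$ — is routine once \eqref{avoc} and \Cref{pf:mart} are available.
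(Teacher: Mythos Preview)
Your proof is correct and follows essentially the same route as the paper: apply the It\^o formula \eqref{itosfor}, take expectation, absorb the $|y|^{p-2}Y^2$ contribution from the $\beta$-term into the left-hand side, run backward Gr\"onwall with an exponent linear in $p$, and extract the $p$-th root. The only notable difference is that you tune the Young weight to $\varepsilon=\tfrac{p-1}{2\|\beta\|_{L^\infty}}$, whereas the paper simply uses $ab\le\tfrac12(a^2+b^2)$ to get $p|\beta|\,|y|^{p-1}|Y|\le\tfrac{p}{2}|y|^{p-2}|Y|^2+\tfrac{p\|\beta\|_{L^\infty}^2}{2}|y|^p$; since $\tfrac{p}{2}\le\tfrac{p(p-1)}{2}$ for $p\ge2$, the untuned splitting already absorbs cleanly and yields the Gr\"onwall exponent $pK+(p-1)$ with $K=\|\alpha\|_{L^\infty}+\|\beta\|_{L^\infty}^2$, so the ``obstacle'' you identify is not actually there. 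The paper also detours through a conditional expectation $\E(\cdot\mid\mathcal F_r)$ before setting $r=t$ and taking full expectation, but this is inessential---your direct passage to expectation is equally valid.
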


\begin{proof} Let $(y,Y)$ be a weak solution to \eqref{eq:intro}. We start from formula  \eqref{itosfor} in \Cref{prop:ito}, namely
\begin{align*}
&\int_{\dom}|y(t)|^p\dx+p(p-1)\int_t^T\int_{\dom}|y|^{p-2}|\nabla y|^2\dx\d s+\frac{p(p-1)}{2}\int_t^T\int_{\dom}|y|^{p-2}|Y|^2\dx\d s\\\notag
&=\int_{\dom}|y_T|^p\dx+p\int_t^T\int_{\dom}|y|^{p-2}y\left(\alpha y+\beta Y+F\right)\dx-p\int_t^T\int_{\dom}|y|^{p-2}yYdW(s), \quad \textnormal{a.s.,}
\end{align*}
that holds for any $t\in[0,T]$.

In particular, by taking into account that the third term in the left-hand side is nonnegative, we deduce that
\begin{align}\notag 
&\int_{\dom}|y(t)|^p\dx +\frac{p(p-1)}{2}\int_t^T\int_{\dom}|y|^{p-2}|Y|^2\dx\d s \\ \label{eq:itosfortineq}
&\quad \leq\int_{\dom}|y_T|^p\dx+p\int_t^T\int_{\dom}|y|^{p-1}\left|\alpha y+\beta Y+F\right|\dx-p\int_t^T\int_{\dom}|y|^{p-2}yY\d W(s), \quad \text{a.s.}
\end{align}
Let $r \leq t$ and take conditional expectation with respect to $\mathcal F_r$ in \eqref{eq:itosfortineq} to get
\begin{align}\notag
&\mathbb E \left( \norme{y(t)}_{L^p(\mathcal{O})}^p  \bigmid \mathcal F_r \right) +\frac{p(p-1)}{2}\E \left(\int_t^T\int_{\dom}|y|^{p-2}|Y|^2\dx\d s \bigmid \mathcal F_r \right) \\ 
&\quad \leq \mathbb E \left( \norme{y_T}_{L^p(\mathcal{O})}^p \bigmid \mathcal F_r \right) + p\, \mathbb E \left( \int_t^{T} \int_{\mathcal O}  |y|^{p-1} \big[|\alpha y|+|\beta Y|+|F|\big] \dx \d{s}  \bigmid \mathcal F_r  \right),\label{eq:itosfor2}
\end{align}
because
\begin{equation*}
\mathbb E \left( \int_t^{T} \int_{\mathcal O} p |y|^{p-2} y Y dW(s) \bigmid \mathcal F_r \right) = 0.
\end{equation*}
This can be shown by employing estimate \eqref{avoc} and following the steps in \Cref{pf:mart}. For brevity, we skip the details. 
 
Let us estimate the second integral in \eqref{eq:itosfor2}. By a direct computation using the monotonicty of the conditional expectation, we have
\begin{equation}\label{est:alphay}
p\, \mathbb E \left( \int_t^{T} \int_{\mathcal O}  |y|^{p-1} |\alpha y| \dx \d{s}  \bigmid \mathcal F_r  \right) \leq p \, \|\alpha\|_{L^\infty_{\fil}(0,T;L^\infty(\dom))} \, \mathbb E \left( \int_t^{T} \int_{\mathcal O}  |y|^{p} \dx \d{s}  \bigmid \mathcal F_r  \right).
\end{equation}
For the second one, by Cauchy-Schwarz and Young inequality we have 
\begin{align}\notag
&p\, \mathbb E \left( \int_t^{T} \int_{\mathcal O}  |y|^{p-1} |\beta Y|  \dx \d{s}  \bigmid \mathcal F_r  \right)\\ \label{est:betaY}
&\quad \leq \frac{p}{2}\,\E \left( \int_t^{T} \int_{\mathcal O}  |y|^{p-2} |Y|^2  \dx \d{s}  \bigmid \mathcal F_r  \right) + \frac{p \|\beta\|^2_{L^\infty_{\fil}(0,T;L^\infty(\dom))}}{2}  \E \left( \int_t^{T} \int_{\mathcal O}  |y|^{p}  \dx \d{s}  \bigmid \mathcal F_r  \right).
\end{align}
For the last one, using Young inequality $a b \leq (1/p) a^p + (1/q) b^q$ with $q=p/(p-1)$, so $p a b \leq a^p + (p-1) b^{p/(p-1)}$ with $a=|F|$ and $b = |y|^{p-1}$ we obtain  that
\begin{align}\notag 
&p\, \mathbb E \left( \int_t^{T} \int_{\mathcal O}  |y|^{p-1} |F|  \dx \d{s}  \bigmid \mathcal F_r  \right) \\ \label{eq:est_F}
\quad &\leq  \E \left( \int_t^{T} \int_{\mathcal O} |F|^p \dx \d{s}  \bigmid \mathcal F_r  \right) + (p-1)  \E \left( \int_t^{T} \int_{\mathcal O} 
  |y|^{p}  \dx \d{s}  | \mathcal F_r  \right).
\end{align}
Therefore, combining \eqref{eq:itosfor2} with estimates \eqref{est:alphay}--\eqref{eq:est_F} we get
\begin{align*}\notag
&\mathbb E \left( \norme{y(t)}_{L^p(\mathcal{O})}^p  \bigmid \mathcal F_r \right) +\frac{p(p-2)}{2}\E \left(\int_t^T\int_{\dom}|y|^{p-2}|Y|^2\dx\d s \bigmid \mathcal F_r \right) \\ \notag
&\quad \leq \mathbb E \left( \norme{y_T}_{L^p(\mathcal{O})}^p \bigmid \mathcal F_r \right) + \E \left( \int_t^{T} \int_{\mathcal O} |F|^p \dx \d{s}  \bigmid \mathcal F_r  \right)  \\
&\qquad +  [pK+(p-1)]\, \mathbb E \left( \int_t^{T} \int_{\mathcal O}  |y|^{p} \dx\d{s}  \bigmid \mathcal F_r  \right),
\end{align*}
where $K:=\|\alpha\|_{L^\infty_{\fil}(0,T,L^\infty(\dom))}+\|\beta\|^2_{L^\infty_{\fil}(0,T,L^\infty(\dom))}$. Note that since $p\geq 2$, the second term in the left-hand side of the above inequality is always nonnegative. 

Now, we take $r=t$ in the previous estimate and use the fact that $\mathbb E ( \norme{y(t)}_{L^p(\mathcal{O})}^p  \mid \mathcal F_t ) = \norme{y(t)}_{L^p(\mathcal{O})}^p$ 
since $y$ is $\F_t$-adapted to obtain 
\begin{align}\notag 
&\norme{y(t)}_{L^p(\mathcal{O})}^p  \\
&\leq  \E \left( \norme{y_T}_{L^p(\mathcal{O})}^p \bigmid \mathcal F_t \right) +  \E \left( \int_t^{T} \int_{\mathcal O} |F|^p \dx \d{s}  \bigmid \mathcal F_t  \right) +[pK+(p-1)]  \E \left( \int_t^{T} \int_{\mathcal O} 
  |y|^{p}  \dx \d{s}  \bigmid \mathcal F_t  \right) \label{est:lp_ft}
  \end{align}
whence, by taking expectation and using Fubini's theorem yield, we get
\begin{equation*}
\mathbb E \left( \norme{y(t)}_{L^p(\mathcal{O})}^p\right) \leq  \norme{y_T}_{L^{p}_{\mathcal F_T}(\Omega;L^p(\mathcal O))}^p +  \norme{F}_{L^{p}_{\fil}(0,T;L^p(\mathcal O))}^p + [pK+(p-1)] \int_t^{T} \mathbb E\left( \norme{y(s)}_{L^p(\mathcal{O})}^p\right) \d{s},
\end{equation*}
  for all $t\in[0,T]$.
  
To finish, we apply backward Grönwall estimate (see \Cref{lem:back_gron}) to deduce
\begin{equation*}
\E \left( \norme{y(t)}_{L^p(\mathcal{O})}^p\right) \leq \exp([pK+(p-1)](T-t)) \left( \norme{y_T}_{L^{p}_{\F_T}(\Omega;L^p(\mathcal O))}^p + \norme{F}_{L^{p}_{\fil}(0,T;L^p(\mathcal O))}^p  \right),
\end{equation*}
that is 
\begin{equation*}
\norme{y(t)}_{L^p(\Omega; L^p(\mathcal{O}))}^p \leq \exp([pK+(p-1)](T-t)) \left( \norme{y_T}_{L^{p}_{\F_T}(\Omega;L^p(\mathcal O))}^p + \norme{F}_{L^{p}_{\fil}(0,T;L^p(\mathcal O))}^p  \right).
\end{equation*}
We take the power $1/p$ and we get
\begin{align*}
\norme{y(t)}_{L^p(\Omega; L^p(\mathcal{O}))} &\leq \exp((K+(p-1)/p) (T-t)) \left( \norme{y_T}_{L^{p}_{\F_T}(\Omega;L^p(\mathcal O))}^p + \norme{F}_{L^{p}_{\fil}(0,T;L^p(\mathcal O))}^p  \right)^{1/p} \\
& \leq \exp([K+1](T-t)) \left( \norme{y_T}_{L^{p}_{\F_T}(\Omega;L^p(\mathcal O))} + \norme{F}_{L^{p}_{\fil}(0,T;L^p(\mathcal O))}  \right),
\end{align*}
where we have used that $(p-1)/p<1$. Taking $L^\infty$-norm in time yields the desired result.
\end{proof}

We present the proof of our main result. 

\begin{proof}[Proof of \Cref{thm:infinity}]

From \Cref{prop:linf_uniform}, we have that the following estimate holds
\begin{equation}\label{eq:limit_initial}
\norme{y(t)}_{L^p(\Omega;L^p(\mathcal O))} \leq \exp (C T ) \left( \norme{y_T}_{L^{p}_{\mathcal F_T}(\Omega;L^p(\mathcal O))} + \norme{F}_{L^{p}_{\fil}(0,T;L^p(\mathcal O))}\right)
\end{equation}
for all $t\in[0,T]$, where $y$ can be found from $(y,Y)$ solution to \eqref{eq:intro} and $C>0$ is a constant independent of $p$. 

By H\"older inequality, the right-hand side of the above expression is bounded as
\begin{align}\notag
R.H.S. &\leq \exp (C T )  \left(|\mathcal O|^{1/p} \norme{y_T}_{L^{\infty}_{\mathcal F_T}(\Omega;L^\infty(\mathcal O))} + |\mathcal O|^{1/p} |T|^{1/p} \norme{F}_{L^{\infty}_{\fil}(0,T;L^\infty(\mathcal O))}\right)\\ \label{eq:uniform_est}
&\leq   \exp (C^\prime(1+T)) \left( \norme{y_T}_{L^{\infty}_{\mathcal F_T}(\Omega;L^\infty(\mathcal O))} + \norme{F}_{L^{\infty}_{\fil}(0,T;L^\infty(\mathcal O))} \right)
\end{align}
for a constant $C^\prime>0$ depending on $\dom$ but uniform with respect to $T$ and $p$. Estimate \eqref{eq:uniform_est} and \Cref{lem:limit} allows us to pass to the limit as $p\to+\infty$ in \eqref{eq:limit_initial} and obtain 
\begin{equation*}
\norme{y(t)}_{L^\infty(\Omega;L^\infty(\mathcal O))} \leq \exp (C T ) \left( \norme{y_T}_{L^{\infty}_{\mathcal F_T}(\Omega;L^{\infty}(\mathcal O))} + \norme{F}_{L^{\infty}_{\fil}(0,T;L^{\infty}(\mathcal O))}\right)
\end{equation*}
for every $t\in[0,T]$. Therefore, 
\begin{equation*}
\norme{y}_{L^\infty_{\fil}(0,T;L^\infty(\Omega;L^\infty(\mathcal O)))} \leq \exp (C T ) \left( \norme{y_T}_{L^{\infty}_{\mathcal F_T}(\Omega;L^{\infty}(\mathcal O))} + \norme{F}_{L^{\infty}_{\fil}(0,T;L^{\infty}(\mathcal O))}\right).
\end{equation*}
The result \eqref{eq:linf_est} then follows by interchanging the $L^{\infty}$-norms.
\end{proof}

\section{Local well-posedness of semilinear equations}\label{sec:semilinear}
In this section, we present the proof of \Cref{thm:semi}. Hereinafter, we set $Q_T:=(0,T)\times \dom$ and $\Sigma_T:=(0,T)\times \partial \dom$ to abridge the notation.

We begin by stating the following result.

\begin{proposition}\label{thm:new_space}
Let $p\in[2,+\infty)$, $(y,Y)$ be a weak solution to \eqref{eq:intro} and assume that $y_T\in L^\infty_{\mathcal F_T}(\Omega;L^\infty(\dom))$ and $F\in L^{\infty}_{\fil}(0,T;L^\infty(\mathcal O))$. Then, $(y,Y)\in\mathscr Y_p$ and satisfies
\begin{equation}\label{est:Yp_full}
\|(y,Y)\|_{\mathscr Y_p}\leq  C\left(\|y_T\|_{L^\infty_{\F_T}(\Omega;L^\infty(\dom))}+\|F\|_{L^\infty_{\fil}(0,T;L^\infty(\dom))}\right),
\end{equation}
for a constant $C>0$ only depending on $\mathcal O$, $T$, and $p$.
\end{proposition}

\begin{proof}
The proof is a direct consequence of H\"older inequality, \Cref{thm:l2_sol}, point (ii) of \Cref{thm:ito}, and \Cref{prop:extra_regularity_Y}.
\end{proof}


\begin{proof}[Proof of \Cref{thm:semi}]
We will see that if the terminal data $y_T$ is small enough, then there is a unique solution $(y,Y)$ to \eqref{eq:semi} in $\mathscr Y_p$. We split the proof in two parts.

-\textit{Existence.} By Taylor's formula at second order, we have that
\begin{equation}\label{eq:taylor}
\forall s\in\mathbb R, \quad f(s)=f^\prime(0)s+s^2G(s), \quad 
\end{equation}
where $G(s)=\int_0^1 (1-\sigma) f^{\prime\prime}(\sigma s)\d{\sigma}$. For any $r\in(0,1)$, let 
\begin{equation}\label{Y_r}
\mathscr Y^r_p:=\{(y,Y)\in\mathscr Y_p: \|(y,Y)\|_{\mathscr Y_p}\leq r\},
\end{equation} and consider the linear equation
\begin{equation}
\label{eq:linearized}
\begin{cases}
\d{y}=-\left(\Delta y + \alpha y + \beta Y  + f^\prime(0) y + \ov{y}^2G(\ov y)  \right)\d{t}+Y\d{W}(t) &\text{in } Q_T, \\
y=0 &\text{on } \Sigma_T, \\
y(T)=y_T &\text{in }\dom,
\end{cases}
\end{equation}
where $\ov y$ is the first entry of any given pair $(\ov y, \ov Y)\in\mathscr Y_p^r$ and $y_T\in L^\infty_{\F_T}(\Omega;L^\infty(\dom))$. Note that this is a linearized version of \eqref{eq:semi}. 

Let us denote by $(y,Y)$ the solution to \eqref{eq:linearized}. We will check that the mapping
\begin{equation*}
\begin{split}
\mathcal N : \mathscr Y_p^r &\longrightarrow \mathscr Y_p^r \\
 (\ov y,\ov Y)&\longmapsto (y,Y)
\end{split}
\end{equation*}
is well-defined if we consider terminal data $y_T$ small enough. Since $f\in C^\infty(\mathbb R)$, from point \ref{eq:est_sec} of \Cref{lem:nonlinearity}, we have that $|G(s)|\leq M$ for all $s\in[-2,2]$ where the constant $M>0$ only depends on $f$. Hence, defining $\ov F:=\ov y^2G(\ov y)$ we have
\begin{align}\notag 
\|\ov{F}\|_{L^\infty_{\fil}(0,T;L^\infty(\dom))}&=\|\ov y^2G(\ov y)\|_{L^\infty_{\fil}(0,T;L^\infty(\dom))} \\ \notag
&\leq M \|\ov{y}^2\|_{L^\infty_{\fil}(0,T;L^\infty(\dom))} =M \|\ov{y}\|^2_{L^\infty_{\fil}(0,T;L^\infty(\dom))} \\ \label{eq:est_infty}
&\leq M\|(\ov y,\ov Y)\|_{\mathscr Y_p}^2\leq M r^2 < +\infty,
\end{align}
for any $(\ov y, \ov Y)\in\mathscr Y_p^r$. 

Thus, using \Cref{thm:new_space} with this particular $\ov{F}$ and estimate \eqref{eq:est_infty} yield that there is a solution $(y,Y)\in\mathscr Y_p$ to \eqref{eq:linearized} such that 
\begin{align}\label{eq:est_y}
\|(y,Y)\|_{\mathscr Y_p}&\leq C\left(\|y_T\|_{L^\infty_{\F_T}(\Omega;L^\infty(\dom))}+Mr^2\right)
\end{align}
for some $C>0$ only depending on $\dom$, $T$ and $p$. Let us take 
\begin{equation*}
r\leq \frac{1}{8CM},
\end{equation*}
 where $C$ is the constant appearing in \eqref{eq:est_y} and fix $\delta>0$ such that $\delta \leq r/2C$. Thus for any $y_T\in L^\infty_{\F_T}(\Omega;L^\infty(\dom))$ verifying $\|y_T\|_{L^\infty_{\F_T}(\Omega;L^\infty(\dom))}\leq \delta$ we have
\begin{equation}\label{eq:est_y_r}
\|(y,Y)\|_{\mathscr Y_p} \leq \frac{r}{2}+\frac{r}{8}=  \frac{5r}{8}\leq r.
\end{equation}
This shows that the map $\mathcal N:\mathscr Y_p^r\to \mathscr Y_p^r$ is well-defined. 

To conclude, we will verify that $\mathcal N$ is a contraction on $\mathscr Y_p^r$ and use Banach fixed point theorem. By point \ref{eq:est_dif_G} of \Cref{lem:nonlinearity} with $\mathcal I=[-2,2]$, we have that for  any $\ov y_1$, $\ov y_2$ taken from the pairs $(\ov{y}_{i},\ov{Y}_{i})\in\mathscr Y_p^r$, $i=1,2$,
\begin{align}\notag
&\|\ov{y}_1G(\ov y_1)-\ov{y}_2 G(\ov y_2)\|_{L^\infty_{\fil}(0,T;L^\infty(\dom))}\\ \notag
&\qquad \leq M \|\ov{y}_1-\ov{y}_2\|_{L^\infty_{\fil}(0,T;L^\infty(\dom))}\left(\|\ov{y}_1\|_{L^\infty_{\fil}(0,T;L^\infty(\dom))}+\|\ov{y}_2\|_{L^\infty_{\fil}(0,T;L^\infty(\dom))}\right) \\ \label{est:rhs}
&\quad \qquad + M_1\|y_2\|^2_{L^\infty_{\fil}(0,T;L^\infty(\dom))} \|\ov{y}_1-\ov{y}_2\|_{L^\infty_{\fil}(0,T;L^\infty(\dom))},
\end{align}
for some positive constants $M,M_1$ only depending on $f$. 

Denote by $(y_i,Y_i)$ the solution of \eqref{eq:linearized} for the corresponding $\ov{y}_i$, $i=1,2$. From the linearity of \eqref{eq:linearized}, \Cref{thm:new_space} with $\ov{F}=\ov{y}_1G(\ov y_1)-\ov{y}_2 G(\ov y_2)$ and $y_T\equiv 0$, and estimate \eqref{est:rhs}, we can compute
\begin{align}\notag
&\|\mathcal N(\ov{y}_1,\ov{Y}_1)-\mathcal N(\ov{y}_2,\ov{Y}_2)\|_{\mathscr Y_p}= \|\mathcal ({y}_1,{Y}_1)- ({y}_2,{Y}_2)\|_{\mathscr Y_p} \\ \notag
&\qquad \leq CM \|\ov{y}_1-\ov{y}_2\|_{L^\infty_{\fil}(0,T;L^\infty(\dom))}\left(\|\ov{y}_1\|_{L^\infty_{\fil}(0,T;L^\infty(\dom))}+\|\ov{y}_2\|_{L^\infty_{\fil}(0,T;L^\infty(\dom))}\right) \\ \notag
&\quad \qquad + CM_1\|y_2\|^2_{L^\infty_{\fil}(0,T;L^\infty(\dom))} \|\ov{y}_1-\ov{y}_2\|_{L^\infty_{\fil}(0,T;L^\infty(\dom))} \\\label{eq:est_r}
& \qquad \leq 2CM r \|\ov{y}_1-\ov{y}_2\|_{L^\infty_{\fil}(0,T;L^\infty(\dom))}+ CM_1 r^2 \|\ov{y}_1-\ov{y}_2\|_{L^\infty_{\fil}(0,T;L^\infty(\dom))}.
\end{align}
From \eqref{eq:est_r} and decreasing (if necessary) the value of $r$ and taking $r\leq \min\{\frac{1}{8CM},\frac{1}{\sqrt{4CM_1}}\}$, we get
\begin{align*}
\|\mathcal N(\ov{y}_1,\ov{Y}_1)-\mathcal N(\ov{y}_2,\ov{Y}_2)\|_{\mathscr Y_p} \leq \frac{1}{2}\|\ov{y}_1-\ov{y}_2\|_{L^\infty_{\fil}(0,T;L^\infty(\dom))} \leq  \frac{1}{2} \| (\ov y_1,\ov Y_1)-(\ov y_2,\ov Y_2)\|_{\mathscr Y_p}.
\end{align*}
Thus $\mathcal N$ is a contraction on the set $\mathscr Y_p^r$. A direct application of Banach fixed point theorem yields that there is a unique fixed point for $\mathcal N$ which is a weak solution to \eqref{eq:semi} thanks to \eqref{eq:taylor} and that belongs to $\mathscr Y_p^r$ since  \eqref{eq:est_y_r} holds. 

\smallskip

-\textit{Uniqueness}. Let $\delta>0$ be fixed and small enough as in the previous part. Let $y_T\in L^\infty_{\mathcal F_T}(\Omega;L^\infty(\dom))$ be such that $\|y_T\|_{L^\infty_{\F_T}(\Omega;L^\infty(\dom))}\leq\delta$ and assume that there are two weak solutions $(y_1,Y_1)$ and $(y_2,Y_2) $ to \eqref{eq:semi} for this same data that belong to some ball $\mathscr Y_p^r$ (recall \eqref{Y_r}). Defining $(y,Y):=(y_1-y_2,Y_1-Y_2)$, it is not difficult to see that these new variables verify 
\begin{equation*}
\begin{cases}
\d{y}=-\left(\Delta y + \alpha y + \beta Y + \left(f(y_1)-f(y_2)\right) \right)\d{t}+Y\d{W}(t) &\text{in } Q_T, \\
y=0 &\text{on } \Sigma_T, \\
y(T)=0 &\text{in }\dom.
\end{cases}
\end{equation*}
Arguing as we did for obtaining \eqref{est:lp_ft} and taking expectation, we can deduce
\begin{align}\notag 
&\mathbb E\left(\norme{y(t)}_{L^p(\mathcal{O})}^p\right) \leq \E \left( \int_t^{T} \int_{\mathcal O} |f(y_1)-f(y_2)|^p \dx \d{s} \right) +C  \E \left( \int_t^{T} \int_{\mathcal O} 
  |y|^{p}  \dx \d{s}\right),
  \end{align}
where $C>0$ is a constant only depending on $p$, $\alpha$ and $\beta$. Since $(y_i,Y_i)\in \mathscr Y_p^r$, $i=1,2$, and using the fact that $f$ is locally Lipschitz on $[-r,r]$, there is a constant $C^\prime>0$ depending on $r$ such that $|f(s_1)-f(s_2)|\leq C^\prime|s_1-s_2|$, $s_1,s_2\in\mathbb [-r,r]$, therefore
\begin{align}\notag 
&\mathbb E\left(\norme{y(t)}_{L^p(\mathcal{O})}^p\right) \leq C^{\prime\prime}\E \left( \int_t^{T} \int_{\mathcal O} |y|^p \dx \d{s} \right),
  \end{align}
  for a constant $C^{\prime\prime}>0$. A standard Grönwall argument using \Cref{lem:back_gron}, yields that $y_1=y_2$ a.e $(t,x)\in Q_T$, a.s. This ends the proof.
\end{proof}

\section{Applications to controllability}\label{sec:control}

{The goal of this section is to present some applications to the control of backward stochastic parabolic equations, first in the linear case and then in the nonlinear framework. In particular, we will prove \Cref{thm:control} and \Cref{theo:control_semi}.}

\subsection{The linear case}

We consider the following backward stochastic heat equation
 \begin{equation}
\label{eq:backward_linear}
\begin{cases}
\d{y}=-\left(\Delta y + \alpha y + \chi_{\dom_0}h\right)\d{t}+Y\d{W}(t) &\text{in } Q_T, \\
y=0 &\text{on } \Sigma_T, \\
y(T)=y_T &\text{in }\dom.
\end{cases}
\end{equation}
In \eqref{eq:backward_linear}, we recall that $(y,Y)$ the state variable while $h$ is the control variable located in $\mathcal O_0$ where $\mathcal O_0$ is a nonempty open subset of $\mathcal O$.

We now present the proof of the null-controllability result of \Cref{thm:control}.
\begin{proof}[Proof of \Cref{thm:control}]
We take $p \in (2, +\infty)$ so $p' \in (1,2)$.

The adjoint of \eqref{eq:backward_linear} is given by the random heat equation
\begin{equation}
\label{eq:forward_linear}
\begin{cases}
\d{q}=(\Delta q + \alpha q) \dt &\text{in } Q_T, \\
q=0 &\text{on }\Sigma_T, \\
q(0)=q_0 &\text{in }\dom.
\end{cases}
\end{equation}
Note that the randomness in equation \eqref{eq:forward_linear} appears only through the adapted coefficient $\alpha \in L_{\mathbb F}^{\infty}(0,T;L^{\infty}(\mathcal O))$ and the $\mathcal F_0$-measurable initial datum $q_0 \in L^{p'}_{\F_0}(\Omega;L^{p'}(\mathcal O))$. Moreover, for such coefficients, the weak solution
$t\mapsto q(t,\cdot)\in L^2(\mathcal O)$ to \eqref{eq:forward_linear}
is $\mathcal F_t$-measurable for every $t\in[0,T]$.

For $\omega \in \Omega$ fixed, we have from \cite[Proposition 3.2]{FCZ00}
\begin{equation}
\label{eq:observabilitydeterministic}
\norme{q(\omega,T)}_{L^{2}(\mathcal O)} \leq C \norme{q(\omega)}_{L^{1}((0,T)\times\mathcal O_0)},
\end{equation}
from which we directly deduce, by using $L^2(\mathcal O) \hookrightarrow L^{p'}(\mathcal O)$ and $L^{p'}((0,T)\times\mathcal O_0)) \hookrightarrow L^{1}((0,T)\times\mathcal O_0))$,
\begin{equation*}
\norme{q(\omega,T)}_{L^{p'}(\mathcal O)} \leq C \norme{q(\omega)}_{L^{p'}((0,T)\times\mathcal O_0)}.
\end{equation*}
By taking the power $p'$ and by taking the expectation we finally get 
\begin{equation*}
\mathbb E \Big[\norme{q(T)}_{L^{p'}(\mathcal O)}^{p'}\Big] \leq C \mathbb E \Big[\norme{q}_{L^{p'}((0,T)\times\mathcal O_0)}^{p'}\Big].
\end{equation*}
This rewrites as
\begin{equation}
\label{eq:obsLp'}
\norme{q(T)}_{L^{p'}_{\F_T}(\Omega;L^{p'}(\mathcal O))} \leq C  \norme{q}_{L^{p'}_{\fil}(0,T;L^p(\mathcal O_0))}.
\end{equation}
Note that the constant $C>0$ in \eqref{eq:obsLp'} is of the form $\exp(C/T)$ for $T \in (0,1)$ for some $C$ depending on $\mathcal O$, $\mathcal O_0$, $\alpha$ and $p$ because the observability constant $C$ in \eqref{eq:observabilitydeterministic} appearing in the deterministic observability inequality is of the form $\exp(C/T)$.

Now, we will follow an argument given by \cite{TZ09}, adapted to the $L^{p'}$-setting. For any $y_T \in L^{p}_{\F_T}(\Omega;L^{p}(\mathcal O))$, the goal is to find a control $h \in L^{p}_{\fil}(0,T;L^p(\mathcal O_0))$ such that the solution $y$ of \eqref{eq:backward_linear} satisfies $y(0) = 0$ a.s.

We introduce the following linear subspace of $L_{\fil}^{p'}(0,T;L^{p'}(\mathcal O_0))$, 
\begin{equation*}
\mathcal X = \{ q_{|\Omega \times (0,T) \times \mathcal O_0} \in L_{\fil}^{p'}(0,T;L^{p'}(\mathcal O))\ ;\ q\ \text{satisfies}\ \eqref{eq:forward_linear}\ \text{for some}\ q_0 \in L^{p'}_{\F_0}(\Omega;L^{p'}(\mathcal O))\}.
\end{equation*}
Then we define the following linear functional on $\mathcal X$ by
\begin{equation*}
\mathcal{L}(q) = - \mathbb E \int_{\mathcal O} y_T q(T) \dx.
\end{equation*}
The observability estimate \eqref{eq:obsLp'} leads to the fact that $\mathcal L$ is a bounded linear functional on $\mathcal X$ because
\begin{equation}
\label{eq:appobslinearfunctional}
 \left| \mathbb E  \int_{\mathcal O} y_T q(T) \dx \right| \leq \norme{y_T}_{L^{p}_{\F_T}(\Omega;L^{p}(\mathcal O))}\norme{q(T)}_{L^{p'}_{\F_T}(\Omega;L^{p'}(\mathcal O))} \leq C \norme{y_T}_{L^{p}_{\F_T}(\Omega;L^{p}(\mathcal O))} \norme{q}_{L^{p'}_{\fil}(0,T;L^{p'}(\mathcal O_0))}.
\end{equation}
By the Hahn-Banach theorem, $\mathcal L$ can be extended to a bounded linear functional on $L_{\fil}^{p'}(0,T;L^{p'}(\mathcal O_0))$ with
\begin{equation}
\label{eq:normfunctional}
\| \mathcal L\|_{\mathcal L( L_{\fil}^{p'}(0,T;L^{p'}(\mathcal O_0)),\R)}\leq C \norme{y_T}_{L^{p}_{\F_T}(\Omega;L^{p}(\mathcal O))},
\end{equation}
where $C$ is the same as in  \eqref{eq:obsLp'}. Now, we use the following fact coming from \cite[Corollary 2.3]{LYZ12}
\begin{equation*}
(L_{\fil}^{p'}(0,T;L^{p'}(\mathcal O_0))' = L_{\fil}^{p}(0,T;L^p(\mathcal O_0)).
\end{equation*}
This means that there exists $h \in L_{\fil}^{p}(0,T;L^p(\mathcal O_0))$ such that
\begin{equation}
\label{eq:equationcontrolFunctional}
\mathcal{L}(q) = \mathbb E \int_{0}^{T} \int_{\mathcal O_0} h q \dx \dt\qquad \forall q \in L^{p'}_{\fil}(0,T;L^{p'}(\mathcal O_0)),
\end{equation}
in particular for $q$ solution to \eqref{eq:forward_linear}
\begin{equation}
\label{eq:equationcontrol}
\mathcal{L}(q) = - \mathbb E \int_{\mathcal O} y_T q(T) \dx = \mathbb E \int_{0}^{T} \int_{\mathcal O_0} h q \dx \dt\qquad \forall q_0 \in L^{p'}_{\F_0}(\Omega;L^{p'}(\mathcal O)).
\end{equation}
Now we use It\^o's formula to get
\begin{equation*}
\d(yq) = y \d{q} + q \d{y} + \d{y} \d{q}.
\end{equation*}
So we get
\begin{equation*}
 \mathbb E \int_{\mathcal O} y_T q(T) \dx -  \mathbb E \int_{\mathcal O} y(0) q_0 \dx = \mathbb E \int_{Q_T}  y \d{q} + q \d{y} + \d{y} \d{q} = \mathbb E \int_{0}^{T} \int_{\mathcal O_0} h q \dx \dt.
\end{equation*}
Therefore, we have
\begin{equation*}
 \mathbb E \int_{\mathcal O} y(0) q_0 \dx = 0\qquad \forall q_0 \in L^{p'}_{\F_0}(\Omega;L^{p'}(\mathcal O)).
\end{equation*}
This implies that $y(0) = 0$ in $\mathcal O$ a.s. Moreover, we have from 
\eqref{eq:normfunctional} and \eqref{eq:equationcontrolFunctional} that
\begin{equation*}
\|h\|_{L_{\fil}^{p}(0,T;L^p(\mathcal O_0))} \leq C \norme{y_T}_{L^{p}_{\F_T}(\Omega;L^{p}(\mathcal O))},
\end{equation*}
where
\begin{equation*}
C=C(T) \leq \exp(C/T),
\end{equation*}
hence the expected bound on $h$ in \eqref{eq:EsimationControlhLp}. This concludes the proof in the case $p\in(2,+\infty)$.
\medskip

Now we deal with the more subtle case $p=+\infty$. The difficulty is that one cannot use  \cite[Corollary 2.3]{LYZ12} because the so-called Radon-Nikodym property is not satisfied by $L^{\infty}$, in particular $(L_{\fil}^{1}(0,T;L^{1}(\mathcal O_0))'$ is not equal to $L_{\fil}^{\infty}(0,T;L^\infty(\mathcal O_0))$. We start from the $L^1$-observability estimate for solution $q$ to \eqref{eq:forward_linear}
\begin{equation}
\label{eq:obsL1}
\norme{q(T)}_{L^{1}_{\fil}(\Omega;L^{1}(\mathcal O))} \leq C  \norme{q}_{L^{1}_{\fil}(0,T;L^1(\mathcal O_0))},
\end{equation}
that is an easy consequence of \eqref{eq:observabilitydeterministic}. We then introduce the following linear subspace of $L_{\mathcal F}^1(0,T;L^{1}(\mathcal O_0))$, 
\begin{equation*}
\mathcal X = \{ q_{|\Omega \times (0,T) \times \mathcal O_0} \in L_{\fil}^1(0,T;L^{1}(\mathcal O))\ :\ q\ \text{satisfies}\ \eqref{eq:forward_linear}\ \text{for some}\ q_0 \in L^1_{\F_0}(\Omega;L^{1}(\mathcal O))\}.
\end{equation*}
Then we define the following linear functional on $\mathcal X$ by
\begin{equation*}
\mathcal{L}(q) = - \mathbb E \int_{\mathcal O} y_T q(T) dx.
\end{equation*}
The observability estimate \eqref{eq:obsL1} leads to the fact that $\mathcal L$ is a bounded linear functional on $\mathcal X$. By the Hahn-Banach theorem, $\mathcal L$ can be extended to a bounded linear functional on $L_{\mathcal F}^1(0,T;L^{1}(\mathcal O_0))$ with
\begin{equation}
\label{eq:normfunctionalL1}
\| \mathcal L\|_{\mathcal L( L_{\fil}^{1}(0,T;L^{1}(\mathcal O_0)),\R)} \leq C \norme{y_T}_{L^{\infty}_{\F_T}(\Omega;L^{\infty}(\mathcal O))},
\end{equation}
where $C$ is the same as in \eqref{eq:obsL1}. In particular, $\mathcal L$ is a bounded linear functional on $L_{\fil}^2(0,T;L^{2}(\mathcal O_0))$ because $L_{\fil}^2(0,T;L^{2}(\mathcal O_0)) \hookrightarrow L_{\fil}^1(0,T;L^{1}(\mathcal O_0))$, therefore we can use the following fact coming from \cite[Corollary 2.3]{LYZ12},
\begin{equation*}
(L_{\fil}^2(\Omega;L^{2}(0,T;L^{2}(\mathcal O_0)))' = L_{\fil}^{2}(\Omega;L^2(0,T;L^2(\mathcal O_0))).
\end{equation*}
This means that there exists $h \in L_{\fil}^{2}(\Omega;L^2(0,T;L^2(\mathcal O_0)))$ such that
\begin{equation}
\label{eq:equationcontrolFunctionalL2}
\mathcal{L}(q) = \mathbb E \int_{0}^{T} \int_{\mathcal O_0} h q dx dt\qquad \forall q \in L^{2}_{\fil}(0,T;L^{2}(\mathcal O_0)),
\end{equation}
in particular for $q$ solution to \eqref{eq:forward_linear}
\begin{equation*}
\mathcal{L}(q) = - \mathbb E \int_{\mathcal O} y_T q(T) dx = \mathbb E \int_{0}^{T} \int_{\mathcal O_0} h q dx dt\qquad \forall q_0 \in L^2_{\F_0}(\Omega;L^{2}(\mathcal O)).
\end{equation*}
As before we easily deduce by Itô's formula that $y(0) = 0$ in $\mathcal O$ a.s.

Our goal is then to prove that $h \in L_{\fil}^{\infty}(0,T;L^{\infty}(\mathcal O_0))$ and
\begin{equation}
\label{eq:boundhlinftyproof}
\|h\|_{L_{\fil}^{\infty}(0,T;L^{\infty}(\mathcal O_0))} \leq   2 C \|y_T\|_{L^{\infty}_{\F_T}(\Omega;L^{\infty}(\mathcal O))}.
\end{equation}
From \eqref{eq:normfunctionalL1} and \eqref{eq:equationcontrolFunctionalL2}, we deduce that
\begin{equation}
\label{eq:EstimationObservabilityWithL}
\left| \mathbb E \int_{0}^{T} \int_{\mathcal O_0} h q dx dt \right| \leq C \|q\|_{L^1_{\fil}(0,T;L^1(\mathcal O))} \|y_T\|_{L^{\infty}_{\F_T}(\Omega;L^{\infty}(\mathcal O))} \quad \forall q \in L^{2}_{\fil}(0,T;L^{2}(\mathcal O_0)).
\end{equation}
Then we take
\begin{equation*}
q = \text{sign}(h) \mathbf{1}_{\{|h| > 2 C \|y_T\|_{L^{\infty}_{\F_T}(\Omega;L^{\infty}(\mathcal O))}\}} \mathbf{1}_{\mathcal O_0} ,
\end{equation*}
so the left hand side of \eqref{eq:EstimationObservabilityWithL} is bounded from below by
\begin{equation*}
2 C \|y_T\|_{L^{\infty}_{\F_T}(\Omega;L^{\infty}(\mathcal O))} \mu_{\Omega, [0,T], \mathcal O_0}(|h| > 2 C \|y_T\|_{L^{\infty}_{\F_T}(\Omega;L^{\infty}(\mathcal O))}),
\end{equation*}
while the right hand side of \eqref{eq:EstimationObservabilityWithL} translates into
\begin{equation*}
C \|y_T\|_{L^{\infty}_{\F_T}(\Omega;L^{\infty}(\mathcal O))} \mu_{\Omega, [0,T], \mathcal O_0}(|h| > 2 C \|y_T\|_{L^{\infty}_{\F_T}(\Omega;L^{\infty}(\mathcal O))}),
\end{equation*}
where $\mu_{\Omega, [0,T], \mathcal O_0}$ denotes the product measure on $\Omega \times [0,T] \times \mathcal O$, so we get
\begin{multline*}
2 C \|y_T\|_{L^{\infty}_{\F_T}(\Omega;L^{\infty}(\mathcal O))} \mu_{\Omega, [0,T], \mathcal O_0}(|h| > 2 C \|y_T\|_{L^{\infty}_{\F_T}(\Omega;L^{\infty}(\mathcal O))})\\
 \leq C  \|y_T\|_{L^{\infty}_{\F_T}(\Omega;L^{\infty}(\mathcal O))}\mu_{\Omega, [0,T], \mathcal O_0}(|h| > 2 C \|y_T\|_{L^{\infty}_{\F_T}(\Omega;L^{\infty}(\mathcal O))}).
\end{multline*}
That is a contradiction unless we have
\begin{equation*}
\mu_{\Omega, [0,T], \mathcal O_0}(|h| > 2 C \|y_T\|_{L^{\infty}_{\F_T}(\Omega;L^{\infty}(\mathcal O))}) = 0.
\end{equation*}
This exactly means that we have
\begin{equation*}
|h| \leq  2 C \|y_T\|_{L^{\infty}_{\F_T}(\Omega;L^{\infty}(\mathcal O))} \quad \text{a.e. }(t,x)\in Q_T,\ \text{a.s.}
\end{equation*}
This concludes the proof of \eqref{eq:boundhlinftyproof}, then the one of \Cref{thm:control}.
\end{proof}

\subsection{The nonlinear control problem}

The proof of \Cref{theo:control_semi} relies on the so-called source term method, originally developed in \cite{LLT13} for the deterministic setting and later extended to SPDEs in \cite{HSLBP20a}. While \cite{HSLBP20a} already discusses this method for BSPDEs, here we provide a more comprehensive treatment and extend the analysis to our setting.

Let us consider a slightly more general version of the linear equation \eqref{eq:backward_linear}, namely
\begin{equation}\label{eq:backward_linear_source}
\begin{cases}
\d{y}=-\left(\Delta y + \alpha y + \chi_{\dom_0}h+S\right)\d{t}+Y\d{W}(t) & \text{in } Q_T, \\
y=0 & \text{on } \Sigma_T, \\
y(T)=y_T & \text{in } \dom,
\end{cases}
\end{equation}
where $S$ is an external source term that will be taken from a suitable functional space and $h$ is a control.

From \Cref{thm:new_space}, we can readily deduce that if the control $h\in L^\infty_{\fil}(0,T;L^\infty(\dom_0))$ and the source term $S\in L^\infty_{\fil}(0,T;L^\infty(\dom))$, then \eqref{eq:backward_linear_source} is well-posed and $(y,Y)\in\mathscr Y_p$ for any $p\in[2,+\infty)$ (recall \eqref{eq:Yp_space}). On the other hand, \Cref{thm:control} already tells us that if $S\equiv 0$, then \eqref{eq:backward_linear_source} is null-controllable in $L^\infty$. Our first aim is to see that if $S$ belongs to an appropriate space, then \eqref{eq:backward_linear_source} remains null-controllable in $L^\infty$.

To this end, we fix $M \geq 1$ such that 
\begin{equation}\label{cost_M}
e^{C/T} \leq M e^{M/T},
\end{equation}
where $C$ is the constant provided by \Cref{thm:control} (with $p=+\infty$). For some parameters $Q \in (1,\sqrt{2})$ and $R > Q^2/(2-Q^2)$, we define
\begin{align}\label{weight_source}
\rho(t)&:=\exp\left(-\frac{(1+R)Q^2 M}{(Q-1)t}\right), \\ \label{weight_control} 
\rho_0(t)&:=\exp\left(-\frac{R M}{(Q-1)t}\right),
\end{align}
which are strictly increasing and vanish as $t \to 0^+$. 

Using these functions, we introduce the following weighted spaces
\begin{align}
\mathscr S&:=\left\{S\in L^\infty_{\fil}(0,T;L^\infty(\dom)):\ \frac{S}{\rho}\in L^\infty_{\fil}(0,T;L^\infty(\dom))\right\}, \\
\mathscr E&:=\left\{y\in L^\infty_{\fil}(0,T;L^\infty(\dom)):\ \frac{y}{\rho_0}\in L^\infty_{\fil}(0,T;L^\infty(\dom))\right\}, \\
\mathscr H&:=\left\{h\in L^\infty_{\fil}(0,T;L^\infty(\dom_0)):\ \frac{h}{\rho_0}\in L^\infty_{\fil}(0,T;L^\infty(\dom_0))\right\}.
\end{align}

We have the following result. 

\begin{proposition}\label{prop:source}
For every $S\in\mathscr S$ and $y_T\in L^\infty_{\mathcal F_T}(\Omega;L^\infty(\Omega))$, there is a control $h\in \mathscr H$ such that $y$ the first component of the solution $(y,Y)\in \mathscr Y_p$ to \eqref{eq:backward_linear_source} satisfies $y\in \mathscr E$. Moreover, the exists a constant $C>0$ independent of $S$ and $y_T$ such that 
\begin{align}\notag 
&\|y/\rho_0\|_{L^\infty_{\fil}(0,T;L^\infty(\dom))} + \|h/\rho_0\|_{L^\infty_{\fil}(0,T;L^\infty(\dom))} + \|Y\|_{L^p_{\fil}(\Omega;L^2(0,T;L^2(\dom)))}\\ \label{eq:linf_est_source}
&\qquad \leq Ce^{C /T}  \left( \norme{y_T}_{L^{\infty}_{\mathcal F_T}(\Omega;L^{\infty}(\mathcal O))} + \norme{S/\rho}_{L^{\infty}_{\fil}(0,T;L^{\infty}(\dom))}\right),
\end{align}
\end{proposition}
\begin{proof}
The proof follows the same steps of \cite[Proposition 2.5]{HSLBP20a} with some adaptations to the backward case and the $L^\infty$-setting (cf. \cite{LB20}). 

Let $T_k:=TQ^{-k}$ where we recall that $Q\in (1,\sqrt{2})$ is a given parameter. Let $a_0:=y_T$ and for $k\in\mathbb N$ we define $a_{k+1}:=y_S(T_{k+1},\cdot)$ where $y_S$ is the first component of $(y_S,Y_S)$ the solution to the uncontrolled system
\begin{equation}\label{eq:backward_linear_source_split_S}
\begin{cases}
\d{y_S}=-\left(\Delta y_S + \alpha y_S +S\right)\d{t}+Y_S\d{W}(t) & \text{in } (T_{k+1},T_k)\times\dom, \\
y_S=0 & \text{on } (T_{k+1},T_k)\times\dom, \\
y_S(T_k)=0 & \text{in } \dom,
\end{cases}
\end{equation}
We observe that by \Cref{thm:new_space}, $y_S$ is a continuous adapted function in $[T_{k+1},T_k]$, $a_{k+1}$ is $\mathcal F_{T_{k+1}}$ measurable and belongs to $L^\infty(\Omega\times\dom)$. In particular, by estimate \eqref{est:Yp_full} 
\begin{equation}\label{eq:est_akp1}
\|a_{k+1}\|_{L^\infty(\Omega \times \dom)}\leq \|y\|_{L^\infty_{\fil}(T_{k+1},T_k;L^\infty(\dom))} \leq \|(y,Y)\|_{\mathscr Y_p} \leq C\|S\|_{L^\infty_{\fil}(T_{k+1},T_k;L^\infty(\dom))}.
\end{equation}

Let $\{a_k\}_{k\in \mathbb{N}}$ be the sequence defined by the above procedure. For each $k\in\mathbb{N}$, consider the controlled equation (without a source term) given by
\begin{equation}\label{eq:backward_linear_source_split_H}
\begin{cases}
\d{y_H}=-\left(\Delta y_H + \alpha y_H + \chi_{\dom_0}h_k\right)\d{t} + Y_H\d{W}(t) & \text{in } (T_{k+1},T_k)\times\dom, \\
y_H=0 & \text{on } (T_{k+1},T_k)\times\partial\dom, \\
y_H(T_k)=a_k & \text{in } \dom.
\end{cases}
\end{equation}
This system is well-posed (in the sense of \Cref{thm:new_space}) for each $k\in\mathbb{N}$ due to the properties of $a_k$ mentioned above. In view of \Cref{thm:control}, we can obtain controls $h_k\in L^\infty_{\fil}(T_{k+1},T_k;L^\infty(\dom_0))$ such that $y_H(T_{k+1})=0$. Moreover, using \eqref{cost_M} and the precise estimate \eqref{eq:EsimationControlhLp}, it can be deduced that
\begin{equation}\label{est_control_k}
\|h_k\|_{L^\infty_{\fil}(T_{k+1},T_k;L^\infty(\dom_0))} \leq M e^{\frac{M}{T_{k}-T_{k+1}}} \|a_{k}\|_{L^\infty_{\mathcal F_{T_k}}(\Omega;L^\infty(\dom))}, \qquad \forall k\in\mathbb{N}.
\end{equation}
In particular, for $k=0$, we have
\begin{equation}
\|h_0\|_{L^\infty_{\fil}(T_{1},T;L^\infty(\dom_0))} \leq M e^{\frac{MQ}{T(Q-1)}} \|y_T\|_{L^\infty_{\mathcal F_{T}}(\Omega;L^\infty(\dom))},
\end{equation}
and, from the definition of $\rho_0$ (see \eqref{weight_control}), we see that $\rho_0(t_1)<\rho_0(t_2)$ for every $0\leq t_1<t_2\leq T$, whence
\begin{equation}\label{eq:est_control_0}
\|h_0/\rho_0\|_{L^\infty_{\fil}(T_{1},T;L^\infty(\dom_0))} \leq \rho_0^{-1}(T_1)M e^{\frac{MQ}{T(Q-1)}} \|y_T\|_{L^\infty_{\mathcal F_{T}}(\Omega;L^\infty(\dom))}
\end{equation}

On the other hand, putting together \eqref{eq:est_akp1} and \eqref{est_control_k} yields
\begin{equation}
\|h_{k+1}\|_{L^\infty_{\fil}(T_{k+2},T_{k+1};L^\infty(\dom))}\leq CMe^{\frac{M}{T_{k+1}-T_{k+2}}}\|S\|_{L^\infty_{\fil}(T_{k+1},T_k;L^\infty(\dom))}
\end{equation}
Similar as above, from \eqref{weight_source} we have $\rho(t_1)<\rho(t_2)$ for every $0\leq t_1<t_2\leq T$ and therefore
\begin{equation}
\|h_{k+1}\|_{L^\infty_{\fil}(T_{k+2},T_{k+1};L^\infty(\dom))}\leq CMe^{\frac{M}{T_{k+1}-T_{k+2}}}\rho(T_k)\|S/\rho\|_{L^\infty_{\fil}(T_{k+1},T_k;L^\infty(\dom))}.
\end{equation}
We note that by definitions \eqref{weight_source} and \eqref{weight_control}, the identity $e^{\frac{M}{T_{k+1}-T_{k+2}}}\rho(T_k)=\rho_0(T_{k+2})$ holds. Thus,
\begin{equation}
\|h_{k+1}\|_{L^\infty_{\fil}(T_{k+2},T_{k+1};L^\infty(\dom))}\leq CM\rho_0(T_{k+2})\|S/\rho\|_{L^\infty_{\fil}(T_{k+1},T_k;L^\infty(\dom))},
\end{equation}
and since $\rho_0(T_{k+2})\leq \rho(t)$ for all $t\in(T_{k+2},T_{k+1})$
\begin{equation}\label{eq:est_control_source}
\|h_{k+1}/\rho_0\|_{L^\infty_{\fil}(T_{k+2},T_{k+1};L^\infty(\dom))}\leq CM\|S/\rho\|_{L^\infty_{\fil}(T_{k+1},T_k;L^\infty(\dom))}, \qquad \forall k\in\mathbb N.
\end{equation}

Let us define $h(t,\cdot):=\sum_{k\in\mathbb{N}}1_{\left(T_{k+1},T_{k}\right)}(t)h_k(t,\cdot)$, where each control $h_k$ satisfies \eqref{eq:est_control_source} for $k\geq 1$ and \eqref{eq:est_control_0} if $k=0$. Defined in this way, since each interval $(T_{k+1},T_k)$ is disjoint and $h_k$ are independent processes for each $k\in\mathbb{N}$, we have that $h\in L^\infty_{\fil}(0,T;L^\infty(\dom))$. Moreover, by \eqref{eq:est_control_0}, \eqref{eq:est_control_source}, and using the fact that the essential supremum over disjoint intervals equals the supremum of the individual norms, we get
\begin{align}\notag
\|h/\rho_0\|_{L^\infty_{\fil}(0,T;L^\infty(\dom))}
&=\sup_{k\in\mathbb{N}} \|h_k/\rho_0\|_{L^\infty_{\fil}(T_{k+1},T_k;L^\infty(\dom))} \\ \notag
&\leq \rho_0^{-1}(T_1) M e^{\frac{MQ}{T(Q-1)}} \|y_T\|_{L^\infty_{\mathcal{F}_{T}}(\Omega;L^\infty(\dom))}
+ C M \sup_{k\in\mathbb{N}} \|S/\rho\|_{L^\infty_{\fil}(T_{k+1},T_k;L^\infty(\dom))} \\ \label{est:control_sum}
&= C^\prime e^{C^\prime/T} \|y_T\|_{L^\infty_{\mathcal{F}_{T}}(\Omega;L^\infty(\dom))} + C^\prime \|S/\rho\|_{L^\infty_{\fil}(0,T;L^\infty(\dom))}
\end{align}
for some constant $C^\prime>0$ uniform with respect to $y_T$ and $S$.

In a similar way, we can build the solution to \eqref{eq:backward_linear_source} by gluing together $y_S$ and $y_H$, the solutions to \eqref{eq:backward_linear_source_split_S} and \eqref{eq:backward_linear_source_split_H}, respectively. Indeed, from \Cref{thm:new_space} and recalling \eqref{eq:Yp_space}, we have for each time interval $(T_{k+1},T_k)$
\begin{equation}\label{est_ys_inter}
\|y_S\|_{L^\infty_{\fil}(T_{k+1},T_k;L^\infty(\dom))} \leq C \|S\|_{L^\infty_{\fil}(T_{k+1},T_k;L^\infty(\dom))}
\end{equation}
and
\begin{equation}\label{est_yh_inter}
\|y_H\|_{L^\infty_{\fil}(T_{k+1},T_k;L^\infty(\dom))} \leq C \left(\|a_k\|_{L^\infty_{\mathcal{F}_{T_k}}(\Omega;L^\infty(\dom))} + \|h_k\|_{L^\infty_{\fil}(T_{k+1},T_k;L^\infty(\dom))}\right).
\end{equation}
We note that, by construction, $y(t,\cdot)=y_S(t,\cdot)+y_H(t,\cdot)$ is a continuous adapted function in each subinterval $(T_{k+1},T_{k})$ and, moreover, it is continuous at the junction times $T_k$, $k\in\mathbb{N}$. Furthermore, we can deduce from \eqref{eq:est_control_0}, \eqref{eq:est_control_source}, \eqref{est_ys_inter}, and \eqref{est_yh_inter}, using an argument similar to that for the control, that
\begin{align}\notag
\|y/\rho_0\|_{L^\infty_{\fil}(0,T;L^\infty(\dom))}
&\leq \|y_H/\rho_0\|_{L^\infty_{\fil}(0,T;L^\infty(\dom))} + \|y_S/\rho_0\|_{L^\infty_{\fil}(0,T;L^\infty(\dom))} \\ \notag
&= \sup_{k\in\mathbb{N}} \left( \|y_H/\rho_0\|_{L^\infty_{\fil}(T_{k+1},T_k;L^\infty(\dom))} + \|y_S/\rho_0\|_{L^\infty_{\fil}(T_{k+1},T_k;L^\infty(\dom))} \right) \\ \notag
&\leq C \rho_0^{-1}(T_1) M e^{\frac{MQ}{T(Q-1)}} \|y_T\|_{L^\infty_{\mathcal{F}_{T}}(\Omega;L^\infty(\dom))}
+ C M^{-1} \sup_{k\in\mathbb{N}} \|S/\rho\|_{L^\infty_{\fil}(T_{k+1},T_k;L^\infty(\dom))} \\ \label{est_sum_estate}
&= C^\prime e^{C^\prime/T} \|y_T\|_{L^\infty_{\mathcal{F}_{T}}(\Omega;L^\infty(\dom))} + C^{\prime\prime} \|S/\rho\|_{L^\infty_{\fil}(0,T;L^\infty(\dom))}.
\end{align}
for some constant $C^{\prime\prime}>0$ independent of $S$. 

Estimate \eqref{eq:linf_est_source} follows from \eqref{est:control_sum}, \eqref{est_sum_estate}, and by choosing an appropriate constant $C>0$, which is uniform with respect to $S$ and $y_T$. To add the estimate on $Y$ in \eqref{eq:linf_est_source} we use \eqref{est:Yp_full} and the fact that $\rho$ is bounded in $[0,T]$. This ends the proof.
\end{proof}

\begin{rmk}
If the component $y$ of the solution $(y,Y)\in\mathscr Y_p$ of \eqref{eq:backward_linear_source} satisfies $y\in\mathscr E$, then $y(0,\cdot)=0$ in $\Omega$ a.s.
\end{rmk}

We are now in position to prove our main controllability result. 

\begin{proof}[Proof of \Cref{theo:control_semi}]
Let us define the following Banach space
$$ \mathscr Z_p = \mathscr{E} \times L^{p}_{\fil}(\Omega;L^2(0,T;L^2(\dom))).$$

By Taylor's formula at second order, we have that
\begin{equation}\label{eq:taylorcontrol}
\forall s\in\mathbb R, \quad f(s)=f^\prime(0)s+s^2G(s), \quad 
\end{equation}
where $G(s)=\int_0^1 (1-\sigma) f^{\prime\prime}(\sigma s)\d{\sigma}$. For any $r\in(0,1)$, let 
\begin{equation}\label{Y_rcontrol}
\mathscr Z^r_p:=\{(y,Y)\in\mathscr Z_p: \|y\|_{\mathscr E} + \|Y\|_{L^p_{\fil}(\Omega;L^2(0,T;L^2(\dom)))}\leq r\},
\end{equation} and consider the linear controlled equation
\begin{equation}
\label{eq:linearizedControl}
\begin{cases}
\d{y}=-\left(\Delta y + \alpha y + f^\prime(0) y + \ov{y}^2G(\ov y) + \chi_{\mathcal{O}_0} h \right)\d{t}+Y\d{W}(t) &\text{in } Q_T, \\
y=0 &\text{on } \Sigma_T, \\
y(T)=y_T &\text{in }\dom,
\end{cases}
\end{equation}
where $\ov y$ is the first entry of any given pair $(\ov y, \ov Y)\in\mathscr Z_p^r$ and $y_T\in L^\infty_{\F_T}(\Omega;L^\infty(\dom))$. 

Let us define the following multi-valued contraction mapping
\begin{equation}
\mathcal{N} : (\overline{y}, \overline{Y}) \in \mathscr Z_p^r \to 2^{\mathscr Z_p^r},
\end{equation}
such that for  every $(\overline{y}, \overline{Y}) \in \mathscr Z_p^r$, $(y,Y)$ belongs to $\mathcal{N} ((\overline{y}, \overline{Y}))$, if there exists $h \in \mathscr H$ such that $(y,Y) \in \mathscr Z_p^r$ is the solution of \eqref{eq:linearizedControl} and estimate \eqref{eq:linf_est_source} holds.

Since $f\in C^\infty(\mathbb R)$, from point \ref{eq:est_sec} of \Cref{lem:nonlinearity}, we have that $|G(s)|\leq M$ for all $s\in[-2,2]$ where the constant $M>0$ only depends on $f$. Moreover, we have from the definitions of the weights $\rho$ and $\rho_0$ that there exists $C>0$ such that $|(\rho_0^2 / \rho)(t)| \leq C$ for every $t\in[0,T]$. Hence, defining $\ov S:=\ov y^2G(\ov y)$ we have
\begin{align}\notag 
\|\ov{S}/\rho\|_{L^\infty_{\fil}(0,T;L^\infty(\dom))}&=\|\ov y^2G(\ov y)/\rho\|_{L^\infty_{\fil}(0,T;L^\infty(\dom))} \\ \notag
&\leq C M \|\ov{y}^2 /\rho_0^2\|_{L^\infty_{\fil}(0,T;L^\infty(\dom))} \\
\label{eq:est_infty_poids}
&\leq C  M\|\ov y/\rho_0\|_{L^\infty_{\fil}(0,T;L^\infty(\dom))}^2\leq C M r^2 < +\infty,
\end{align}
for any $(\ov y, \ov Y)\in\mathscr Z_p^r$. 

Thus, using \Cref{prop:source} with this particular $\ov{S}$ and estimate \eqref{eq:linf_est_source} yield that there is a controlled trajectory solution $((y,Y), h)\in\mathscr Z_p \times \mathscr{H}$ to \eqref{eq:linearizedControl} such that 
\begin{align}\label{eq:est_y_control}
\|y\|_{\mathscr E} + \|h\|_{\mathcal{H}} + \|Y\|_{L^p_{\fil}(\Omega;L^2(0,T;L^2(\dom)))} &\leq C\left(\|y_T\|_{L^\infty_{\F_T}(\Omega;L^\infty(\dom))}+Mr^2\right)
\end{align}
for some $C>0$ only depending on $\dom$, $T$ and $p$. Let us take 
\begin{equation*}
r\leq \frac{1}{8CM},
\end{equation*}
 where $C$ is the constant appearing in \eqref{eq:est_y_control} and fix $\delta>0$ such that $\delta \leq r/2C$. Thus for any $y_T\in L^\infty_{\F_T}(\Omega;L^\infty(\dom))$ verifying $\|y_T\|_{L^\infty_{\F_T}(\Omega;L^\infty(\dom))}\leq \delta$ we have
\begin{equation}\label{eq:est_y_r_control}
\|y\|_{\mathscr E} + \|h\|_{\mathcal{H}} + \|Y\|_{L^p_{\fil}(\Omega;L^2(0,T;L^2(\dom)))}\leq \frac{r}{2}+\frac{r}{8}=  \frac{5r}{8}\leq r.
\end{equation}
This shows that for every $(y,Y) \in \mathscr Z_p^r$,  $\mathcal N(y,Y)$ is nonempty. 

Moreover, for every $(y,Y) \in \mathscr Z_p^r$, from the estimate \eqref{eq:linf_est_source} it is straightforward to check that that $\mathcal N(y,Y)$ is a bounded and closed set of $\mathscr Z_p^r$.

To conclude, we will verify that $\mathcal N$ is a multi-valued contraction mapping on the Banach space $\mathscr Z_p^r$ and use the multi-valued Banach fixed point theorem from \cite[Theorem 5]{Nad69}. By point \ref{eq:est_dif_G} of \Cref{lem:nonlinearity} with $\mathcal I=[-2,2]$, we have that for  any $\ov y_1$, $\ov y_2$ taken from the pairs $(\ov{y}_{i},\ov{Y}_{i})\in\mathscr Z_p^r$, $i=1,2$,
\begin{align}\notag
&\|(\ov{y}_1G(\ov y_1)-\ov{y}_2 G(\ov y_2))/\rho\|_{L^\infty_{\fil}(0,T;L^\infty(\dom))}\\ \notag
&\qquad \leq C M \|(\ov{y}_1-\ov{y}_2)/\rho_0\|_{L^\infty_{\fil}(0,T;L^\infty(\dom))}\left(\|\ov{y}_1/\rho_0\|_{L^\infty_{\fil}(0,T;L^\infty(\dom))}+\|\ov{y}_2/\rho_0\|_{L^\infty_{\fil}(0,T;L^\infty(\dom))}\right) \\ \label{est:rhs_control}
&\quad \qquad +C  M_1\|y_2/\rho_0\|^2_{L^\infty_{\fil}(0,T;L^\infty(\dom))} \|(\ov{y}_1-\ov{y}_2)/\rho_0\|_{L^\infty_{\fil}(0,T;L^\infty(\dom))},
\end{align}
for some positive constants $M,M_1$ only depending on $f$. Above, we have used once again that $|(\rho_0^2 / \rho)(t)| \leq C$ for every $t\in[0,T]$.

Denote by $(y_i,Y_i) \in \mathcal{N}(\ov{y_i},\ov{Y_i})$ the solution of \eqref{eq:linearizedControl} for the corresponding $\ov{y}_i$, $i=1,2$. From the linearity of \eqref{eq:linearizedControl}, \Cref{prop:source} with $\ov{S}=\ov{y}_1G(\ov y_1)-\ov{y}_2 G(\ov y_2)$ and $y_T\equiv 0$, and estimate \eqref{est:rhs_control}, we can compute
\begin{align}\notag
&\|\mathcal ({y}_1,{Y}_1)- ({y}_2,{Y}_2)\|_{\mathscr Z_p} \\ \notag
&\qquad \leq C M \|(\ov{y}_1-\ov{y}_2)/\rho_0\|_{L^\infty_{\fil}(0,T;L^\infty(\dom))}\left(\|\ov{y}_1/\rho_0\|_{L^\infty_{\fil}(0,T;L^\infty(\dom))}+\|\ov{y}_2/\rho_0\|_{L^\infty_{\fil}(0,T;L^\infty(\dom))}\right)  \\ \notag
&\quad \qquad + C  M_1\|y_2/\rho_0\|^2_{L^\infty_{\fil}(0,T;L^\infty(\dom))} \|(\ov{y}_1-\ov{y}_2)/\rho_0\|_{L^\infty_{\fil}(0,T;L^\infty(\dom))} \\
& \qquad \leq 2CM r \|(\ov{y}_1-\ov{y}_2)/\rho_0\|_{L^\infty_{\fil}(0,T;L^\infty(\dom))}
 + CM_1 r^2 \|(\ov{y}_1-\ov{y}_2)/\rho_0\|_{L^\infty_{\fil}(0,T;L^\infty(\dom))}.\label{eq:est_r_control}
\end{align}
From \eqref{eq:est_r} and decreasing (if necessary) the value of $r$ and taking $r\leq \min\{\frac{1}{8CM},\frac{1}{\sqrt{4CM_1}}\}$, we get
\begin{align*}
\|\mathcal ({y}_1,{Y}_1)- ({y}_2,{Y}_2)\|_{\mathscr Z_p} \leq \frac{1}{2}\|(\ov{y}_1-\ov{y}_2)/\rho_0\|_{L^\infty_{\fil}(0,T;L^\infty(\dom))}\leq  \frac{1}{2} \| (\ov y_1,\ov Y_1)-(\ov y_2,\ov Y_2)\|_{\mathscr Z_p}.
\end{align*}
Thus $\mathcal N$ is a multi-valued contraction mapping on the Banach space $\mathscr Z_p^r$. Hence there is a fixed point for $\mathcal N$ which is a weak solution to \eqref{eq:backward_nonlinear} thanks to \eqref{eq:taylorcontrol} and that belongs to $\mathscr Z_p^r$ since  \eqref{eq:est_y_r_control} holds. 
\end{proof}

\begin{rmk}
The following comments are in order.

\begin{itemize}
\item In the proof of \Cref{theo:control_semi}, we employed a multi-valued fixed point theorem, in contrast to the standard Banach fixed point argument used in \Cref{thm:semi}. The main reason for this is that \Cref{prop:source} guarantees only the existence of a control that steers the solution of \eqref{eq:backward_linear_source} to zero at $t=0$. Similar strategies based on multi-valued fixed point theorems have been used in the control literature; see, for instance, the use of Kakutani's fixed point theorem in \cite{FCZ00}.
\item On the other hand, although the control produced in \Cref{theo:control_semi} is not unique, the corresponding controlled trajectory of \eqref{eq:backward_nonlinear} is. Once a control steering the solution to zero is fixed, uniqueness follows by an argument similar to the one at the end of the proof of \Cref{thm:semi}.
\end{itemize}
\end{rmk}

{
\section{On the optimality of the regularity}\label{new_sec_reg}

In this section, we provide additional remarks on the regularity results established in Theorems \ref{thm:ito} and \ref{thm:infinity}. By combining these results, we deduce that if $y_T\in L^\infty_{\mathcal F_T}(\Omega;L^\infty(\dom))$ and $F\in L^\infty_{\fil}(0,T;L^\infty(\dom))$, then the solution to \eqref{eq:intro} satisfies
\begin{equation}
(y,Y)\in L^\infty_{\fil}(0,T;L^\infty(\dom))\times L^p_{\fil}(\Omega;L^2(0,T;L^2(\dom)))
\end{equation}
for every $p\ge 2$. However, from the proof of \Cref{thm:infinity}, which itself relies on \Cref{prop:linf_uniform}, it is not evident whether the regularity of $Y$ (in $\Omega$) can be improved to $L^\infty$. In fact, the term involving $Y$ was explicitly removed in the derivation of the estimates (see, for example, inequality \eqref{est:lp_ft}). Therefore, the purpose of this section is to present an example showing that such an improvement is, in general, not possible in the simpler case of backward ordinary stochastic  differential equations (BSDE).

To this end, let $T>0$ be a fixed deterministic time and consider the BSDE
\begin{equation}\label{eq:sode}
\begin{cases}
\d y = \lambda y \dt + Y \d W(t) \quad t\in(0,T), \\
y(T) = y_T,
\end{cases}
\end{equation}
where $\lambda> 0$ is a constant. Note that this equation can be interpreted as a projection of the linear stochastic system \eqref{eq:intro} onto a suitable subspace of $L^2$.

By standard well-posedness results for BSDEs (see, for instance, \cite[Chapter 7, Theorem 2.2]{YZ99}), equation \eqref{eq:sode} admits a unique adapted solution
\begin{equation}\label{reg_yY}
(y,Y)\in L^2_{\fil}(\Omega;C([0,T];\mathbb R))\times L^2_{\fil}(0,T;\mathbb R).
\end{equation}
We will show that one can choose a particular terminal condition $y_T\in L^\infty_{\mathcal F_T}(\Omega;\mathbb R)$ such that the corresponding solution satisfies $y\in L^\infty_{\fil}(0,T;\mathbb R)$
while ensuring that $Y\notin L^\infty_{\mathcal F}(\Omega;L^2(0,T;\mathbb R))$.

Consider $y_T=\mathbf{1}_{\{W_T\geq c\}}$ where $c>0$ is a fixed number and where we use the notation $W_{\cdot}:=W(\cdot)$. Note that by construction $y_T\in L^\infty_{\mathcal F_T}(\Omega;\mathbb R)$. Moreover, by using the change of variables $(z,Z):=e^{-\lambda t}(y,Y)$ and It\^{o}'s formula, equation \eqref{eq:sode} becomes
\begin{equation}\label{eq:sode_z}
\begin{cases}
\d z = Z\,\d W(t) \quad t\in(0,T), \\
z(T) = e^{-\lambda T}\mathbf{1}_{\{W_T\geq c\}}.
\end{cases}
\end{equation}
By the martingale representation theorem, it is not difficult to see that the process
\begin{equation}\label{sol_z_explicit}
z(t)=\E\left( e^{-\lambda T}\mathbf{1}_{\{W_T\geq c\}} \bigmid \mathcal F_t\right), \quad t\in(0,T),
\end{equation}
solves equation \eqref{eq:sode_z} (together with its initial condition). Moreover, by the change of variable $y=e^{\lambda t} z$, $y(T)=\mathbf{1}_{\{W_T\geq c\}}$ and by independence of the Brownian increments
\begin{align*}
y(t)&=e^{\lambda t}\E\left( e^{-\lambda T}\mathbf{1}_{\{W_T\geq c\}} \bigmid \mathcal F_t\right) = e^{-\lambda(T-t)}\mathbb P\left(W_{T-t}\geq c-W_t\right) \\
&= e^{-\lambda(T-t)} \int_{\mathbb R}\mathbb P\left(W_{T-t}\geq c-x\bigmid W_t=x\right)\mathbb P(W_t\in \d x),
\end{align*}
whence,
\begin{equation}\label{eq:y_exp}
y(t)=e^{-\lambda(T-t)}\Phi\left(\frac{c-W_t}{\sqrt{T-t}}\right),
\end{equation}
where $\Phi(x):=\displaystyle\int_{x}^{\infty}\frac{e^{-u^2/2}}{\sqrt{2\pi}}\d{u}$. Since $\|\Phi\|_{L^\infty(\mathbb R)}=\frac{1}{\sqrt{2\pi}}$ it follows that $y\in L^\infty (\Omega;L^\infty(0,T;\mathbb R))$. Moreover, $z$ is  adapted to $\fil$ by construction (see \eqref{sol_z_explicit}),  hence $y=e^{\lambda t}z$ is adapted as well, and therefore $y\in L^\infty_{\fil}(0,T;\mathbb R)$. 

Let us define $v(t):=\frac{c-W_t}{\sqrt{T-t}}$. Applying It\^{o}'s formula to \eqref{eq:y_exp} yields after a straightforward computation that
\begin{equation}
\d y(t)=\lambda e^{-\lambda(T-t)}\Phi(v(t))\dt - \frac{e^{-\lambda(T-t)}}{\sqrt{T-t}}\Phi^\prime(v(t))\d{W(t)}.
\end{equation}
Comparing this with \eqref{eq:sode} and \eqref{eq:y_exp}, we identify for $t\in(0,T)$
\begin{equation}\label{Y_iden}
Y(t)=  \frac{1}{\sqrt{2\pi}}\frac{e^{-\lambda(T-t)}}{\sqrt{T-t}} \exp\left(-\frac{1}{2}\left(\frac{c-W_t}{\sqrt{T-t}}\right)^2\right), \quad \text{a.s.}  
\end{equation}

\begin{proposition}\
The process $Y$ defined \eqref{Y_iden} does not belong to $ L^\infty_\fil(\Omega;L^2(0,T;\mathbb R))$.
\end{proposition}

\begin{proof}
Let us define the function 
\begin{equation*}
\omega\in \Omega \mapsto I(\omega):=\int_{0}^{T}\frac{1}{2\pi}\frac{e^{-2\lambda(T-t)}}{{T-t}} \exp\left(-\left(\frac{c-W_t(\omega)}{\sqrt{T-t}}\right)^2\right)\dt.
\end{equation*}
Note that $I(\omega)\geq 0$ and from \eqref{reg_yY}, $I(\omega)<+\infty$ for almost all $\omega\in \Omega$. So, to prove that $Y\notin L^\infty(\Omega;L^2(0,T))$, it is enough to show that for every given constant $M>0$
\begin{equation*}
\mathbb P\left(I\geq M\right)>0.
\end{equation*}

Let $\delta\in(0,T/4)$ be a given and consider the intervals
\begin{equation*}
I_j=\left[T-2^{-j}\delta, T-2^{-j-1}\delta\right]
\end{equation*}
so that $\cup_{j\in\mathbb N} I_j=[T-\delta, T]$. We note that $|I_j|=\delta 2^{-j-1}$. Let $\epsilon>0$ be a fixed quantity and  define the events
\begin{equation*}
A_j=\left\{ \sup_{t\in I_j}\left|W_t-W_{T-\delta 2^{-j}}\right|\leq \epsilon\sqrt{|I_j|} \quad\text{and}\quad |W_{T-\delta 2^{-j}}-c|\leq \epsilon \sqrt{|I_j|} \right\}.
\end{equation*}
We have the following intermediate result.

\begin{lemma}\label{claim}  
Let $j_0\in\mathbb N$ be large enough. Define $A^N:= \cap_{j=j_0}^{N} A_j$ for some natural number $N>j_0$. Then 
\begin{equation}\label{prodN}
\mathbb P(A^N) >0.
\end{equation}
\end{lemma}

We postpone the proof of this result to the end of this section. Assuming that Lemma \ref{claim} holds, we have that 
\begin{align}\label{est_I}
I(\omega) &\geq \frac{e^{-2\lambda T}}{2\pi}  \int_{\cup_{j=j_0}^{N}I_j} \frac{1}{{T-t}} \exp\left(-\left(\frac{c-W_t(\omega)}{\sqrt{T-t}}\right)^2\right)\dt.
\end{align}
Moreover, if $\omega\in A^N$, then for all $t\in I_j$ with $j\in\{j_0,\ldots,N\}$
\begin{equation}\label{est_Wt-c}
|W_t(\omega)-c|\leq \sup_{t\in I_j}\left|W_t(\omega)-W_{T-\delta 2^{-j}}(\omega)\right|+|W_{T-\delta 2^{-j}}(\omega)-c| \leq 2\epsilon \sqrt{|I_j|},
\end{equation}
and
\begin{equation}\label{est_T-t}
(T-t)^{-1}\leq 2^{j+1}\delta^{-1}.
\end{equation}
Putting together estimates \eqref{est_I} to \eqref{est_T-t}, we get
\begin{equation*}
I(\omega)\geq \frac{e^{-2\lambda T}}{2\pi} e^{-4\epsilon^2} \sum_{j=j_0}^{N}\int_{I_j} \frac{1}{T-t}\dt= \frac{e^{-2\lambda T}}{2\pi} e^{-4\epsilon^2} (N-j_0) \ln(2).
\end{equation*}
So, given any constant $M>0$, we can always find a finite number $N>0$ such that
\begin{equation*}
I(\omega)\geq M \quad\text{for all $\omega \in A^N$}.
\end{equation*}
Therefore, $\mathbb P(I\geq M)>0$ from \eqref{prodN} and this concludes the proof. 
\end{proof}

\begin{rmk}
Note that the probability of the events $A^N$ appearing in \eqref{prodN} may become very small as $N$ increases. However, since $N$ is finite, this probability is always strictly positive. This is consistent with the fact that $I<+\infty$ a.s.\ and highlights that one cannot obtain any uniform bound for the trajectories in $\Omega$.
\end{rmk}

We conclude this section with the following. 

\begin{proof}[Proof of Lemma \ref{claim}]
For simplicity, for all $i\in\{j_0,\ldots,N\}$, we set
\begin{align}
B_i &:= \left\{ \sup_{t\in I_i}\left|W_t - W_{T-\delta 2^{-i}}\right|
\leq \varepsilon \sqrt{|I_i|} \right\}
\end{align}
therefore
$A_i := \left\{ \left| W_{T-\delta 2^{-i}} - c \right|
\leq \varepsilon \sqrt{|I_i|} \right\}\cap B_i.$

Let $t_i := T - \delta 2^{-i}$ for $i\in\{j_0,\ldots,N\}$, and define the function
\begin{equation}\label{psi}
\psi_i(x)
:=
\mathbb P\!\left(
\bigcap_{k=i}^{N} A_k
\,\middle|\,
W_{t_i}=x
\right),
\qquad i=j_0,\ldots,N.
\end{equation}
The conditioning is understood in the sense of regular conditional
probabilities, which are well defined for almost every $x\in\mathbb R$ with respect to the
law of $W_{t_i}$.
The function $\psi_i$ represents the probability that all remaining events
from $i$ to $N$ are satisfied, given the position of the Brownian motion at
time $t_i$. This interpretation allows us to derive the following
recursive representation
\begin{align}\label{recursion}
\psi_i(x)
&=
\mathbb E\!\left(
\mathbf 1_{\bigl\{ |W_{t_i}-c|\le \varepsilon \sqrt{|I_i|} \bigr\}}
\,\mathbf 1_{B_i}
\,\mathbf 1_{\bigcap_{k=i+1}^N A_k}
\;\middle|\;
W_{t_i}=x
\right)\notag \\
&=
\mathbf 1_{\bigl\{ |x-c|\le \varepsilon \sqrt{|I_i|} \bigr\}}
\,\mathbb E\!\left(
\mathbf 1_{B_i}
\,\mathbf 1_{\bigcap_{k=i+1}^N A_k}
\;\middle|\;
W_{t_i}=x
\right).
\end{align}
The identity \eqref{recursion} follows by noting that,
under the conditioning \(W_{t_i}=x\), the constraint on \(W_{t_i}\) becomes
deterministic. Finally, exploiting the tower property of conditional expectation and the Markov property, we get 
\begin{align*}\label{recursion-1}
\psi_i(x)
&=
\mathbf 1_{\bigl\{ |x-c|\le \varepsilon \sqrt{|I_i|} \bigr\}}
\mathbb E\!\left(
\mathbf 1_{B_i}\,
\psi_{i+1}(W_{t_{i+1}})
\,\middle|\,
W_{t_i}=x
\right),
\qquad i=j_0,\ldots,N-1.
\end{align*}

To obtain the desired result, we look for a strictly positive lower bound for
$\psi_i$. Observe that if
$|x-c|>\varepsilon\sqrt{|I_i|}$, then $\psi_i(x)=0$. For this reason, it suffices to restrict our attention to $x$ close to $c$ and introduce
the interval
\begin{equation}
\tilde I_i := \left[\,c-\frac{\varepsilon \sqrt{|I_i|}}{4},\; c+\frac{\varepsilon \sqrt{|I_i|}}{4}\,\right].
\end{equation}
Observe that if $x\in \tilde I_i$, then $\mathbf 1_{\bigl\{ |x-c|\le \varepsilon \sqrt{|I_i|} \bigr\}}
=1$. We therefore define
\begin{equation}
q_i := \inf_{x\in \tilde I_i} \psi_i(x).
\end{equation}
We now proceed by backward induction to show that the quantities $q_i$ are
strictly positive for all $i\in\{j_0,\ldots,N\}$. For the terminal index $i=N$ and for all $x\in\tilde I_N$, we have $|x-c|\le \varepsilon\sqrt{|I_N|}$. Hence, we note that
\[
\psi_N(x)
= \mathbb P(B_N \mid W_{t_N}=x).
\]
Using the scaling property of Brownian motion we obtain
\begin{align}
\psi_N(x)
&=
\mathbb P\!\left(
\sup_{t\in I_N}\left|W_t-W_{t_N}\right|
\le \varepsilon \sqrt{|I_N|}
\right) \notag\\
&=
\mathbb P\!\left(
\sup_{s\in[0,1]}|\widetilde W_s|\le \varepsilon
\right)
=: \kappa_\varepsilon, \label{kappa}
\end{align}
where
\[
\left\{\frac{W_{t_N+s|I_N|}-W_{t_N}}{\sqrt{|I_N|}}\right\}_{s\in[0,1]}
\overset{d}{\sim}
\{\widetilde W_s\}_{s\in[0,1]},
\]
and $\widetilde W$ is a standard Brownian motion. Here $\kappa_\varepsilon>0$ is a positive constant independent of $N$. This constant is simply the probability that a standard Brownian motion
remains inside $[-\varepsilon,\varepsilon]$ throughout the
time interval $[0,1]$.
This is a classical fact concerning the distribution of the maximum of
Brownian motion, see, for instance, \cite[Section~6.7]{resnick2013adventures}. In particular, $q_N>0$.

Assume now that $q_{i+1}>0$ and let $x\in\tilde I_i$.
We show that $q_i>0$. In more detail,
\begin{align}
\psi_i(x)
&=
\mathbb E\!\left(
\mathbf 1_{B_i}\,
\psi_{i+1}(W_{t_{i+1}})
\,\middle|\,
W_{t_i}=x
\right) \notag\\
&\ge
q_{i+1}\,
\mathbb P\!\left(
B_i\cap\Big\{|W_{t_{i+1}}-c|\leq \tfrac{\varepsilon}{4\sqrt{2}}\sqrt{|I_i|}\Big\}
\,\middle|\,
W_{t_i}=x
\right).
\label{ineq_induction}
\end{align}
Here we used the definition of $q_{i+1}$ together with the fact that
$|I_{i+1}|=\tfrac{1}{2}|I_i|$. Indeed,
\[
|W_{t_{i+1}}-c|\le \tfrac{\varepsilon}{4\sqrt{2}}\sqrt{|I_i|}
= \tfrac{\varepsilon}{4}\sqrt{|I_{i+1}|}
\]
implies that $W_{t_{i+1}}\in \tilde I_{i+1}$, and therefore
\[
\psi_{i+1}(W_{t_{i+1}})\ge q_{i+1}.
\]

To bound the probability appearing in \eqref{ineq_induction}, we apply a
rescaling argument analogous to that used in \eqref{kappa}.
Conditioning on $W_{t_i}=x$ and using the scaling property
of Brownian motion, we obtain
\begin{align}
&\mathbb P\!\left(
B_i\cap\Big\{|W_{t_{i+1}}-c|\leq \tfrac{\varepsilon}{4\sqrt{2}}\sqrt{|I_i|}\Big\}
\,\middle|\,
W_{t_i}=x
\right) \notag\\
&=
\mathbb P\!\left(
\left\{\sup_{s\in[0,1]}|\widetilde W_s|\leq \varepsilon\right\}
\cap
\left\{|\widetilde W_1-m|\leq \tfrac{\varepsilon}{4\sqrt{2}}\right\}
\right),
\end{align}
where $m=\frac{c-x}{\sqrt{|I_i|}}$.
Since $x\in \tilde I_i$, we have
\[
|m|=\frac{|c-x|}{\sqrt{|I_i|}}\le \frac{\varepsilon}{4}.
\]
Consequently,
\[
\Big[-\tfrac{\varepsilon}{4\sqrt{2}}+m,\;
\tfrac{\varepsilon}{4\sqrt{2}}+m\Big]
\subset(-\varepsilon,\varepsilon).
\]

Therefore,
\begin{align}
&\mathbb P\!\left(
\left\{\sup_{s\in[0,1]}|\widetilde W_s|\le \varepsilon\right\}
\cap
\left\{|\widetilde W_1-m|\le \tfrac{\varepsilon}{4\sqrt{2}}\right\}
\right) \notag\\
&=
\mathbb P\big(\sup_{s\in[0,1]}|\widetilde W_s|\le \varepsilon\big)\,
\mathbb P\!\left(
\widetilde W_1\in\Big[-\tfrac{\varepsilon}{4\sqrt{2}}+m,\,
\tfrac{\varepsilon}{4\sqrt{2}}+m\Big]
\,\middle|\,
\sup_{s\in[0,1]}|\widetilde W_s|\le \varepsilon
\right).
\end{align}

Since the conditional law of $\widetilde W_1$ given
$\{\sup_{s\in[0,1]}|\widetilde W_s|\le \varepsilon\}$ admits a continuous
and strictly positive density on $(-\varepsilon,\varepsilon)$, and since
all admissible intervals above are contained in a fixed compact subset of $(-\varepsilon,\varepsilon)$, there exists a constant $c_\varepsilon>0$, depending only on $\varepsilon$, such that
\[
\mathbb P\!\left(
\widetilde W_1\in\Big[-\tfrac{\varepsilon}{4\sqrt{2}}+m,\,
\tfrac{\varepsilon}{4\sqrt{2}}+m\Big]
\,\middle|\,
\sup_{s\in[0,1]}|\widetilde W_s|\le \varepsilon
\right)
\ge c_\varepsilon,
\]
uniformly in $m$. Hence,
\[
\mathbb P\left(\left\{\sup_{s\in[0,1]}|\widetilde W_s|\le \varepsilon\right\}
\cap
\left\{|\widetilde W_1-m|\le \tfrac{\varepsilon}{4\sqrt{2}}\right\}\right)\ge \kappa_\varepsilon\, c_\varepsilon>0.
\]

By the induction hypothesis $q_{i+1}>0$, so it follows that
\[
\psi_i(x)\ge q_{i+1}\kappa_\varepsilon\, c_\varepsilon>0
\qquad \text{for all } x\in \tilde I_i.
\]
Since $q_i:=\inf_{x\in \tilde I_i} \psi_i(x)$, this proves that $q_i>0$. Therefore, $q_i>0$ for all $i\in\{j_0,\ldots,N\}$, and
\[
q_i\ge q_N(\kappa_\varepsilon c_\varepsilon)^{N-i}.
\]
Finally, by the law of total probability and the definition of the regular
conditional probability $\psi_{j_0}$, we obtain
\begin{align*}
\mathbb P\!\left(\bigcap_{k=j_0}^{N}A_k\right)
&=
\mathbb E\!\left(\psi_{j_0}(W_{t_{j_0}})\right) \\
&\ge
\mathbb E\!\left(
\psi_{j_0}(W_{t_{j_0}})
\mathbf 1_{\{W_{t_{j_0}}\in \tilde I_{j_0}\}}
\right) \\
&\ge
q_N (\kappa_\varepsilon c_\varepsilon)^{N-j_0}
\mathbb P\!\left(
|W_{t_{j_0}}-c|
\le \tfrac{\varepsilon \sqrt{\delta\,2^{-j_0-1}}}{4}
\right) \\
&\ge
q_N (\kappa_\varepsilon c_\varepsilon)^{N-j_0}
\tilde{\kappa}_{c,\varepsilon,T,\delta}\,\sqrt{|I_{j_0}|}
>0,
\end{align*}
where $\tilde\kappa_{c,\varepsilon,T,\delta}>0$ is a constant depending only on the parameters $c$, $\varepsilon$, $T$, and $\delta$.
This concludes the proof.
\end{proof}

\appendix

\section{Auxiliary results}
%
%

\begin{lemma}[Backward Grönwall inequality]\label{lem:back_gron}
 Let $g(t)$, $\alpha(t)$, $\beta(t)$ and $\gamma(t)$ be integrable functions with $\beta(t),\gamma(t)\geq 0$. For any $t\in[0,T]$, if 
\begin{equation*}
g(t)\leq \alpha(t)+\beta(t)\int_{t}^{T}\gamma(s)g(s)\d{s}, 
\end{equation*}
then 
\begin{equation*}
g(t)\leq \alpha(t)+\beta(t)\int_{t}^{T}\alpha(u)\gamma(u)e^{\int_{t}^{u}\beta(s)\gamma(s)\d{s}}\d{u}.
\end{equation*}
In particular, if $\alpha(t)\equiv \alpha$, $\beta(t)\equiv \beta$ and $\gamma(t)\equiv 1$, then
\begin{equation*}
g(t)\leq \alpha e^{\beta(T-t)}.
\end{equation*}
\end{lemma}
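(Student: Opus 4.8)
The plan is to transform the integral inequality into a differential inequality for the tail function $G(t):=\int_t^T\gamma(s)g(s)\,\d s$ and then to solve it with an integrating factor, extracting the bound from the vanishing boundary term at $t=T$.

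First I would note that $\gamma g\in L^1(0,T)$ (otherwise the hypothesis is meaningless), so $G$ is absolutely continuous on $[0,T]$ with $G(T)=0$ and $G'(t)=-\gamma(t)g(t)$ for a.e.\ $t$. Substituting the hypothesis $g(t)\le\alpha(t)+\beta(t)G(t)$ and using $\gamma(t)\ge 0$ gives, for a.e.\ $t\in[0,T]$,
\[
G'(t)+\gamma(t)\beta(t)G(t)\ \ge\ -\gamma(t)\alpha(t).
\]
Likewise $\gamma\beta\in L^1(0,T)$ is implicit in the statement of the conclusion, so the integrating factor $\mu(t):=\exp\big(\int_0^t\gamma(s)\beta(s)\,\d s\big)$ is well defined, positive, bounded and absolutely continuous on $[0,T]$.

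Next I would multiply the differential inequality by $\mu(t)>0$. Since a product of absolutely continuous functions on a compact interval is absolutely continuous and obeys the usual product rule almost everywhere, this yields $(\mu G)'(t)\ge -\mu(t)\gamma(t)\alpha(t)$ for a.e.\ $t$. Integrating over $[t,T]$ and using $G(T)=0$ (the crucial point being that the boundary term at $T$ drops out, which is exactly what makes the backward orientation work), we obtain $\mu(t)G(t)\le\int_t^T\mu(u)\gamma(u)\alpha(u)\,\d u$, that is,
\[
G(t)\ \le\ \int_t^T\frac{\mu(u)}{\mu(t)}\,\gamma(u)\alpha(u)\,\d u\ =\ \int_t^T\alpha(u)\gamma(u)\,e^{\int_t^u\beta(s)\gamma(s)\,\d s}\,\d u .
\]
Reinserting this into $g(t)\le\alpha(t)+\beta(t)G(t)$ delivers the announced estimate. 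For the ``in particular'' assertion, plugging $\alpha(t)\equiv\alpha$, $\beta(t)\equiv\beta$, $\gamma(t)\equiv 1$ into the right-hand side reduces it to $\alpha+\alpha\beta\int_t^T e^{\beta(u-t)}\,\d u=\alpha e^{\beta(T-t)}$ (the degenerate case $\beta=0$ being trivial).

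As a cross-check, one can also obtain the result by the time reversal $s\mapsto T-s$, which turns the hypothesis into the classical forward Grönwall inequality on $[0,T-t]$; applying its standard form and reversing back produces the same expression. I do not expect a genuine obstacle here: the only points requiring some care are the measure-theoretic regularity of $G$ and $\mu$ and the orientation of the integration, so that the term $G(T)$ vanishes rather than contributing; everything else is an elementary computation.
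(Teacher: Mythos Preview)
Your argument is correct and is the standard integrating-factor proof of the (backward) Gr\"onwall inequality; the only mild caveat is that integrability of $\gamma g$ and $\beta\gamma$ is not explicitly assumed in the lemma and must be read as implicit in the well-posedness of the hypothesis and conclusion, exactly as you indicate. The paper itself states this lemma as an auxiliary result without proof, so there is no approach to compare against.
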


\begin{lemma}\label{lem:limit}
Let $(X, \mathcal M, \mu)$ be a finite measure space and $f : X \to \mathbb R$ be a measurable function. We assume that there exists $K>0$ such that
\begin{equation}
\label{eq:uniformboundLp}
\forall p \geq 1,\ \norme{f}_{p} \leq K.
\end{equation}
Then $f \in L^{\infty}(X)$ and 
\begin{equation*}
\norme{f}_{\infty} \leq K.
\end{equation*}
\end{lemma}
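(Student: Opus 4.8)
The plan is to argue by contradiction, using the elementary fact that on a finite measure space the $L^p$-norms control the essential supremum in the limit $p\to+\infty$. Suppose the conclusion fails, i.e. either $f\notin L^{\infty}(X)$ or $\esssup_{X}|f|>K$. In both cases there exists $\varepsilon>0$ such that the super-level set
\[
A_{\varepsilon}:=\{x\in X:\ |f(x)|>K+\varepsilon\}
\]
has strictly positive measure. Since $\mu$ is a finite measure, we moreover have $0<\mu(A_{\varepsilon})\leq\mu(X)<+\infty$, and this non-degeneracy is exactly what will be used at the end.

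The next step is to bound the $L^p$-norm from below by restricting the integral to $A_{\varepsilon}$: for every $p\geq 1$,
\[
\norme{f}_{p}^{p}=\int_{X}|f|^{p}\,\d\mu\geq\int_{A_{\varepsilon}}|f|^{p}\,\d\mu\geq(K+\varepsilon)^{p}\,\mu(A_{\varepsilon}),
\]
hence $\norme{f}_{p}\geq(K+\varepsilon)\,\mu(A_{\varepsilon})^{1/p}$ for all $p\geq 1$.

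Finally I would let $p\to+\infty$. Because $0<\mu(A_{\varepsilon})<+\infty$, the elementary limit $t^{1/p}\to 1$ (valid for any fixed $t\in(0,+\infty)$) gives $\mu(A_{\varepsilon})^{1/p}\to 1$, so $\liminf_{p\to+\infty}\norme{f}_{p}\geq K+\varepsilon>K$, which contradicts the uniform bound \eqref{eq:uniformboundLp}. Consequently no such $\varepsilon$ can exist, which means $|f|\leq K$ $\mu$-a.e.; in particular $f\in L^{\infty}(X)$ with $\norme{f}_{\infty}\leq K$.

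There is essentially no obstacle in this argument. The only two points worth stating carefully are that a value of the essential supremum strictly above $K$ forces some set $\{|f|>K+\varepsilon\}$ to have positive measure (this is just the definition of $\esssup$, and it covers simultaneously the case $f\notin L^{\infty}$), and that the passage to the limit $\mu(A_{\varepsilon})^{1/p}\to 1$ relies precisely on the finiteness and positivity of $\mu(A_{\varepsilon})$ secured in the first step—so the finiteness hypothesis on $\mu$ is genuinely used.
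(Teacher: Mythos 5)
Your proof is correct and rests on the same mechanism as the paper's: a Chebyshev-type lower bound $\norme{f}_{p}\geq (K+\varepsilon)\,\mu(A_{\varepsilon})^{1/p}$ over a super-level set of positive measure, followed by the elementary limit $\mu(A_{\varepsilon})^{1/p}\to 1$ as $p\to+\infty$. The only difference is organizational: you run a single contradiction covering both the membership $f\in L^{\infty}(X)$ and the bound $\norme{f}_{\infty}\leq K$, whereas the paper first proves $f\in L^{\infty}(X)$ by the same super-level-set estimate and then invokes the standard fact $\norme{f}_{p}\to\norme{f}_{\infty}$.
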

\begin{proof}
First, we prove that $f \in L^{\infty}(X)$ by proceeding as follows. We argue by contradiction. We assume that $\norme{f}_{\infty} = +\infty$. Let $M>0$, and let us define $A_M = \{|f| \geq M\}$ then we have that $\mu(A_M) >0$, then take $p$ large enough such that $\mu(A_M)^{1/p} \geq 1/2$, then $\norme{f}_{p} \geq (\mu(A_M) M^p)^{1/p} \geq M/2$ and since $M$ is arbitrary, this is contradiction. Thus, this proves that $f \in L^{\infty}(X)$.

Then, we prove the bound. This is a well-known fact that because $f \in L^{\infty}(X)$, 
\begin{equation*}
\lim_{p \to +\infty} \norme{f}_{p} = \norme{f}_{\infty}.
\end{equation*}
Therefore, by using the uniform bound on $L^p$ spaces of $f$ i.e. \eqref{eq:uniformboundLp}, we deduce the result.
\end{proof}

\begin{lemma}\label{lem:nonlinearity}
Let $f\in C^\infty(\mathbb R)$ and define $G(s)=\int_0^1 (1-\sigma) f^{\prime\prime}(\sigma s)\d{\sigma}$.  For any bounded closed interval $\mathcal I\subset \mathbb R$ containing the origin, there are constants $M,M_1>0$ only depending on $f$ and $\mathcal I$ such that
\begin{enumerate}[label=\roman*)]
\item \label{eq:est_sec}  $|G(s)|\leq M$ for all $s\in \mathcal I$.
\item \label{eq:est_dif_G} For any $s_1,s_2\in \mathcal I$, $|s_1^2G(s_1)-s^2_2 G(s_2)|\leq M|s_1-s_2|\left(|s_1|+|s_2|\right)+M_1 \left|s_2\right|^2 |s_1-s_2|$.
\end{enumerate}
\end{lemma}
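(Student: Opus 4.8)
The plan is to work directly from the definition $G(s)=\int_0^1 (1-\sigma) f''(\sigma s)\,\d\sigma$, exploiting that $f\in C^\infty(\mathbb R)$ so that $f''$ and $f'''$ are continuous, hence bounded on any compact set. First I would fix a bounded closed interval $\mathcal I$ containing $0$, say $\mathcal I\subset[-a,a]$, and set $M_0:=\sup_{|r|\le a}|f''(r)|<+\infty$. For part \ref{eq:est_sec}, for $s\in\mathcal I$ we have $|\sigma s|\le a$ for all $\sigma\in[0,1]$, so
\[
|G(s)|\le \int_0^1 (1-\sigma)\,|f''(\sigma s)|\,\d\sigma \le M_0\int_0^1(1-\sigma)\,\d\sigma = \frac{M_0}{2}=:M,
\]
which gives the first bound with $M$ depending only on $f$ and $\mathcal I$.

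For part \ref{eq:est_dif_G}, the key is a Lipschitz-type estimate on $G$ itself. I would write, for $s_1,s_2\in\mathcal I$,
\[
G(s_1)-G(s_2)=\int_0^1(1-\sigma)\bigl(f''(\sigma s_1)-f''(\sigma s_2)\bigr)\,\d\sigma,
\]
and bound $|f''(\sigma s_1)-f''(\sigma s_2)|\le \bigl(\sup_{|r|\le a}|f'''(r)|\bigr)\,\sigma\,|s_1-s_2|$ by the mean value theorem, giving $|G(s_1)-G(s_2)|\le M_1\,|s_1-s_2|$ for $M_1:=\tfrac12\sup_{|r|\le a}|f'''(r)|$ (the factor $\tfrac12$ or similar is immaterial). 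Then I would split the difference of the quadratic terms in the standard telescoping way:
\[
s_1^2 G(s_1)-s_2^2 G(s_2) = \bigl(s_1^2-s_2^2\bigr)G(s_1) + s_2^2\bigl(G(s_1)-G(s_2)\bigr).
\]
Applying part \ref{eq:est_sec} to the first term, $\lvert(s_1^2-s_2^2)G(s_1)\rvert\le M\,|s_1-s_2|\,(|s_1|+|s_2|)$, and applying the Lipschitz bound on $G$ to the second, $\lvert s_2^2(G(s_1)-G(s_2))\rvert\le M_1\,|s_2|^2\,|s_1-s_2|$, and summing yields exactly the claimed inequality.

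There is no real obstacle here; the only point requiring a little care is making sure all suprema are taken over the compact set $\mathcal I$ (or a fixed enclosing interval $[-a,a]$) so that the constants $M,M_1$ genuinely depend only on $f$ and $\mathcal I$ and not on the individual points $s_1,s_2$. The smoothness hypothesis \ref{fn} is used only through continuity of $f''$ and $f'''$; $C^2$ regularity would in fact suffice, but since the ambient paper assumes $f\in C^\infty$ there is nothing to optimize.
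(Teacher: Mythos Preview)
Your proof is correct and follows essentially the same route as the paper: the identical telescoping decomposition $s_1^2G(s_1)-s_2^2G(s_2)=(s_1^2-s_2^2)G(s_1)+s_2^2(G(s_1)-G(s_2))$, the bound on the first summand via part~\ref{eq:est_sec}, and the mean-value estimate on $f''(\sigma s_1)-f''(\sigma s_2)$ for the second. One small caveat on your closing remark: since the Lipschitz bound on $G$ relies on boundedness of $f'''$, $C^2$ regularity alone would \emph{not} suffice for part~\ref{eq:est_dif_G}; you need at least $f''$ Lipschitz (e.g.\ $f\in C^3$).
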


\begin{proof}
Point \ref{eq:est_sec} is direct with $M:=\max_{\tau\in \mathcal I}|f^{\prime\prime}(\tau)|$ since $f\in C^\infty(\mathbb R)$ and $0\in \mathcal I$. For \ref{eq:est_dif_G}, let $s_1,s_2\in \mathcal I$, then we can write
\begin{equation}\label{eq:rew}
s_1^2G(s_1)-s^2_2 G(s_2)=\left(s_1^2-s_2^2\right)G(s_1)+s_2^2\left(G(s_1)-G(s_2)\right)=:J_1+J_2.
\end{equation}
Using point \ref{eq:est_sec}, we can estimate $J_1$ as follows
\begin{equation}\label{eq:est_J1}
|J_1|=\left|\left(s_1^2-s_2^2\right)\right||G(s_1)|=\left|(s_1-s_2)(s_1+s_2)\right||G(s_1)| \leq M|s_1-s_2|\left(|s_1|+|s_2|\right).
\end{equation}
On the other hand, w.l.o.g assume that $s_1>s_2$. For any $\sigma\in[0,1]$, note that since $f\in C^\infty(\mathbb R)$, we have from mean value theorem that
 \begin{equation}\label{eq:est_int_J2}
|f^{\prime\prime}(\sigma s_1)-f^{\prime\prime}(\sigma s_2)|=\left|\int_{\sigma s_2}^{\sigma s_1}f^\prime(u)\d{u}\right|\leq \sigma \max_{\tau\in \mathcal I}|f^\prime(\tau)| |s_1-s_2| \leq M_1 |s_1-s_2|,
\end{equation}
where we have used that $[\sigma s_2,\sigma s_1]\subset \mathcal I$ since $0\in \mathcal I$ and where $M_1:=\max_{\tau\in \mathcal I}|f^\prime(\tau)|$. Thus, from \eqref{eq:est_int_J2}, we can estimate $J_2$ as 
\begin{align}\notag
|J_2|&=\left|s_2|^2\right|G(s-1)-G(s_2)|=\left|s_2\right|^2\left|\int_{0}^1(1-\sigma)\left[f^{\prime\prime}(\sigma s_1)-f^{\prime\prime}(\sigma s_2)\right]\d{\sigma}\right| \\ \label{eq:est_J2}
&\leq \left|s_2\right|^2 \int_{0}^{1}|1-\sigma| |f^{\prime\prime}(\sigma s_1)-f^{\prime\prime}(\sigma s_2)| \d{\sigma} \leq M_1 \left|s_2\right|^2 |s_1-s_2|.
\end{align}
Putting together \eqref{eq:rew}, \eqref{eq:est_J1} and \eqref{eq:est_J2} yields the desired result. 
\end{proof}

\section*{Acknowledgements}

The authors are grateful to the anonymous referees for their careful reading of the manuscript and for their valuable comments and suggestions, which helped improve the quality and clarity of the paper.

The authors would like to thank to LaSol (Solomon Lefschetz International Research Laboratory) for the financial support. Part of this work was done when the second author was visiting Instituto de Matemáticas, UNAM which provided excellent working conditions. The second author would like to thank Ying Hu for interesting discussions about this work.

\bibliographystyle{alpha}
\small{\bibliography{bibsnse}}

\bigskip

\begin{flushleft}
\textbf{Víctor Hernández-Santamaría}\\
Instituto de Matemáticas\\
Universidad Nacional Autónoma de México \\
Circuito Exterior, C.U.\\
04510, Coyoacán, CDMX, Mexico\\
\texttt{victor.santamaria@im.unam.mx}

\bigskip

\textbf{Kévin Le Balc'h}\\
Laboratoire Jacques-Louis Lions \\
Inria, Sorbonne Université\\
Université de Paris, CNRS \\
Paris, France\\
\texttt{kevin.le-balc-h@inria.fr}

\bigskip

\textbf{Liliana Peralta}\\
Departamento de Matem\'aticas, Facultad de Ciencias\\
Universidad Nacional Autónoma de México \\
Circuito Exterior, C.U.\\
04510, Coyoacán, CDMX, Mexico\\
\texttt{lylyaanaa@ciencias.unam.mx}

\end{flushleft}

\end{document}